\documentclass[10pt,reqno]{amsart}

\usepackage{a4wide}
\usepackage[T1]{fontenc}
\usepackage{lmodern}
\usepackage{amsmath,amssymb,amsthm,amscd,mathrsfs,mathtools,bm}
\usepackage{graphicx}
\usepackage{indentfirst}
\usepackage{enumitem}
\usepackage[numbers,sort&compress]{natbib}
\usepackage[dvipsnames]{xcolor}
\usepackage[colorlinks=true,linkcolor=blue,citecolor=blue,urlcolor=blue]{hyperref}
\usepackage{aliascnt}
\usepackage[nameinlink,capitalise]{cleveref}

\numberwithin{equation}{section}
\allowdisplaybreaks[2]
\theoremstyle{plain}
\newtheorem{theorem}{Theorem}[section]

\newaliascnt{lemma}{theorem}
\newtheorem{lemma}[lemma]{Lemma}
\aliascntresetthe{lemma}

\newaliascnt{corollary}{theorem}
\newtheorem{corollary}[corollary]{Corollary}
\aliascntresetthe{corollary}

\newaliascnt{proposition}{theorem}
\newtheorem{proposition}[proposition]{Proposition}
\aliascntresetthe{proposition}

\theoremstyle{definition}
\newaliascnt{definition}{theorem}
\newtheorem{definition}[definition]{Definition}
\aliascntresetthe{definition}

\theoremstyle{remark}
\newaliascnt{remark}{theorem}
\newtheorem{remark}[remark]{Remark}
\aliascntresetthe{remark}

\crefname{theorem}{Theorem}{Theorems}
\Crefname{theorem}{Theorem}{Theorems}
\crefname{lemma}{Lemma}{Lemmas}
\Crefname{lemma}{Lemma}{Lemmas}
\crefname{proposition}{Proposition}{Propositions}
\Crefname{proposition}{Proposition}{Propositions}
\crefname{corollary}{Corollary}{Corollaries}
\Crefname{corollary}{Corollary}{Corollaries}
\crefname{definition}{Definition}{Definitions}
\Crefname{definition}{Definition}{Definitions}
\crefname{remark}{Remark}{Remarks}
\Crefname{remark}{Remark}{Remarks}

\newcommand{\R}{\mathbb R}

\newcommand{\dd}{\,\mathrm d}

\newcommand{\dist}{\operatorname{dist}}
\newcommand{\tr}{\operatorname{tr}}
\newcommand{\II}{\mathrm{II}}

\title[Inverse spectral Green kernels and Robin nondegeneracy]
{Convexity of inverse spectral Green kernels and nondegeneracy of Robin centers in convex domains}
\author[X. Liang, W. Wang]{Xiuda Liang, Wenjie Wang}

\address[Xiuda Liang]{School of Mathematics and Statistics, Central China Normal University, Wuhan 430079, China}
\email{lxddd@mails.ccnu.edu.cn}

\address[Wenjie Wang]{School of Mathematics and Statistics, Central China Normal University, Wuhan 430079, China}
\email{wjwang3269@mails.ccnu.edu.cn}

\date{}

\begin{document}

\begin{abstract}
Let $\Omega\subset\R^N$ be a bounded convex domain and let
$A=-\Delta_D$ be the positive Dirichlet Laplacian.  For every real
$s>0$ with $N>2s$, we prove that the function
\[
 (x,y)\longmapsto K_{s,\Omega}(x,y)^{-1/(N-2s)},
\]
where $K_{s,\Omega}$ is the Green kernel of $A^{-s}$, extends continuously
by zero to the diagonal and is jointly convex on $\Omega\times\Omega$.  We also prove that the associated regular part extends
real analytically across the diagonal and that the corresponding Robin
function is strictly convex and diverges at the boundary.  Consequently,
for every real $s>0$ satisfying $N>2s$, there is a unique
Robin center.  This applies in particular to the spectral fractional
Dirichlet Laplacian and to all integer-order Navier polyharmonic operators.

If $\Omega$ is of class $C^{2,\vartheta}$, $0<\vartheta<1$, we further
establish second-order rigidity.  For $0<s<1$, a weighted
translation--curvature identity yields
$D^2R_{s,\Omega}>0$ throughout $\Omega$.  For integer orders, we introduce a
finite-part doubling principle across real spectral orders.  The argument is
based on a two-term expansion of truncated square energies together with the
spectral identity $A^{-\sigma}A^{-\sigma}=A^{-2\sigma}$.  It follows that, for every
$p\in\mathbb N$ with $N>2p$, the unique Navier polyharmonic Robin center is
nondegenerate; if $N>3p$, the Hessian is positive definite throughout the
domain.  The boundary regularity required by these second-order results is
independent of the polyharmonic order.
\end{abstract}

\maketitle

{\small
\noindent\textbf{Keywords:} inverse spectral powers, Green-kernel convexity, Robin function,
spectral fractional Laplacian, Navier polyharmonic operator, Robin nondegeneracy,
Dirichlet heat kernel, Borell--Brascamp--Lieb inequality.
\smallskip

\noindent\textbf{2020 Mathematics Subject Classification:}
35J40 $\cdot$ 35J08 $\cdot$ 31B05 $\cdot$ 26D10 $\cdot$ 52A40.
}

\section{Introduction and main results}
\setcounter{equation}{0}

The Robin function is a classical object in potential theory and plays a
central role in the analysis of singularly perturbed elliptic equations.  To
fix the sign convention used throughout the paper, let
$\Omega\subset\mathbb R^N$ be a bounded domain.  For $N\ge3$, if
$G_{1,\Omega}$ denotes the Green function of the Dirichlet Laplacian, we
write
\[
 G_{1,\Omega}(x,y)=\Phi_1(x-y)-H_{1,\Omega}(x,y),
 \qquad
 \Phi_1(z)=\frac{1}{(N-2)|\mathbb S^{N-1}|}|z|^{2-N},
\]
and define
\[
 R_{1,\Omega}(y):=H_{1,\Omega}(y,y).
\]
In dimension $N=2$ the analogous decomposition uses the logarithmic
fundamental solution $\Phi_1(z)=-(2\pi)^{-1}\log|z|$, with the same definition
of the Robin function.  Its critical points govern the location of
concentration points in a large class of singular perturbation problems; see,
among many others,
\cite{Rey1990,Han1991,BahriLiRey1995,BandleFlucher1996,
BartschPistoia2015,MichelettiPistoia2014,BartschMichelettiPistoia2019,
GladialiGrossiLuoYan2025}.  On convex domains the classical Robin function
has a particularly rigid geometry.  In dimension two,
Caffarelli--Friedman \cite{CaffarelliFriedman1985} proved its strict
convexity.  In dimensions $N\ge3$, Cardaliaguet--Tahraoui
\cite{CardaliaguetTahraoui} proved strict concavity of the harmonic radius,
which in turn yields strict convexity of the Robin function and uniqueness of
the harmonic center.  More recently, Li--Liu--Ma \cite{LiLiuMa} proved
global positive definiteness of the Hessian on smooth convex domains by means
of a translation--curvature identity.  Thus, in the classical case, both the
first-order geometry and the second-order rigidity of the Robin function are
well understood on convex domains.

The corresponding picture for higher-order operators is much less complete.
For $p\in\mathbb N$, the Navier polyharmonic problem is associated with the
iterated Dirichlet Laplacian $(-\Delta_D)^p$ and, for sufficiently regular
solutions, with the boundary conditions
\begin{equation}\label{eq:Navier-problem-intro}
 (-\Delta)^pu=f\quad\text{in }\Omega,
 \qquad
 (-\Delta)^ju=0\quad\text{on }\partial\Omega,
 \quad j=0,1,\ldots,p-1.
\end{equation}
The corresponding Green and Robin functions arise naturally in the blow-up
analysis of nearly critical biharmonic and polyharmonic equations; see
\cite{ChouGeng2000,Geng2005,Takahashi2008,TakahashiNondeg2008,
SatoTakahashi2009}.  In contrast with the Laplace case, neither uniqueness
of the Robin center nor its nondegeneracy was known on a general bounded
convex domain.  Takahashi \cite{Takahashi2013} derived integral identities
and nondegeneracy results in symmetric settings and explicitly pointed out
that the corresponding convex-domain problem was open for Navier
polyharmonic Robin functions.  To the best of our knowledge, no subsequent
general theorem had established uniqueness of the Navier polyharmonic Robin
center on an arbitrary bounded convex domain before the present work.

A parallel question concerns fractional powers of the Dirichlet Laplacian.
In the spectral setting, $(-\Delta_D)^s$, $0<s<1$, is defined by functional
calculus from the Dirichlet Laplacian, and its Green kernel is represented by
the Dirichlet heat semigroup.  This realization must be distinguished from
the restricted fractional Laplacian, for which the homogeneous condition is
imposed on $\mathbb R^N\setminus\Omega$: the two operators have different
Green kernels, boundary behavior and Robin functions.  Spectral fractional
Green and Robin functions enter the reduction of nearly critical problems;
see D\'avila--L\'opez R\'ios--Sire \cite{DavilaLopezRiosSire2017}.  More
recently, Ortega \cite{Ortega2024,Ortega2026} obtained nondegeneracy and
directional Hessian information at critical points selected by symmetry.
These results are local in nature and do not provide a global convex-domain
description of the spectral fractional Robin function.  Related identities
for the restricted fractional Laplacian have been studied in
\cite{DjitteSueur2026}.  The present paper deals exclusively with the
spectral realization.

Despite these developments, the global geometry of higher-order and
fractional Robin functions on general convex domains remains largely
unexplored.  Two basic questions are whether convexity of the domain alone
forces the existence of a unique Robin center and whether that center is
nondegenerate.  A further issue is whether the fractional and polyharmonic
problems can be treated within a common framework, rather than by unrelated
arguments at each order.  The purpose of this paper is to address these
questions for the continuous inverse-spectral family generated by the
Dirichlet Laplacian.

In this paper we treat the spectral fractional and Navier polyharmonic
problems as two distinguished parts of a single continuous inverse-spectral
family.  In order to state our results precisely, let
\[
 A:=-\Delta_D
\]
be the positive self-adjoint Dirichlet Laplacian on $L^2(\Omega)$, and let
$\{(\lambda_k,\phi_k)\}_{k\ge1}$ be an orthonormal basis of eigenfunctions,
with $0<\lambda_1\le\lambda_2\le\cdots$.  For every real $s>0$ we define
\[
 D(A^s):=\left\{u\in L^2(\Omega):
 \sum_{k=1}^\infty\lambda_k^{2s}|\langle u,\phi_k\rangle|^2<\infty\right\},
 \qquad
 A^su:=\sum_{k=1}^\infty
 \lambda_k^s\langle u,\phi_k\rangle\phi_k,
\]
and
\[
 A^{-s}f:=\sum_{k=1}^\infty
 \lambda_k^{-s}\langle f,\phi_k\rangle\phi_k.
\]
We denote by $K_{s,\Omega}$ the Schwartz kernel of $A^{-s}$.  Equivalently,
\begin{equation}\label{eq:Ks-mellin-definition-intro}
 K_{s,\Omega}(x,y)
 =\frac1{\Gamma(s)}\int_0^\infty
 t^{s-1}p_\Omega(t,x,y)\,\dd t,
 \qquad x\ne y,
\end{equation}
where $p_\Omega$ is the Dirichlet heat kernel.  Thus
\begin{equation}\label{eq:spectral-semigroup-intro}
 A^{-s}A^{-t}=A^{-(s+t)},
 \qquad s,t>0.
\end{equation}
If $0<s<1$, then $A^s=(-\Delta_D)^s$ is the spectral fractional Dirichlet
Laplacian.  Here the Dirichlet condition is encoded through the Dirichlet
eigenfunctions, equivalently through the Dirichlet heat semigroup.  No
exterior condition on $\mathbb R^N\setminus\Omega$ is part of this
definition, so this realization should not be confused with the restricted
fractional Laplacian.  If $s=p\in\mathbb N$, then $A^p$ is the iterated
Dirichlet Laplacian.  If $u\in D(A^p)$, the spectral definition implies
$A^ju\in D(A)\subset H_0^1(\Omega)$ for $j=0,\ldots,p-1$; hence each
$A^ju$ has zero Dirichlet trace.  On smooth domains, and whenever the
functions involved are sufficiently regular, these trace conditions are
precisely the classical Navier conditions in
\eqref{eq:Navier-problem-intro}.  For noninteger $s>1$ we use only the
neutral terminology \emph{spectral power} and do not identify $A^s$ with an
additional local boundary value problem.  This common spectral origin,
rather than a common pointwise boundary condition, is the basis of the
unification below.

Assume now $s>0$ and $N>2s$.  On $\mathbb R^N$, the inverse operator
$(-\Delta)^{-s}$ has the Riesz kernel
\begin{equation}\label{eq:real-fundamental-intro}
 \Phi_s(z):=a_{N,s}|z|^{2s-N},
 \qquad
 a_{N,s}:=
 \frac{\Gamma(\frac N2-s)}{4^s\pi^{N/2}\Gamma(s)}.
\end{equation}
For $x\ne y$ we define the regular part
\[
 H_{s,\Omega}(x,y):=\Phi_s(x-y)-K_{s,\Omega}(x,y).
\]
We prove below that $H_{s,\Omega}$ admits a unique real-analytic extension
across the diagonal.  With this extension understood, we make the following
definition.
\begin{definition}
\label{def:spectral-robin-real}
For $s>0$ with $N>2s$, define
\[
 R_{s,\Omega}(y):=H_{s,\Omega}(y,y).
\]
If $0<s<1$, this is the \emph{spectral fractional Robin function}.  If
$s=p\in\mathbb N$, then $K_{p,\Omega}$ is the Navier polyharmonic Green
kernel and $R_{p,\Omega}$ is the \emph{Navier polyharmonic Robin function}.
On a smooth domain, $K_{p,\Omega}(\cdot,y)$ is equivalently characterized by
\[
 (-\Delta)^pK_{p,\Omega}(\cdot,y)=\delta_y\quad\text{in }\Omega,
 \qquad
 (-\Delta)^jK_{p,\Omega}(\cdot,y)=0\quad\text{on }\partial\Omega,
 \quad j=0,\ldots,p-1.
\]
For noninteger $s>1$, $R_{s,\Omega}$ is called the inverse spectral Robin
function.
\end{definition}

Our first result concerns the Green kernels associated with arbitrary real
spectral orders.  It extends Borell's convexity theorem for the classical
Dirichlet Green function \cite{Borell1984,Borell1985} to every inverse
spectral power in the range $N>2s$.

\begin{theorem}
\label{thm:spectral-main}
Let $\Omega\subset\mathbb R^N$ be a bounded open convex set, let $s>0$, and
assume $N>2s$.  Define
\begin{equation}\label{eq:spectral-convex-transform}
 \Psi_{s,\Omega}(x,y):=
 \begin{cases}
 K_{s,\Omega}(x,y)^{-1/(N-2s)},&x\ne y,\\[1mm]
 0,&x=y.
 \end{cases}
\end{equation}
Then $\Psi_{s,\Omega}$ is continuous and jointly convex on
$\Omega\times\Omega$.
\end{theorem}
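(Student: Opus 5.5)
We follow Borell's strategy for the classical Green function and work entirely through the heat-kernel representation \eqref{eq:Ks-mellin-definition-intro}. The first input is the log-concavity of the Dirichlet heat kernel on convex domains. For each fixed $t>0$, the map $(x,y)\mapsto p_\Omega(t,x,y)$ is log-concave on $\Omega\times\Omega$. Moreover, the combined map
\[
 (t,x,y)\longmapsto t^{N/2}p_\Omega(t,x,y)
\]
has a suitable joint concavity structure in $(t,x,y)$. This should come from Borell's space--time result, obtained by a Brownian-motion or Trotter--Kato product argument. The free Gaussian kernel $(4\pi t)^{-N/2}e^{-|x-y|^2/4t}$ has the property that $|x-y|^2/t$ is jointly convex in $(t,x-y)$, and the indicator of the convex set $\Omega$ is log-concave. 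Iterated Prékopa--Leindler then preserves the following structure: $F(t,x,y):=p_\Omega(t,x,y)$ satisfies
\[
 F\bigl(t_\theta,x_\theta,y_\theta\bigr)\ \ge\ \text{(appropriate BBL-type mean of } F(t_0,x_0,y_0),F(t_1,x_1,y_1)),
\]
with $t$ itself combined as a power mean. In practice I would first change variables $t=r^2$ or $t=\tau^{-1}$, so that the relevant inequality becomes $\gamma$-concavity in the Borell--Brascamp--Lieb sense.

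The second step integrates out $t$. Write $K_{s,\Omega}(x,y)=\Gamma(s)^{-1}\int_0^\infty t^{s-1}p_\Omega(t,x,y)\,\dd t$. We apply the Borell--Brascamp--Lieb inequality in the single variable $t$, treating $(x,y)$ as parameters along a segment $(x_\theta,y_\theta)$. The integrand $g_\theta(t)=t^{s-1}p_\Omega(t,x_\theta,y_\theta)$ satisfies a $\gamma$-concave mean inequality with respect to $g_0$ and $g_1$. BBL in dimension one then gives that $\theta\mapsto\int g_\theta$ is $\gamma/(1+\gamma)$-concave. The exponents must be tracked so that the final concavity index is exactly $-1/(N-2s)$. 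A scaling check on $\R^N$ confirms the target: $\Phi_s(x-y)^{-1/(N-2s)}=c|x-y|$ is convex, and the homogeneity $p(\lambda^2t,\lambda x,\lambda y)=\lambda^{-N}p$ yields degree $2s-N$. So the index must be $-1/(N-2s)$, and the computation should reproduce it provided the space--time concavity of $p_\Omega$ has the correct homogeneous exponent.

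This $-1/(N-2s)$-concavity of $K_{s,\Omega}$ says precisely that $K^{-1/(N-2s)}$ is convex off the diagonal. Segments through the diagonal are handled by the continuous extension by zero: convexity holds on $\{x\ne y\}$, and the function is nonnegative and vanishes on the diagonal.

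Continuity follows from two facts. First, $K_{s,\Omega}$ is finite and continuous off the diagonal, by Gaussian upper bounds for $p_\Omega$ and $t^{s-1}$ integrability, using $e^{-\lambda_1 t}$ decay for large $t$. Second, $K_{s,\Omega}(x,y)\ge c|x-y|^{2s-N}$ locally near the diagonal, since $p_\Omega\ge p_{B}$ on small balls. Hence $K\to\infty$ as $x-y\to0$, and $\Psi\to0$ there.

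The main obstacle is the first step: establishing the precise space--time concavity of $p_\Omega$ with the right exponent, so that the one-dimensional BBL integration in $t$ produces exactly the index $-1/(N-2s)$ for all real $s$ with $N>2s$, and not merely log-concavity. I would try to prove it via the product formula
\[
 p_\Omega=\lim_{n\to\infty}\bigl(\mathbf 1_\Omega\,e^{t\Delta/n}\bigr)^n,
\]
verifying at each stage joint convexity in $(\sqrt t,x,y)$ of the quadratic phase $|x-y|^2/t$ after the substitution $t=r^2$, and then passing to the limit.
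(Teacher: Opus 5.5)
There is a genuine gap, and it is exactly the step you call the ``main obstacle''. The space--time concavity of $p_\Omega$ is the heart of the theorem, and the routes you sketch for it do not work as stated.

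First, the one-dimensional BBL step cannot be run in the variable $t$ with integrand $t^{s-1}p_\Omega(t,x,y)$. Put $m=N-2s$; landing on $-1/m$ requires $\gamma=-1/(m+1)$. Already for the free kernel, $\bigl(t^{s-1}g_t(w)\bigr)^{\gamma}=c\,t^{(m+2)/(2(m+1))}e^{|w|^2/(4(m+1)t)}$. Its power of $t$ lies in $(\frac12,1)$, so it is not convex in $t$ once $t\gg|w|^2$, and the pointwise BBL hypothesis fails. It fails for $p_\Omega$ too when $|x-y|^2\ll t\ll \dist(x,\partial\Omega)^2$, since there $p_\Omega$ agrees with $g_t$ up to $O(e^{-c/t})$. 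You must integrate in $r=\sqrt t$ with weight $r^{2s-1}$, using $K_{s,\Omega}=\frac{2}{\Gamma(s)}\int_0^\infty r^{2s-1}p_\Omega(r^2,x,y)\,\dd r$.

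Second, your candidate structure is false. The function $t^{N/2}p_\Omega$ is not log-concave in $(t,x,y)$: on $\Omega=(0,\pi)$ with $x=y=\epsilon$ and $\epsilon^2\ll t\ll1$, one has $t^{1/2}p_\Omega\approx c\,\epsilon^2/t$, whose logarithm is convex in $t$. Also, $|x-y|^2/r^2$ is not jointly convex in $(r,x,y)$. The product formula does not repair this. If the total time differs at the two endpoints, each factor $g_h$ carries the log-convex prefactor $h^{-N/2}$. Pr\'ekopa--Leindler on $\R^{N(n-1)}$ then only gives log-concavity of $t^{nN/2}K_{\Omega,n}(t;x,y)$ in $(t,x,y)$. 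That weight blows up with $n$, so nothing survives the limit.

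The missing idea is to trade variation in time for variation of the domain, in four steps.
\begin{enumerate}
\item Parabolic scaling gives $p_\Omega(r^2,x,y)=r^{-N}p_{\Omega/r}(1,x/r,y/r)$.
\item At the fixed time $1$ one proves a Minkowski-domain log-concavity for convex $D_0,D_1$: $p_{(1-\theta)D_0+\theta D_1}(1,x_\theta,y_\theta)\ge p_{D_0}(1,x_0,y_0)^{1-\theta}p_{D_1}(1,x_1,y_1)^{\theta}$. This follows from Pr\'ekopa--Leindler on the finite-step kernels, where the time step is now the same on both sides, followed by a dyadic limit.
\item For convex $\Omega$ one has the exact identity $\alpha_0\Omega/r_0+\alpha_1\Omega/r_1=\Omega/r_\theta$, with $\alpha_0=(1-\theta)r_0/r_\theta$ and $\alpha_1=\theta r_1/r_\theta$. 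The same weights interpolate $x/r$ and $y/r$. Weighted AM--GM then shows that $r^{1-\beta}p_\Omega(r^2,x,y)^{-\beta/N}$ is jointly convex on $\Omega\times\Omega\times(0,\infty)$ for every $\beta>0$.
\item Taking $\beta=N/(N-2s+1)$ says exactly that $r^{2s-1}p_\Omega(r^2,x,y)$ is $q_s$-concave with $q_s=-1/(N-2s+1)\in(-1,0)$. One-dimensional BBL in $r$ then yields the exponent $q_s/(1+q_s)=-1/(N-2s)$.
\end{enumerate}

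Your continuity argument is fine. For the diagonal, spell out what you use. BBL gives the three-point inequality whenever both endpoints and the interpolated pair are off the diagonal, even if the segment crosses it elsewhere. Then treat two cases explicitly. If the interpolated pair is diagonal, the left side is $0$. If an endpoint is diagonal, approximate it by off-diagonal pairs and pass to the limit using continuity of $\Psi_{s,\Omega}$.
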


The proof follows the probabilistic convexity approach underlying the
classical result, but the passage from the heat kernel to an arbitrary
spectral order requires an additional scale-covariant step.  More precisely,
we combine the Pr\'ekopa--Leindler structure of the Dirichlet heat kernel,
parabolic scaling and a one-dimensional Borell--Brascamp--Lieb inequality.
The resulting Borell--Brascamp--Lieb exponent is $-1/(N-2s)$; in particular,
the same convexity law holds for every Navier polyharmonic Green kernel in
the range $N>2p$.

We next derive the geometry of the Robin function from the translated
diagonal of the convex Green-kernel transform.

\begin{theorem}
\label{thm:robin-real-main}
Let $s>0$, assume $N>2s$, and let
$\Omega\subset\mathbb R^N$ be a bounded open convex set.  Then
$H_{s,\Omega}$ extends uniquely to a real-analytic function on
$\Omega\times\Omega$, and $R_{s,\Omega}$ is strictly convex on $\Omega$.
Moreover,
\begin{equation}\label{eq:robin-real-bound-intro}
 R_{s,\Omega}(y)
 \ge a_{N,s}[2\,\dist(y,\partial\Omega)]^{2s-N},
 \qquad y\in\Omega.
\end{equation}
Consequently,
\[
 R_{s,\Omega}(y)\longrightarrow+\infty
 \qquad\text{as }y\to\partial\Omega,
\]
and there exists a unique point $y_{s,\Omega}\in\Omega$ such that
$\nabla R_{s,\Omega}(y_{s,\Omega})=0$.  This point is the unique global
minimizer of $R_{s,\Omega}$.
\end{theorem}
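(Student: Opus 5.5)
We plan to establish the three assertions of \cref{thm:robin-real-main} in order: analytic extension of $H_{s,\Omega}$, strict convexity of $R_{s,\Omega}$, and then the boundary lower bound, from which the existence and uniqueness of the Robin center follow.

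\textbf{Analytic extension.} Starting from \eqref{eq:Ks-mellin-definition-intro}, we split the Dirichlet heat kernel as $p_\Omega=p_{\R^N}-q_\Omega$, where $p_{\R^N}(t,z)=(4\pi t)^{-N/2}e^{-|z|^2/4t}$. The free part integrates exactly to $\Phi_s(x-y)$ because $N>2s$. Hence
\[
 H_{s,\Omega}(x,y)=\frac1{\Gamma(s)}\int_0^\infty t^{s-1}q_\Omega(t,x,y)\dd t .
\]
Here $q_\Omega(t,\cdot,\cdot)$ is caloric in each variable and is bounded by $C e^{-c\,\delta^2/t}$ locally near the diagonal for small $t$, where $\delta$ is the distance to $\partial\Omega$; this follows from the maximum principle applied to $p_{\R^N}-p_\Omega$. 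For large $t$ we use $|q_\Omega|\le p_{\R^N}+Ce^{-\lambda_1 t}$, and the free part is integrable at infinity because $N>2s$. Analyticity in $(x,y)$ is then proved as follows. On a compact subset of $\Omega\times\Omega$, the function $(x,y)\mapsto q_\Omega(t,x,y)$ solves the heat equation with Gaussian-small data near $t=0$. Standard interior analytic estimates, of Cauchy type with constants $C^{|\alpha|}\alpha!$ uniform for $t\in(0,\infty)$ after the $t$-weights are accounted for, then pass under the integral. Uniqueness of the extension is immediate because the off-diagonal set is dense.

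\textbf{Strict convexity.} We use \cref{thm:spectral-main}. Fix $z\neq0$ small and consider $y\mapsto \Psi_{s,\Omega}(y+z,y)$ on the convex set $\Omega\cap(\Omega-z)$. This is the restriction of a jointly convex function to an affine slice, so it is convex. Writing $K=\Phi_s(z)-H$ and $\Phi_s(z)=a_{N,s}|z|^{2s-N}$, we expand
\[
 \Psi_{s,\Omega}(y+z,y)=a_{N,s}^{-1/(N-2s)}|z|\Bigl(1+\tfrac{1}{N-2s}\,a_{N,s}^{-1}|z|^{N-2s}H(y+z,y)+O(|z|^{2(N-2s)})\Bigr),
\]
uniformly on compact sets. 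Dividing by $|z|^{N-2s+1}$ and subtracting the constant term, we find that
\[
 \frac{\Psi(y+z,y)-c|z|}{c'|z|^{N-2s+1}}\to R_{s,\Omega}(y)
\]
locally uniformly as $z\to0$ along a fixed direction. The quotient is convex in $y$ for each fixed $z$, so $R_{s,\Omega}$ is convex as a limit of convex functions.

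Strictness follows by analyticity. If $R$ were affine on some segment, it would be affine on the whole line through that segment intersected with $\Omega$, because an analytic function of one variable that is affine on an interval is affine on its entire connected domain. This contradicts $R\to+\infty$ at both boundary endpoints, which we prove next. A convex function is either strictly convex or affine on some segment, so $R_{s,\Omega}$ is strictly convex.

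\textbf{Boundary bound.} We use domain monotonicity. If $B=B(y,d)\subset\Omega$ with $d=\dist(y,\partial\Omega)$, then $p_B\le p_\Omega$, so $K_{s,B}\le K_{s,\Omega}$ and therefore $R_{s,\Omega}(y)\le R_{s,B}(y)$. This inequality points the wrong way. Instead we argue by comparison with a half-space. Choose $\xi\in\partial\Omega$ with $|y-\xi|=d$, and let $P$ be a supporting half-space at $\xi$ containing $\Omega$, which exists by convexity. Then $p_\Omega\le p_P$, and the method of images gives
\[
 p_P(t,x,y)=p_{\R^N}(t,x-y)-p_{\R^N}(t,x-y^*),
\]
where $y^*$ is the reflection of $y$ across $\partial P$. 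Since $|y-y^*|=2d$, integrating in $t$ yields $H_{s,\Omega}(y,y)\ge H_{s,P}(y,y)=\Phi_s(2d)=a_{N,s}(2d)^{2s-N}$, which is \eqref{eq:robin-real-bound-intro}. Combining this bound with strict convexity gives blow-up at $\partial\Omega$, existence of a minimizer, and uniqueness of the critical point, since a critical point of a strictly convex function is its unique global minimizer.

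The main obstacle we expect is the uniform analytic estimate for $q_\Omega$ near the diagonal, which is needed both for the extension and for the locally uniform expansion used in the convexity step. If real analyticity turns out to be hard to obtain uniformly in $t$, we will fall back on $C^\infty$ bounds to get convexity, and obtain strictness by a separate argument, for example from Hessian identities.
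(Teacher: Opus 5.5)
Your convexity, boundary-bound, strict-convexity and minimizer steps are the paper's own argument in \cref{prop:robin-transfer}. There, too, the translated diagonal $y\mapsto\Psi_{s,\Omega}(y+h,y)$ is renormalized by $|h|^{N-2s+1}$, the lower bound comes from reflection across a supporting hyperplane, and strictness comes from the identity theorem on a chord plus boundary blow-up. Your claim $|y-y^*|=2d$ is exact, because $B(y,d)\subset\Omega\subset P$; the paper only uses $\dist(y,\partial P)\le d$.

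The genuine difference is the proof of real analyticity. The paper (\cref{lem:real-order-analytic}) builds the factorial bounds by hand:
\begin{itemize}
\item it differentiates the Dynkin--Hunt formula at one endpoint;
\item it obtains mixed derivatives from a semigroup splitting at $t/2$, producing terms like $a!\,b^{b/2}t^{-b/2}e^{-c/t}$;
\item it integrates these with \cref{lem:fractional-gamma-factorial}, which turns $m^{m/2}\int_0^1 t^{s-1-m/2}e^{-c/t}\dd t$ into $C^{m+1}m!$;
\item it treats $t\ge1$ by an $L^2$ factorization through $e^{-(t-1)A}$ and the spectral gap.
\end{itemize}
You instead quote interior analytic estimates for caloric functions. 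That is legitimate and shorter, but the phrase ``after the $t$-weights are accounted for'' hides the one delicate point. If you use parabolic cylinders of radius $\sqrt t$ near $t=0$, you get $|\gamma|!\,t^{-|\gamma|/2}e^{-c/t}$. Integrating against $t^{s-1}$ then costs an extra factor of order $\Gamma(|\gamma|/2)$, which gives only Gevrey-$\tfrac32$ bounds, not analyticity.

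Here is how to make your route work.
\begin{itemize}
\item Use the joint equation $\partial_t q_\Omega=\tfrac12(\Delta_x+\Delta_y)q_\Omega$, a heat equation in $2N$ variables. This is also what gives joint rather than merely separate analyticity in $(x,y)$.
\item Use the bound $0\le q_\Omega\le Ce^{-c/t}$ on all of $K_1\times K_1$, not only near the diagonal. The Dynkin--Hunt or maximum-principle bound holds for every $x$ once $y\in K_1$, and you need it off the diagonal too, since the theorem asserts analyticity on all of $\Omega\times\Omega$.
\item Extend $q_\Omega$ by zero to $t\le0$. By the previous bound this extension is a continuous weak, hence classical, solution of the $2N$-dimensional heat equation there.
\item Apply the analytic estimate on backward cylinders of a fixed radius $r$. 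This gives $\sup_{K\times K}|D^\gamma_{x,y}q_\Omega(t)|\le C^{|\gamma|+1}|\gamma|!\,r^{-|\gamma|}e^{-c/t}$ for $t\le1$, and $\le C^{|\gamma|+1}|\gamma|!\,(t^{-N/2}+e^{-\lambda_1 t})$ for $t\ge1$. Both are integrable against $t^{s-1}$ because $N>2s$.
\end{itemize}
With this, your route is a valid, more local alternative. The paper's route buys self-containedness: everything reduces to explicit Hermite and Gamma-function bounds. Through \cref{lem:pointwise-hunt} it also avoids invoking the maximum principle near the singular initial point $(0,y)$, at the price of considerably more bookkeeping.
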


\cref{thm:robin-real-main} gives, in a single statement, the global
first-order geometry for all real spectral orders.  The argument inherits
the convexity mechanism of Borell at the level of the Green kernel, while an
additional translated-diagonal renormalization identifies the Robin function
as the first nonconstant finite part of that convex transform.  No boundary
smoothness is required.  In particular we obtain the following two PDE
consequences.

\begin{corollary}
\label{cor:fractional-robin-center}
Let $0<s<1$, $N>2s$, and let $\Omega\subset\mathbb R^N$ be a bounded open
convex set.  Then the spectral fractional Robin function is real analytic and
strictly convex, diverges to $+\infty$ at the boundary, and has a unique
critical point, which is its unique global minimizer.
\end{corollary}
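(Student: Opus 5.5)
This corollary is the specialization of \cref{thm:robin-real-main} to $0<s<1$, so the plan is to verify its hypotheses and read off each conclusion. For $0<s<1$ the condition $N>2s$ holds in every dimension $N\ge1$. We take $\Omega$ bounded, open and convex, and no boundary regularity is required. By the discussion preceding \cref{def:spectral-robin-real}, $A^s=(-\Delta_D)^s$ is then the spectral fractional Dirichlet Laplacian. Its inverse has kernel $K_{s,\Omega}$ given by \eqref{eq:Ks-mellin-definition-intro}, and $R_{s,\Omega}$ is by definition the spectral fractional Robin function. Hence \cref{thm:robin-real-main} applies verbatim.

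The conclusions then follow in turn.
\begin{enumerate}[label=(\roman*)]
\item \emph{Real analyticity.} $H_{s,\Omega}$ extends real analytically to $\Omega\times\Omega$. Since $y\mapsto(y,y)$ is a linear embedding, the restriction $R_{s,\Omega}(y)=H_{s,\Omega}(y,y)$ is real analytic on $\Omega$.
\item \emph{Strict convexity.} This is stated directly in \cref{thm:robin-real-main}.
\item \emph{Boundary blow-up.} Since $2s-N<0$, the lower bound \eqref{eq:robin-real-bound-intro} gives $R_{s,\Omega}(y)\ge a_{N,s}[2\dist(y,\partial\Omega)]^{2s-N}\to+\infty$ as $\dist(y,\partial\Omega)\to0$. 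Here one must check that $a_{N,s}>0$: this holds because $\Gamma(\tfrac N2-s)>0$ and $\Gamma(s)>0$.
\item \emph{Unique critical point and global minimizer.} Blow-up at the boundary makes the sublevel sets of $R_{s,\Omega}$ compact in $\Omega$, so a minimizer exists. It is an interior critical point. Strict convexity of a differentiable function implies that any critical point is the unique global minimizer, so there is exactly one critical point.
\end{enumerate}

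There is no genuine obstacle. The only points needing care are that the spectral realization used here is the one in \cref{def:spectral-robin-real}, not the restricted fractional Laplacian, and that the constant $a_{N,s}$ is positive for $0<s<1$ in every dimension, including $N=1$ and $N=2$.
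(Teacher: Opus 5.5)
Your proposal is correct and follows the paper's proof, which likewise obtains the corollary by specializing \cref{thm:robin-real-main} to $0<s<1$, with analyticity of $R_{s,\Omega}$ coming from composing the real-analytic extension of $H_{s,\Omega}$ with the diagonal embedding. One small slip: $N>2s$ is not automatic for $N=1$ when $s\ge\frac12$, but it is a hypothesis of the corollary, so the argument is unaffected.
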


\begin{corollary}
\label{cor:navier-robin-center}
Let $p\in\mathbb N$, $N>2p$, and let $\Omega\subset\mathbb R^N$ be a bounded
open convex set.  Then the Navier polyharmonic Robin function is real
analytic and strictly convex, diverges to $+\infty$ at the boundary, and has
a unique critical point $y_{p,\Omega}$, which is its unique global minimizer.
\end{corollary}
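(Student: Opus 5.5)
The corollary is the special case $s=p\in\mathbb N$ of \cref{thm:robin-real-main}. The only work is to check that the objects in that theorem are the Navier polyharmonic objects. Fix $p\in\mathbb N$ with $N>2p$ and a bounded open convex $\Omega$.

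First, identify $K_{p,\Omega}$ with the Navier Green kernel. By the spectral semigroup law \eqref{eq:spectral-semigroup-intro}, $A^{-p}=(A^{-1})^p$. Hence $K_{p,\Omega}$ is the $p$-fold composition of the Dirichlet Green kernel $G_{1,\Omega}=K_{1,\Omega}$. For $f\in L^2(\Omega)$, the function $u=A^{-p}f$ lies in $D(A^p)$. As noted after the definition of $A^s$, each $A^ju$, $j=0,\dots,p-1$, then lies in $D(A)\subset H_0^1(\Omega)$, and $A^pu=f$. This is precisely the weak form of the Navier problem \eqref{eq:Navier-problem-intro}. So $K_{p,\Omega}$ is the Navier polyharmonic Green kernel, and $R_{p,\Omega}$ in \cref{def:spectral-robin-real} is the Navier polyharmonic Robin function. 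No smoothness of $\partial\Omega$ is needed for this identification: it is a matter of definition once the Navier problem is read in the spectral sense.

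Next, apply \cref{thm:robin-real-main} with $s=p$; its hypotheses are exactly $s>0$, $N>2s$ and convexity of $\Omega$. It gives the following:
\begin{itemize}[label=$\circ$]
\item $H_{p,\Omega}$ extends real analytically to $\Omega\times\Omega$. Since $y\mapsto(y,y)$ is real analytic, $R_{p,\Omega}(y)=H_{p,\Omega}(y,y)$ is real analytic.
\item $R_{p,\Omega}$ is strictly convex.
\item The bound \eqref{eq:robin-real-bound-intro} holds with exponent $2p-N<0$, which forces $R_{p,\Omega}\to+\infty$ at $\partial\Omega$.
\item $R_{p,\Omega}$ has a unique critical point $y_{p,\Omega}$, which is its unique global minimizer.
\end{itemize}

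There is no genuine obstacle, since all the analytic content sits in \cref{thm:robin-real-main}. The one point needing care is the first step: the corollary must be understood with the spectral meaning of the Navier boundary conditions, which coincides with the classical one on smooth domains.
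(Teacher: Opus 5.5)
Your proposal is correct and matches the paper's proof: the corollary is \cref{thm:robin-real-main} specialized to $s=p$. The identification of $K_{p,\Omega}$ as the Navier Green kernel comes from $A^ju\in D(A)\subset H_0^1(\Omega)$ for $u\in D(A^p)$, exactly as recorded in \cref{rem:integer-navier-interpretation}.
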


We next address second-order information.  In the spectral fractional range,
our result may be viewed as a weighted counterpart of the classical
translation--curvature approach, but the weighted extension problem requires a different implementation.

\begin{theorem}
\label{thm:fractional-hessian-intro}
Let $0<s<1$, $N\ge2$, $N>2s$, $0<\vartheta<1$, and let
$\Omega\subset\mathbb R^N$ be a bounded convex domain of class
$C^{2,\vartheta}$.  Then
\[
 \boxed{D^2R_{s,\Omega}(y)>0
 \qquad\text{for every }y\in\Omega.}
\]
In particular, the unique spectral fractional Robin center is nondegenerate
and has Morse index zero.
\end{theorem}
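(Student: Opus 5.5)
We plan to follow the translation--curvature strategy of Li--Liu--Ma, using the Caffarelli--Silvestre/Stinga--Torrea extension adapted to the spectral setting, where it is a degenerate elliptic problem on the cylinder $\mathcal C=\Omega\times(0,\infty)$. For fixed $y\in\Omega$ let $U_y(x,t)$ be the extension of $K_{s,\Omega}(\cdot,y)$. It solves $\operatorname{div}(t^{1-2s}\nabla U_y)=0$ in $\mathcal C$, with $U_y=0$ on $\partial\Omega\times(0,\infty)$, and its weighted conormal derivative $-\lim t^{1-2s}\partial_tU_y$ equals $c_s\delta_y$ on the base. The whole-space Riesz kernel $\Phi_s(\cdot-y)$ has an explicit extension $\Gamma_y$, the Poisson transform of $\Phi_s$, which is a constant multiple of $(|x-y|^2+t^2)^{(2s-N)/2}$. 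Hence the regular part $\mathcal H_y:=\Gamma_y-U_y$ is a weighted-harmonic function with zero conormal data on the base. Its trace at $t=0$ is $H_{s,\Omega}(\cdot,y)$, and on the lateral boundary it equals $\Gamma_y$. By \cref{thm:robin-real-main}, $R_{s,\Omega}$ is real analytic and strictly convex, so $D^2R_{s,\Omega}\ge0$. It therefore suffices to exclude a null direction: we assume $D^2R_{s,\Omega}(y_0)\xi=0$ for some unit vector $\xi$ and derive a contradiction.

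\textbf{Step 1: a weighted Rellich--Pohozaev identity.} We take a constant vector field $e$ and test the equation for $U_y$ against $e\cdot\nabla_xU_y$ with weight $t^{1-2s}$. Since $e$ is horizontal, the only boundary contributions come from the lateral boundary $\partial\Omega\times(0,\infty)$ and from a small half-ball around $(y,0)$. On $\partial\Omega$ we have $U_y=0$, so $\nabla_xU_y=\partial_\nu U_y\,\nu$. The singular half-ball contribution yields $\partial_eR_{s,\Omega}(y)$ after the standard expansion of $U_y=\Gamma_y-\mathcal H_y$. This gives
\[
\partial_eR_{s,\Omega}(y)=c\int_0^\infty\!\!\int_{\partial\Omega}t^{1-2s}(\partial_\nu U_y)^2(e\cdot\nu)\,d\sigma\,dt .
\]
Differentiating once more in $y$ in direction $e$, and then polarizing, produces a formula for $D^2R_{s,\Omega}(y)[e,e]$ as a boundary integral of the translation derivatives $w=\partial_{y_e}U_y+\partial_{x_e}U_y$. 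The key observation is that $w$ is regular at the pole, since the translation-invariant singular part cancels, and that $w=\partial_{x_e}U_y$ on the lateral boundary.

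\textbf{Step 2: the curvature term and positivity.} Following Li--Liu--Ma, we rewrite the Hessian as
\[
D^2R[e,e]=c\int_{\mathcal C}t^{1-2s}|\nabla w|^2+c\int_0^\infty\!\!\int_{\partial\Omega}t^{1-2s}\,\II(e^T,e^T)(\partial_\nu U_y)^2\,d\sigma\,dt .
\]
Here $\II\ge0$ is the second fundamental form, which is nonnegative by convexity, and $e^T$ is the tangential part of $e$. The computation uses $U_y=0$ on the lateral boundary to convert tangential derivatives into curvature terms, and the $C^{2,\vartheta}$ regularity of $\partial\Omega$ is what makes $\II$ and $\partial_\nu U_y$ continuous up to the boundary. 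If $D^2R[\xi,\xi]=0$, both terms vanish. The first gives that $w$ is constant, and hence $w\equiv0$ by decay as $t\to\infty$. The second gives $\partial_\nu U_y=0$ wherever $\II(\xi^T,\xi^T)>0$. A bounded convex $C^2$ domain has boundary points of strictly positive curvature, for instance the farthest point from the origin, and the set of such points is open. Hopf's lemma for $U_y>0$, applied to the weighted operator at lateral points with $t>0$ where the equation is uniformly elliptic, forces $\partial_\nu U_y<0$ there. This is a contradiction.

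\textbf{Main obstacle.} The hard part is justifying Step 1 and the integration by parts of Step 2 on an unbounded cylinder with a degenerate weight. Three issues arise. (a) The singular behavior at $(y,0)$ must be handled, which requires the expansion $U_y=\Gamma_y-\mathcal H_y$ with $\mathcal H_y$ smooth up to $t=0$ in the weighted sense; we would derive this from the real analyticity in \cref{thm:robin-real-main} together with the even-reflection regularity theory for $t^{1-2s}$-weights. (b) Corner regularity along $\partial\Omega\times\{0\}$ is needed to ensure that no boundary terms appear there. We would obtain it from the heat-kernel representation, using $p_\Omega$ and its $C^{2,\vartheta}$ boundary estimates so that $t^{1-2s}(\partial_\nu U_y)^2$ is integrable near $t=0$. (c) Exponential decay of $U_y$ as $t\to\infty$ follows from $\lambda_1>0$. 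We expect (b) to be the genuinely delicate point, and we would first try to control it via the subordination formula \eqref{eq:Ks-mellin-definition-intro}.
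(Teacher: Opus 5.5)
This is essentially the paper's proof. The ingredients match: the Stinga--Torrea extension, the lateral first-variation formula, the simultaneous-translation field $w$ (the paper's $\mathcal U_{\eta,y}$, equal to $-(e\cdot\nu)\mathcal P_s$ on $\partial\Omega\times(0,\infty)$), a Hessian identity of energy-plus-curvature type, and Hopf's lemma at lateral points with $t>0$. The one difference is that the paper derives the first variation from Green's second identity between $w$ and $U_y$, using $-\kappa_s t^{1-2s}\partial_tU_y(\cdot,t)\,\dd x\rightharpoonup\delta_y$, rather than from a Pohozaev identity on an excised half-ball; this avoids any expansion of $U_y$ near the pole $(y,0)$.

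One correction: the lateral computation also needs the normal--normal Hessian $D_x^2U_y[\nu,\nu]=-H_{\partial\Omega}\,\partial_\nu U_y$, which comes from the extension equation on $\partial\Omega\times(0,\infty)$. The boundary weight in your Step~2 identity is therefore $\II(e^T,e^T)+H_{\partial\Omega}(\nu\cdot e)^2$, not $\II(e^T,e^T)$ alone. The extra term is nonnegative on convex domains, so your positivity argument is unaffected.
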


At the structural level, the preceding theorem is a weighted
spectral-fractional counterpart of the recent classical
translation--curvature formula of Li--Liu--Ma \cite{LiLiuMa}.  The
simultaneous-translation mechanism is inherited from the classical setting,
whereas the use of the Stinga--Torrea extension, the treatment of the
degenerate weight and the control of the translation field near the boundary $z=0$ are specific to the fractional setting.  The resulting identity retains
the same geometric separation into an interior energy and a boundary
curvature contribution, while providing the global positivity needed below.

Our final result concerns the Navier polyharmonic problem at arbitrary
integer order.

\begin{theorem}
\label{thm:all-integer-intro}
Let $p\in\mathbb N$, $N>2p$, $0<\vartheta<1$, and let
$\Omega\subset\mathbb R^N$ be a bounded convex domain of class
$C^{2,\vartheta}$.  Let $y_{p,\Omega}$ be the unique Navier polyharmonic
Robin center supplied by \cref{cor:navier-robin-center}.  Then
\begin{equation}\label{eq:integer-center-positive}
 \boxed{D^2R_{p,\Omega}(y_{p,\Omega})>0.}
\end{equation}
Thus the Robin center is nondegenerate and has Morse index zero in the
natural dimensional range $N>2p$.  If, in addition, $N>3p$, then
\begin{equation}\label{eq:integer-global-positive}
 \boxed{D^2R_{p,\Omega}(y)>0
 \qquad\text{for every }y\in\Omega.}
\end{equation}
The threshold $N>3p$ arises from the global transfer argument developed here
and is not claimed to be optimal.
\end{theorem}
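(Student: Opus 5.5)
We treat \cref{thm:all-integer-intro} by transferring second-order information from the spectral fractional range, where \cref{thm:fractional-hessian-intro} already gives $D^2R_{\sigma,\Omega}>0$, to integer orders via the semigroup identity \eqref{eq:spectral-semigroup-intro}. The idea is a \emph{finite-part doubling principle}: for $\sigma>0$ with $N>4\sigma$ (or a suitable range), write $K_{2\sigma,\Omega}(y,y')=\int_\Omega K_{\sigma,\Omega}(y,x)K_{\sigma,\Omega}(x,y')\dd x$. Along the diagonal this integral diverges, so we replace it by a truncated square energy $E_{\sigma,\varepsilon}(y):=\int_{\Omega\setminus B_\varepsilon(y)}K_{\sigma,\Omega}(x,y)^2\dd x$. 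Decomposing $K_{\sigma,\Omega}=\Phi_\sigma-H_{\sigma,\Omega}$ and using the real-analytic extension of $H_{\sigma,\Omega}$ from \cref{thm:robin-real-main}, we derive a two-term expansion as $\varepsilon\to0$:
\[
E_{\sigma,\varepsilon}(y)=c_{N,\sigma}\varepsilon^{4\sigma-N}+F_\sigma(y)+o(1),
\]
whose finite part $F_\sigma$ is computed in two ways. First, through the Riesz composition $\Phi_\sigma*\Phi_\sigma=\Phi_{2\sigma}$, it equals $-R_{2\sigma,\Omega}(y)$ plus explicit constant terms. Second, through direct expansion, it is a combination of $-2\int \Phi_\sigma(x-y)H_{\sigma,\Omega}(x,y)\dd x$ and $\int_\Omega H_{\sigma,\Omega}(x,y)^2\dd x$. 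Comparing the two yields an identity of the schematic form
\[
R_{2\sigma,\Omega}(y)=2\,\mathrm{f.p.}\!\int_\Omega \Phi_\sigma(x-y)H_{\sigma,\Omega}(x,y)\dd x-\int_\Omega H_{\sigma,\Omega}(x,y)^2\dd x+\text{(terms from }\mathbb R^N\setminus\Omega).
\]
Here the $C^{2,\vartheta}$ boundary regularity is used only to justify differentiating under the integral twice in $y$, and only the order-$\sigma$ kernel appears, so the needed regularity does not grow with $p$.

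We then iterate: $p\mapsto p/2\mapsto\cdots$ passes through noninteger orders, so we work across all real orders and ultimately reduce to some $\sigma\in(0,1)$. Differentiating the doubling identity twice in $y$ expresses $D^2R_{2\sigma,\Omega}$ as a positive transfer of $D^2R_{\sigma,\Omega}$ (or of $D^2_y H_{\sigma,\Omega}$ against positive weights), minus a Gram-type term $\int_\Omega \nabla_yH\otimes\nabla_yH$ and a term $\int H\,D^2_yH$. The sign of each term is the core difficulty. At the critical point $y_{p,\Omega}$, the first-order terms reorganize, because $\nabla R_{p,\Omega}(y_{p,\Omega})=0$ and, via the Green representation, $\nabla_yH$ integrates against $K$ to a multiple of $\nabla R$. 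This cancellation lets us absorb the Gram-type term at the center in the full range $N>2p$. Away from the center no such cancellation is available, and we instead bound the negative term by the positive one using explicit Riesz integrals such as $\int|x|^{2\sigma-N}|x|^{2\sigma-N-2}$. This comparison works when the dimensional constants satisfy an inequality that turns out to be $N>3p$, which explains the global statement \eqref{eq:integer-global-positive}.

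The main obstacle will be the rigorous two-term expansion of the truncated square energies for orders $\sigma$ near $N/4$, together with the positivity of the transferred Hessian at each doubling step. We would first establish the expansion for small $\sigma$, where the singularities are integrable enough, and then propagate it to higher orders by real-analyticity in $\sigma$ of both sides: the Mellin representation \eqref{eq:Ks-mellin-definition-intro} makes $K_{\sigma,\Omega}$ analytic in $\sigma$ off the diagonal. Strict positivity at the base case comes from \cref{thm:fractional-hessian-intro}, and we carry it through each step by the strict inequalities just described.
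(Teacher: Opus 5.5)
There is a genuine gap: the step that is supposed to produce a sign is not actually carried out. Your overall skeleton does match the paper. You use a dyadic chain $p/2^k\to\cdots\to p/2\to p$ from the fractional base case, and the finite-part identity $R_{2\sigma,\Omega}(y)=\int_{\R^N}[\Phi_\sigma(z-y)^2-\widetilde K_{\sigma,y}(z)^2]\,\dd z$ coming from $A^{-\sigma}A^{-\sigma}=A^{-2\sigma}$.

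Now differentiate that identity twice in $y$. Write it as $\int_{\R^N\setminus\Omega}\Phi_\sigma^2+\int_\Omega[2\Phi_\sigma H_{\sigma,\Omega}-H_{\sigma,\Omega}^2]$ and translate $z=y+w$ to keep derivatives off the singular factor. You get three kinds of terms:
\begin{itemize}
\item $\Phi_\sigma$-weighted averages of $(\partial_{x,\eta}+\partial_{y,\eta})^2H_{\sigma,\Omega}(y+w,y)$ at off-diagonal points;
\item flux terms from the moving domain;
\item $-2\int_\Omega[(\partial_{y,\eta}H_{\sigma,\Omega})^2+H_{\sigma,\Omega}\,\partial_{y,\eta}^2H_{\sigma,\Omega}]$.
\end{itemize}
$D^2R_{\sigma,\Omega}(y)[\eta,\eta]$ is only the $w=0$ value of the first integrand. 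Apart from the Gram term, which has the wrong sign, none of these terms has a known sign. Your remedies are asserted rather than derived: the cancellation at the center ``via the Green representation'' and the Riesz-integral comparison that ``turns out'' to give $N>3p$. The plan is also silent on the intermediate steps. The last step needs $D^2R_{p/2,\Omega}(y_{p,\Omega})>0$, and $y_{p,\Omega}$ is not a critical point of $R_{p/2,\Omega}$, so the earlier steps must be global.

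The missing idea is that the sign comes from convexity, not from differentiating the identity.

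First, the paper truncates by level: $J_{\sigma,L}(y)=\int_\Omega\min\{K_{\sigma,\Omega}(x,y),L\}^2\,\dd x$. Capping at height $L$ is compatible with negative power means (\cref{lem:negative-mean-truncation}). Combined with joint convexity of $K_{\sigma,\Omega}^{-1/(N-2\sigma)}$ and Borell--Brascamp--Lieb in dimension $N$, admissible exactly when $N>4\sigma$, this makes $J_{\sigma,L}^{-1/(N-4\sigma)}$ convex (\cref{prop:real-J-convex}). Your ball excision has no such property. Take $x_1-y_1=-(x_0-y_0)$ with $|x_0-y_0|\ge\varepsilon$ and $\theta=\tfrac12$: then $x_\theta=y_\theta$ lies in the excised ball while both endpoint values are positive, so the hypothesis of \cref{lem:BBL-general} fails.

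Second, the expansion must go one order beyond your $o(1)$:
\[
J_{\sigma,L}=C_{N,\sigma}L^\alpha-R_{2\sigma,\Omega}+B_{N,\sigma}R_{\sigma,\Omega}L^{-\beta}+O_{C^2(K)}(L^{-\beta-1/m}),
\]
with $\alpha=(N-4\sigma)/(N-2\sigma)$, $\beta=2\sigma/(N-2\sigma)$ and $B_{N,\sigma}>0$. The paper proves this via $C^2$ asymptotics of superlevel-set volumes. After normalization it says that $R_{2\sigma,\Omega}-B_{N,\sigma}L^{-\beta}R_{\sigma,\Omega}$ is convex, up to an $o(L^{-\beta})$ error in $C^2$ and a nonnegative chain-rule term of size $L^{-\alpha}(\partial_\eta R_{2\sigma,\Omega})^2$. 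A null direction $\eta$ of $D^2R_{2\sigma,\Omega}$ would then give second derivative $\approx-B_{N,\sigma}L^{-\beta}D^2R_{\sigma,\Omega}[\eta,\eta]<0$, a contradiction.

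This is where the thresholds really come from. The chain-rule term is negligible at critical points of $R_{2\sigma,\Omega}$, which needs only $N>4\sigma$. Globally it is negligible iff $L^{-\alpha}=o(L^{-\beta})$, i.e.\ $N>6\sigma$. Since $\sigma_j\le p/4$ for all steps up to $p/2$, $N>2p$ already covers them, and $N>3p$ is exactly $N>6\sigma$ at $\sigma=p/2$. No analytic continuation in $\sigma$ is needed, and the boundary regularity enters only through the fractional base case.
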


Rather than developing a separate high-order boundary identity at each
integer order, we exploit the spectral identity
\eqref{eq:spectral-semigroup-intro} and introduce a finite-part doubling principle across real spectral orders,
which transfers second-order information from
$R_{\sigma,\Omega}$ to $R_{2\sigma,\Omega}$.  The transfer is based on a
two-term finite-part expansion of a convex family of truncated square
energies, in which $R_{2\sigma,\Omega}$ and $R_{\sigma,\Omega}$ appear at
two consecutive asymptotic orders.  Starting from a fractional order and
iterating through
\[
 \frac{p}{2^k}\longrightarrow\frac{p}{2^{k-1}}
 \longrightarrow\cdots\longrightarrow\frac p2\longrightarrow p,
\]
we reach the Navier order without introducing a new boundary computation at
each step.  This spectral transfer is a central ingredient of the
paper and also explains why the $C^{2,\vartheta}$ boundary assumption in the
second-order theorem is independent of $p$.

The two parts of the paper are therefore complementary.  The first-order
results extend the classical Green-kernel convexity mechanism to the entire
real inverse-spectral family and recover the Robin geometry from its diagonal
finite part.  The second-order theory starts from a fractional
translation--curvature identity and then uses the continuous spectral family
to transfer nondegeneracy to arbitrary Navier order.  Thus the fractional and polyharmonic results are linked by the common
spectral structure, rather than merely by a common notation.

We finally mention one consequence in the biharmonic case.  For $p=2$ and
$N\ge5$, the above results give a unique nondegenerate biharmonic Robin
center on smooth bounded convex domains.  Existing one-bubble reduction and
blow-up theories for nearly critical Navier problems
\cite{BenAyedElMehdi2006,ElMehdi2006,TakahashiNondeg2008,DengYu2023}
therefore have their nondegeneracy hypothesis on the relevant Robin critical
point automatically satisfied in this setting.  We mention this only to indicate
the role of the geometric results and do not include a separate applications
section.

The paper is organized as follows.  \cref{sec:spectral-BM} proves the
Green-kernel convexity theorem for arbitrary real spectral orders.  \cref{sec:robin-transfer}
develops the Robin theory for arbitrary real spectral orders and proves
\cref{thm:robin-real-main}.  \cref{sec:fractional-hessian} is devoted
to the fractional translation--curvature identity and the proof of
\cref{thm:fractional-hessian-intro}.  \cref{sec:real-order-doubling}
develops the finite-part doubling principle and proves
\cref{thm:all-integer-intro}.  The boundary heat-kernel derivative estimate
used in the fractional argument is proved in
Appendix~\ref{app:boundary-heat}.

\section{Green-kernel convexity for inverse spectral powers}
\label{sec:spectral-BM}

The proof of \cref{thm:spectral-main} combines a Minkowski-domain
log-concavity inequality for the Dirichlet heat kernel with a scale-covariant
space--time transform and a one-dimensional Borell--Brascamp--Lieb
integration.

We first record the pointwise Dynkin--Hunt identity for the Dirichlet heat
kernel.  It will
be used both in the diagonal analysis below and later in the fractional
Robin regularity argument.

\begin{lemma}\label{lem:pointwise-hunt}
Let $A=-\Delta_D$ be the Friedrichs realization of the Dirichlet Laplacian on
$L^2(\Omega)$, let $p_\Omega$ denote the continuous symmetric kernel of
$e^{-tA}$, and let $(B_t)_{t\ge0}$ be Brownian motion with generator $\Delta$.
If
\[
 \tau_\Omega:=\inf\{t>0:B_t\notin\Omega\},
\]
then for every $t>0$ and every $x,y\in\Omega$,
\begin{equation}\label{eq:not-feeling-hunt}
 0\le g_t(x-y)-p_\Omega(t,x,y)
 =\mathbb E_x\!\left[
   \mathbf 1_{\{\tau_\Omega<t\}}
   g_{t-\tau_\Omega}(B_{\tau_\Omega}-y)
 \right].
\end{equation}
\end{lemma}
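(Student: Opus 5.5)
The plan is to first prove the identity for almost every $y$ in the sense of distributions/integrals against test functions, and then upgrade it to every pair $(x,y)$ by continuity. Here $g_t(z)=(4\pi t)^{-N/2}e^{-|z|^2/(4t)}$ is the Gaussian kernel of $e^{t\Delta}$. For $f\in C_c(\Omega)$ set $u(t,x):=\int g_t(x-y)f(y)\dd y$ and $v(t,x):=(e^{-tA}f)(x)=\int p_\Omega(t,x,y)f(y)\dd y$. The probabilistic representation of the Dirichlet semigroup (Feynman--Kac with killing at $\tau_\Omega$) gives $v(t,x)=\mathbb E_x[f(B_t)\mathbf 1_{\{\tau_\Omega>t\}}]$. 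This identification of the Friedrichs semigroup with the killed process holds on any bounded open set; I would cite it as standard, via Dirichlet form theory and the identification $H_0^1(\Omega)$ with the part process.

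Next, by the strong Markov property at $\tau_\Omega$,
\[
\mathbb E_x[f(B_t)\mathbf 1_{\{\tau_\Omega\le t\}}]
=\mathbb E_x\!\left[\mathbf 1_{\{\tau_\Omega\le t\}}\,(e^{(t-\tau_\Omega)\Delta}f)(B_{\tau_\Omega})\right]
=\int f(y)\,\mathbb E_x\!\left[\mathbf 1_{\{\tau_\Omega< t\}}g_{t-\tau_\Omega}(B_{\tau_\Omega}-y)\right]\dd y .
\]
The event $\{\tau_\Omega=t\}$ has probability zero. At $\tau_\Omega=t$ the integrand is $f(B_t)$ with $B_t\in\partial\Omega$, where $f$ vanishes, so including or excluding that event is harmless. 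Fubini is justified by positivity. Subtracting from $u(t,x)=\mathbb E_x f(B_t)$ yields the integrated form of \eqref{eq:not-feeling-hunt} for all $f\in C_c(\Omega)$. Hence the identity holds for a.e.\ $y$, for each fixed $x,t$.

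The main obstacle is passing to every $y$. This requires the right-hand side $r_t(x,y):=\mathbb E_x[\mathbf 1_{\{\tau<t\}}g_{t-\tau}(B_\tau-y)]$ to be continuous in $y\in\Omega$. For fixed $y\in\Omega$ with $\delta=\dist(y,\partial\Omega)>0$, we have $|B_\tau-y|\ge\delta$. The family $s\mapsto g_s(z)$ for $|z|\ge\delta/2$, $s\in(0,t]$, is uniformly bounded and equicontinuous in $z$, since $\sup_{s>0}s^{-N/2-1}e^{-\delta^2/(16 s)}<\infty$ controls the gradient. Dominated convergence then gives continuity of $r_t(x,\cdot)$ on $\Omega$. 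Since $g_t(x-\cdot)$ and $p_\Omega(t,x,\cdot)$ are continuous (the latter by hypothesis), two continuous functions agreeing a.e.\ on $\Omega$ agree everywhere.

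Nonnegativity is immediate from the representation, which closes the argument.
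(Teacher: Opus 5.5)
Your proposal is correct and follows the paper's proof essentially step by step. You test the killed-semigroup representation against $f\in C_c(\Omega)$ and apply the strong Markov property at $\tau_\Omega$, with the event $\{\tau_\Omega=t\}$ harmless because $f$ vanishes on $\partial\Omega$. You obtain the identity for a.e.\ $y$, and you upgrade it to every $y$ via continuity of the right-hand side, using the separation $|B_{\tau_\Omega}-y|\ge\dist(y,\partial\Omega)$. The differences are cosmetic: for signed $f$ the Fubini step should be justified by applying Tonelli to $|f|$, as the paper does, and your gradient bound gives a uniform Lipschitz estimate where the paper uses dominated convergence with a uniform sup bound.
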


\begin{proof}
The Dirichlet semigroup generated by the Friedrichs realization is the
semigroup of Brownian motion killed upon leaving $\Omega$; equivalently, its
continuous symmetric kernel is the killed Brownian heat kernel.  In the
normalization with generator $\frac12\Delta$, the corresponding pointwise
Brownian-bridge representation is \cite[Theorem~2.1]{Grillo1997}; replacing
time by $2t$ gives the present normalization.

Let $f\in C_c^\infty(\Omega)$ and extend it by zero to $\R^N$, denoting the
extension by $\widetilde f$.  The killed-semigroup identity gives
\[
 \int_\Omega p_\Omega(t,x,y)f(y)\,\dd y
 =\mathbb E_x\!\left[\widetilde f(B_t);\ t<\tau_\Omega\right].
\]
Applying the strong Markov property at $\tau_\Omega$ to the complementary
event yields
\begin{align*}
 \int_\Omega\bigl[g_t(x-y)-p_\Omega(t,x,y)\bigr]f(y)\,\dd y
 &=\mathbb E_x\!\left[
   \mathbf1_{\{\tau_\Omega<t\}}
   P_{t-\tau_\Omega}\widetilde f(B_{\tau_\Omega})
 \right]\\
 &=\int_\Omega
   \mathbb E_x\!\left[
   \mathbf1_{\{\tau_\Omega<t\}}
   g_{t-\tau_\Omega}(B_{\tau_\Omega}-y)
   \right]f(y)\,\dd y,
\end{align*}
where Tonelli's theorem is applied after taking absolute values.  The event $\{\tau_\Omega=t\}$ contributes nothing because
$B_{\tau_\Omega}\in\partial\Omega$ by path continuity and
$\widetilde f=0$ on $\partial\Omega$.  Thus \eqref{eq:not-feeling-hunt}
holds for almost every $y$.

To remove this exceptional null set, fix $y_0\in\Omega$ and
choose $0<\rho<\frac12\dist(y_0,\partial\Omega)$.  For
$y\in\overline{B_\rho(y_0)}$ and on $\{\tau_\Omega<t\}$,
\[
 |B_{\tau_\Omega}-y|
 \ge \dist(\overline{B_\rho(y_0)},\partial\Omega)=:d_0>0.
\]
Hence
\[
 \sup_{\substack{0<u\le t\\ |z-y|\ge d_0}}g_u(z-y)<\infty,
\]
so dominated convergence shows that the expectation on the right-hand side of
\eqref{eq:not-feeling-hunt} is continuous in $y$ near $y_0$.  The left-hand
side is continuous as well, because $g_t$ and the Dirichlet heat kernel
$p_\Omega(t,x,\cdot)$ are continuous.  Since the two continuous functions are
equal almost everywhere, they are equal everywhere.  Nonnegativity follows
from the same formula.
\end{proof}

\begin{lemma}
\label{lem:Ks-basic}
Let $s>0$ and $N>2s$.  Then
\[
 0<K_{s,\Omega}(x,y)<\infty,
 \qquad x\ne y,
\]
and
\[
 K_{s,\Omega}\in
 C\bigl((\Omega\times\Omega)\setminus\operatorname{Diag}\bigr),
 \qquad
 \operatorname{Diag}:=\{(x,x):x\in\Omega\}.
\]
Moreover, locally uniformly for $y$ in compact subsets of $\Omega$,
\begin{equation}\label{eq:Ks-diagonal-asymptotic}
 K_{s,\Omega}(x,y)
 =c_{N,s}|x-y|^{2s-N}\bigl(1+o(1)\bigr)
 \qquad\text{as }x\to y,
\end{equation}
where
\[
 c_{N,s}=\frac{\Gamma(\frac N2-s)}{4^s\pi^{N/2}\Gamma(s)}.
\]
Consequently, the function
\[
 (x,y)\longmapsto
 \begin{cases}
 K_{s,\Omega}(x,y)^{-1/(N-2s)},&x\ne y,\\
 0,&x=y,
 \end{cases}
\]
is continuous on $\Omega\times\Omega$.
\end{lemma}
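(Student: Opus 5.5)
We work throughout from the Mellin representation \eqref{eq:Ks-mellin-definition-intro} together with the two-sided comparison $0\le p_\Omega(t,x,y)\le g_t(x-y)$, where $g_t(z)=(4\pi t)^{-N/2}e^{-|z|^2/(4t)}$ is the free heat kernel for the generator $\Delta$. The comparison is \cref{lem:pointwise-hunt}.

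\emph{Step 1: positivity and finiteness.} Positivity holds because $p_\Omega(t,x,y)>0$ for all $t>0$ and $x,y\in\Omega$, since $\Omega$ is connected. For finiteness, split the integral at $t=1$. On $(0,1]$ we bound $p_\Omega\le g_t$, and $\int_0^1 t^{s-1}g_t(x-y)\,\dd t<\infty$ when $x\ne y$ because of the Gaussian factor. On $[1,\infty)$ we use the spectral bound $p_\Omega(t,x,y)\le Ce^{-\lambda_1 t}$. This follows from the semigroup property and Cauchy--Schwarz:
\[
p_\Omega(t,x,y)\le e^{-\lambda_1(t-1)}\,p_\Omega(1,x,x)^{1/2}p_\Omega(1,y,y)^{1/2},
\qquad p_\Omega(1,\cdot,\cdot)\le(4\pi)^{-N/2}.
\]
Hence the tail is finite for every $s>0$.

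\emph{Step 2: continuity off the diagonal.} On a compact set where $|x-y|\ge\delta$, the bound $t^{s-1}g_t(x-y)\le t^{s-1}(4\pi t)^{-N/2}e^{-\delta^2/(4t)}$ holds for $t\le1$, and $t^{s-1}Ce^{-\lambda_1 t}$ holds for $t\ge1$. Both are integrable and independent of $(x,y)$. Since $p_\Omega$ is jointly continuous in $(t,x,y)$, dominated convergence gives continuity of $K_{s,\Omega}$.

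\emph{Step 3: diagonal asymptotics.} This is the main step. Since $N>2s$, the free integral is exactly
\[
\frac1{\Gamma(s)}\int_0^\infty t^{s-1}g_t(z)\,\dd t=c_{N,s}|z|^{2s-N}.
\]
Writing $p_\Omega=g_t-(g_t-p_\Omega)$, it suffices to show that
\[
\frac1{\Gamma(s)}\int_0^\infty t^{s-1}\bigl(g_t(x-y)-p_\Omega(t,x,y)\bigr)\dd t
\]
stays bounded for $y$ in a compact $F\subset\Omega$ and $x$ near $y$. Bounded is enough, because $|x-y|^{2s-N}\to\infty$.

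Fix $d=\tfrac12\dist(F,\partial\Omega)$ and take $|x-y|<d/2$. In \cref{lem:pointwise-hunt} we have $|B_{\tau_\Omega}-y|\ge 2d$, so $g_{t-\tau}(B_\tau-y)\le\sup_{0<u\le t}(4\pi u)^{-N/2}e^{-d^2/u}$. This is bounded uniformly for $t\le1$, so the integrand is $O(t^{s-1})$ on $(0,1]$.

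For $t\ge1$, crude bounds are not enough: $t^{s-1}g_t$ alone is not integrable at infinity unless $N>2s$, and although $N>2s$ does hold here, we must also control $p_\Omega$. So we bound the two terms separately. The $p_\Omega$ term is handled by Step 1. For the free term, $\int_1^\infty t^{s-1}(4\pi t)^{-N/2}\,\dd t<\infty$ precisely because $N>2s$.

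This gives $K_{s,\Omega}(x,y)=c_{N,s}|x-y|^{2s-N}+O(1)$ uniformly, which implies \eqref{eq:Ks-diagonal-asymptotic}.

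\emph{Step 4: continuity of the transform.} Off the diagonal, continuity of $K^{-1/(N-2s)}$ follows from Steps 1--2. At a point $(x_0,x_0)$, Step 3, applied with $F$ a closed ball around $x_0$, gives $K_{s,\Omega}(x,y)\to\infty$ as $(x,y)\to(x_0,x_0)$ with $x\ne y$. Therefore $K^{-1/(N-2s)}\to0$. More precisely,
\[
K^{-1/(N-2s)}\sim c_{N,s}^{-1/(N-2s)}|x-y|,
\]
so the extension by zero is continuous.
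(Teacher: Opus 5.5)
Your proof is correct and follows essentially the same route as the paper: Gaussian domination and connectedness give positivity and off-diagonal continuity; the Dynkin--Hunt formula of \cref{lem:pointwise-hunt} bounds the small-time defect $g_t-p_\Omega$ uniformly for poles in a compact set; and the explicit Riesz integral yields $K_{s,\Omega}=c_{N,s}|x-y|^{2s-N}-O(1)$. The only deviation is your spectral-gap bound for $t\ge1$, which is valid but not needed; contrary to your remark in Step 3, the crude bound $0\le g_t-p_\Omega\le g_t$ already suffices on $[1,\infty)$ because $N>2s$, and this is exactly what the paper uses.
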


\begin{proof}
The Dirichlet heat kernel satisfies the Gaussian domination
\[
 0<p_\Omega(t,x,y)\le g_t(x-y)
 :=(4\pi t)^{-N/2}\exp\!\left(-\frac{|x-y|^2}{4t}\right).
\]
For $x\ne y$ this gives integrability of
$t^{s-1}p_\Omega(t,x,y)$ near $t=0$ because of the exponential factor, and
integrability near $+\infty$ because $s<N/2$.  Positivity follows from the
positivity of the Dirichlet heat kernel on the convex, hence connected,
domain $\Omega$.

We first prove continuity away from the diagonal.  Let
$Q\Subset(\Omega\times\Omega)\setminus\operatorname{Diag}$ and set
\[
 \delta_Q:=\min_{(x,y)\in Q}|x-y|>0.
\]
For each fixed $t>0$, the map $(x,y)\mapsto p_\Omega(t,x,y)$ is continuous
in the interior.  Moreover, uniformly for $(x,y)\in Q$,
\[
 t^{s-1}p_\Omega(t,x,y)
 \le
 \begin{cases}
 (4\pi)^{-N/2}
 t^{s-1-N/2}\exp\!\left(-\dfrac{\delta_Q^2}{4t}\right),&0<t\le1,\\[2mm]
 (4\pi)^{-N/2}t^{s-1-N/2},&t\ge1.
 \end{cases}
\]
The first function is integrable on $(0,1]$ because of the exponential
factor, while the second is integrable on $[1,\infty)$ precisely because
$s<N/2$.  Dominated convergence in the representation
\[
 K_{s,\Omega}(x,y)
 =\frac1{\Gamma(s)}\int_0^\infty
 t^{s-1}p_\Omega(t,x,y)\,\dd t
\]
therefore shows that $K_{s,\Omega}$ is continuous on $Q$.  Since $Q$ was an arbitrary compact subset of the off-diagonal region,
$K_{s,\Omega}\in C((\Omega\times\Omega)\setminus\operatorname{Diag})$.

We justify the diagonal asymptotic without using any boundary smoothness.
Fix $K\Subset\Omega$ and choose $K_1\Subset\Omega$ containing a closed
neighborhood of $K$.  For $y\in K$ and $x$ sufficiently close to $y$, both
$x$ and $y$ belong to $K_1$.  Put
$d_{K_1}:=\dist(K_1,\partial\Omega)>0$ and let $\tau_\Omega$ be the first exit
time of the Brownian motion in \cref{lem:pointwise-hunt}.  By
\eqref{eq:not-feeling-hunt}, on $\{\tau_\Omega<t\}$ one has
$B_{\tau_\Omega}\in\partial\Omega$ and hence
$|B_{\tau_\Omega}-y|\ge d_{K_1}$.  Therefore there exist
$t_{K_1},c_{K_1},C_{K_1}>0$ such that, for $0<t\le t_{K_1}$ and
$x,y\in K_1$,
\begin{equation}\label{eq:not-feeling-boundary}
 0\le g_t(x-y)-p_\Omega(t,x,y)
 \le C_{K_1}t^{-N/2}e^{-c_{K_1}/t}.
\end{equation}
Indeed, for $t_{K_1}$ sufficiently small the function
$u\mapsto u^{-N/2}e^{-d_{K_1}^2/(4u)}$ is increasing on
$(0,t_{K_1}]$, so the right-hand side of \eqref{eq:not-feeling-hunt} is
bounded by its value at $u=t$ up to a dimensional constant.

Since $s<N/2$, \eqref{eq:not-feeling-boundary} for small $t$ and the Gaussian
domination for $t\ge t_{K_1}$ show that
\[
 E_{s,K_1}(x,y):=
 \frac1{\Gamma(s)}\int_0^\infty
 t^{s-1}\bigl[g_t(x-y)-p_\Omega(t,x,y)\bigr] \,\dd t
\]
is uniformly bounded for $x,y\in K_1$.  On the other hand, the whole-space
integral is explicit:
\[
 \frac1{\Gamma(s)}\int_0^\infty t^{s-1}g_t(x-y)\,\dd t
 =c_{N,s}|x-y|^{2s-N},
\]
where
\[
 c_{N,s}=\frac{\Gamma(\frac N2-s)}{4^s\pi^{N/2}\Gamma(s)}.
\]
Consequently
\[
 K_{s,\Omega}(x,y)
 =c_{N,s}|x-y|^{2s-N}-E_{s,K_1}(x,y),
 \qquad x,y\in K_1,\ x\ne y.
\]
Because $2s-N<0$ and $E_{s,K_1}$ is uniformly bounded, this proves
\eqref{eq:Ks-diagonal-asymptotic} uniformly for $y\in K$ as $x\to y$.
Hence
\[
 K_{s,\Omega}(x,y)^{-1/(N-2s)}\longrightarrow0
 \qquad\text{as }x\to y,
\]
locally uniformly with respect to the pole $y$.  Combined with the off-diagonal continuity
proved above, this yields the asserted continuity of the inverse-power
transform on all of $\Omega\times\Omega$.
\end{proof}

\subsection{Minkowski log-concavity of Dirichlet heat kernels}

Let
\[
 g_h(z)=(4\pi h)^{-N/2}\exp\!\left(-\frac{|z|^2}{4h}\right)
\]
be the free heat kernel.  For a convex open set $D\subset\R^N$, $M\in\mathbb N$
with $h=t/M$, and $x,y\in D$, set
\begin{equation}\label{eq:discrete-kernel}
 K_{D,M}(t;x,y)
 :=\int_{D^{M-1}}\prod_{j=0}^{M-1}g_h(z_{j+1}-z_j)
 \,\dd z_1\cdots\dd z_{M-1},
\end{equation}
where $z_0=x$ and $z_M=y$.

\begin{lemma}
\label{lem:dyadic-killed-kernel}
Let $D\subset\R^N$ be a convex open set, let $x,y\in D$, and let $t>0$.
For the finite-step kernel \eqref{eq:discrete-kernel}, one has
\begin{equation}\label{eq:dyadic-killed-limit}
 K_{D,2^k}(t;x,y)\longrightarrow p_D(t,x,y)
 \qquad\text{as }k\to\infty.
\end{equation}
\end{lemma}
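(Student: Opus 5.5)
We will use the probabilistic representation of the finite-step kernel.  Let $(B_u)_{u\ge0}$ be Brownian motion with generator $\Delta$ started at $x$, as in \cref{lem:pointwise-hunt}, and put $h_k=t2^{-k}$.  By the Markov property, \eqref{eq:discrete-kernel} is the density at $y$ of $B_t$ restricted to the event
\[
 E_k:=\{B_{jh_k}\in D\ \text{for } j=1,\dots,2^k-1\},
\]
that is, $K_{D,2^k}(t;x,y)=g_t(x-y)\,\mathbb P^{t}_{x\to y}(E_k)$, where $\mathbb P^t_{x\to y}$ is the Brownian bridge law from $x$ to $y$ over $[0,t]$.  On the other side, the Brownian-bridge representation of the killed kernel quoted in the proof of \cref{lem:pointwise-hunt} (from \cite{Grillo1997}) gives
\[
 p_D(t,x,y)=g_t(x-y)\,\mathbb P^t_{x\to y}(B_u\in D\ \forall u\in[0,t]).
\]
This is valid for convex $D$, bounded or not, since killed Brownian motion always has this representation.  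The lemma therefore reduces to the statement $\mathbb P^t_{x\to y}(E_k)\to\mathbb P^t_{x\to y}(E_\infty)$, where $E_\infty:=\{B_u\in D\ \forall u\in[0,t]\}$.

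The events $E_k$ decrease in $k$ because the dyadic grids are nested.  By continuity of measure,
\[
 \mathbb P(E_k)\downarrow\mathbb P\Bigl(\bigcap_kE_k\Bigr),
\]
and $\bigcap_kE_k$ is the event that $B$ lies in $D$ at every dyadic time in $(0,t)$.  By path continuity this event is contained in $\{B_u\in\overline D\ \forall u\in[0,t]\}$ and contains $E_\infty$.  The key step is to show that the difference is bridge-null.  A difference path stays in $\overline D$ and touches $\partial D$ at some interior time $u\in(0,t)$; the endpoints $x,y\in D$ rule out the times $0$ and $t$.

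At this step we use convexity.  At a touching point $z=B_u\in\partial D$ there is a supporting hyperplane, so a one-dimensional projection $\langle B_\cdot,\nu\rangle$ attains a local maximum equal to $\langle z,\nu\rangle$ at time $u$.  To make this countable, we cover $\partial D$ by rational supporting half-spaces in a suitable approximate sense; the simplest version uses the interior structure.

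An alternative route avoids the hyperplane bookkeeping.  For the bridge, $\mathbb P(B_u\in\overline D\ \forall u)=\mathbb P(B_u\in D\ \forall u)$ whenever $\partial D$ is regular for Brownian motion, and convex sets satisfy the exterior cone condition.  Concretely, we apply the strong Markov property at the first hitting time $\sigma$ of $\partial D$.  Since every boundary point of a convex set is regular (exterior cone condition), the path exits $\overline D$ immediately after $\sigma$ almost surely, so the path cannot stay in $\overline D$ after touching.  On the bridge we apply this on $[0,t-\epsilon]$, where the bridge law is absolutely continuous with respect to free Brownian motion, and then let $\epsilon\downarrow0$, using $y\in D$ to handle the final segment.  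This shows the difference is null, and hence $\mathbb P(E_k)\downarrow\mathbb P(E_\infty)$.

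The main obstacle is this boundary-touching nullity.  It is where convexity enters, through regularity of every boundary point.  We will carry it out with the exterior-cone/regularity argument above, passing from free Brownian motion to the bridge by absolute continuity on $[0,t-\epsilon]$.  Multiplying by $g_t(x-y)$ then gives \eqref{eq:dyadic-killed-limit}.  The convergence is in fact monotone decreasing.
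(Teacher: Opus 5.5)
Your proof is correct and follows the paper's argument. You use the bridge representation $K_{D,2^k}(t;x,y)=g_t(y-x)\,\mathbb P^t_{x,y}(E_k)$, the monotone limit over the nested events, immediate exit from $\overline D$ at boundary points via the strong Markov property for free Brownian motion, and transfer to the bridge by absolute continuity on $\mathcal F_{t-\epsilon}$, with $y\in D$ handling the terminal time. Your sandwich $E_\infty\subset\bigcap_kE_k\subset\{B_u\in\overline D\ \forall u\}$ actually lets you skip the paper's step showing that the bridge a.s.\ avoids $\partial D$ at dyadic times. One correction: justify the immediate-exit claim by the open half-space beyond a supporting hyperplane, which is an exterior cone disjoint from $\overline D$, as the paper does; regularity of $\partial D$ for $D^c$ alone gives only immediate return to $D^c$, not exit from $\overline D$.
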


\begin{proof}
We use Brownian motion with generator $\Delta$, so that its transition density
is $g_t$.  The Brownian-bridge representation of killed heat kernels used below
is classical.  In the normalization with generator $\frac12\Delta$ it is
stated, for example, in \cite[Theorem~2.1]{Grillo1997}; replacing the time
parameter there by $2t$ gives the present normalization.  Let $\mathbb P_x$
denote the law of our Brownian motion started from $x$, and let
$\mathbb P^t_{x,y}$ denote the corresponding Brownian-bridge law from $x$ to
$y$ in time $t$.  The finite-dimensional density of the bridge gives
\begin{equation}\label{eq:bridge-discrete-representation}
 K_{D,2^k}(t;x,y)
 =g_t(y-x)\,\mathbb P^t_{x,y}(E_k),
\end{equation}
where
\begin{equation*}
 E_k:=\left\{B_{jt/2^k}\in D:\ j=1,\ldots,2^k-1\right\}.
\end{equation*}
The dyadic grids are nested, hence $E_{k+1}\subset E_k$, and therefore
\begin{equation}\label{eq:Ek-limit-prob}
 \mathbb P^t_{x,y}(E_k)\downarrow
 \mathbb P^t_{x,y}\!\left(\bigcap_{k\ge1}E_k\right).
\end{equation}

We next identify the limiting event.  Let
\[
 E_{\overline D}:=\{B_\tau\in\overline D\text{ for every }0\le\tau\le t\}.
\]
If a continuous path belongs to every $E_k$, then density of the dyadic times
implies that the whole path lies in $\overline D$.  Conversely, a path in
$E_{\overline D}$ belongs to every $E_k$ unless it hits $\partial D$ at a
dyadic time.  For each fixed $q\in(0,t)$, the Brownian-bridge marginal has the
Lebesgue density
\begin{equation*}
 z\longmapsto
 \frac{g_q(z-x)g_{t-q}(y-z)}{g_t(y-x)}.
\end{equation*}
Since the boundary of a convex open set has $N$-dimensional Lebesgue measure
zero, and the set of dyadic times is countable,
\[
 \mathbb P^t_{x,y}
 \{B_q\in\partial D\text{ for some dyadic }q\in(0,t)\}=0.
\]
Consequently
\begin{equation}\label{eq:dyadic-closure-event}
 \bigcap_{k\ge1}E_k=E_{\overline D}
 \qquad\mathbb P^t_{x,y}\text{-a.s.}
\end{equation}

To replace $\overline D$ by $D$, note the following pathwise
regularity property of unconditioned Brownian motion.  If $z\in\partial D$, convexity
provides a supporting unit normal $\nu$ such that
\[
 \overline D\subset\{w:(w-z)\cdot\nu\le0\}.
\]
For Brownian motion started at $z$, the projection
$(B_s-z)\cdot\nu$ is a one-dimensional Brownian motion (up to the variance normalization corresponding to the generator $\Delta$).  A one-dimensional
Brownian motion started at $0$ almost surely takes positive values in every
interval $(0,\varepsilon)$.  Hence Brownian motion started from any
$z\in\partial D$ exits $\overline D$ immediately with probability one.

Now fix $T<t$.  On $\mathcal F_T$, Brownian bridge and Brownian motion are
mutually absolutely continuous; more precisely,
\begin{equation}\label{eq:bridge-RN}
 \frac{\dd\mathbb P^t_{x,y}}{\dd\mathbb P_x}
 \Big|_{\mathcal F_T}
 =\frac{g_{t-T}(y-B_T)}{g_t(y-x)}>0.
\end{equation}
Let $A_T$ be the event that the path touches $\partial D$ at some time
strictly before $T$ while remaining in $\overline D$ throughout $[0,T]$.
For Brownian motion, the strong Markov property at the first boundary hitting
time and the immediate-exit property established above give $\mathbb P_x(A_T)=0$.
By \eqref{eq:bridge-RN}, also
$\mathbb P^t_{x,y}(A_T)=0$.  Taking the countable union over
$T_n=t(1-1/n)$, $n\ge2$, shows that, under the bridge law, a path which stays in
$\overline D$ cannot touch $\partial D$ at any time strictly before $t$, except on a null set.  Since $y=B_t\in D$, continuity rules out a first boundary
touch at time $t$.  Therefore
\begin{equation}\label{eq:bridge-open-closure}
 \mathbb P^t_{x,y}(E_{\overline D})
 =
 \mathbb P^t_{x,y}
 \{B_\tau\in D\text{ for every }0<\tau<t\}.
\end{equation}

Finally, the killed-heat-kernel/bridge identity cited above gives
\begin{equation}\label{eq:killed-bridge}
 p_D(t,x,y)
 =g_t(y-x)\,
 \mathbb P^t_{x,y}
 \{B_\tau\in D\text{ for every }0<\tau<t\}.
\end{equation}
Combining \eqref{eq:bridge-discrete-representation},
\eqref{eq:Ek-limit-prob}, \eqref{eq:dyadic-closure-event},
\eqref{eq:bridge-open-closure}, and \eqref{eq:killed-bridge} proves
\eqref{eq:dyadic-killed-limit}.
\end{proof}

\begin{lemma}\label{lem:heat-domain-Minkowski}
Let $D_0,D_1\subset\R^N$ be convex open sets and $0<\theta<1$.  Put
\[
 D_\theta=(1-\theta)D_0+\theta D_1,
\quad
 x_\theta=(1-\theta)x_0+\theta x_1,
\quad
 y_\theta=(1-\theta)y_0+\theta y_1.
\]
Then for every $t>0$,
\begin{equation}\label{eq:heat-domain-Minkowski}
 p_{D_\theta}(t,x_\theta,y_\theta)
 \ge
 p_{D_0}(t,x_0,y_0)^{1-\theta}
 p_{D_1}(t,x_1,y_1)^\theta.
\end{equation}
\end{lemma}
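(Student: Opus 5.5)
We prove \eqref{eq:heat-domain-Minkowski} first for the finite-step kernels \eqref{eq:discrete-kernel} and then pass to the limit with \cref{lem:dyadic-killed-kernel}. Fix $M=2^k$ and $h=t/M$. For $i=0,1$ consider the function on $\R^{N(M-1)}$
\[
 F_i(z_1,\ldots,z_{M-1})
 :=\prod_{j=0}^{M-1}g_h(z_{j+1}-z_j)\,
 \prod_{j=1}^{M-1}\mathbf 1_{D_i}(z_j),
 \qquad z_0=x_i,\ z_M=y_i,
\]
so that $K_{D_i,M}(t;x_i,y_i)=\int F_i$. Define $F_\theta$ in the same way with $D_\theta$, $x_\theta$, $y_\theta$. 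The plan is to verify the Prékopa--Leindler hypothesis
\[
 F_\theta\bigl((1-\theta)Z+\theta W\bigr)\ge F_0(Z)^{1-\theta}F_1(W)^\theta
\]
for all $Z,W\in\R^{N(M-1)}$. Prékopa--Leindler in dimension $N(M-1)$ then gives $K_{D_\theta,M}(t;x_\theta,y_\theta)\ge K_{D_0,M}^{1-\theta}K_{D_1,M}^{\theta}$.

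The hypothesis is checked factor by factor.

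\emph{Gaussian factors.} Each map $(a,b)\mapsto g_h(b-a)$ is log-concave on $\R^{2N}$, because $-|b-a|^2/(4h)$ is concave. Hence, with $z_j^\theta=(1-\theta)z_j+\theta w_j$ and the endpoints interpolated in the same way (which is exactly how $x_\theta,y_\theta$ are defined), we get
\[
 g_h(z^\theta_{j+1}-z^\theta_j)\ge g_h(z_{j+1}-z_j)^{1-\theta}g_h(w_{j+1}-w_j)^\theta .
\]

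\emph{Indicator factors.} We need $\mathbf 1_{D_\theta}(z_j^\theta)\ge\mathbf 1_{D_0}(z_j)^{1-\theta}\mathbf 1_{D_1}(w_j)^\theta$. This is immediate: if $z_j\in D_0$ and $w_j\in D_1$, then $z_j^\theta\in D_\theta$ by the definition of the Minkowski combination.

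Multiplying these inequalities gives the hypothesis. Convexity of $D_0,D_1$ is not needed for this step; it enters only through \cref{lem:dyadic-killed-kernel}. In the degenerate case $M=1$ there are no intermediate variables, and the inequality is just the Gaussian log-concavity.

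For the limit, note that $D_\theta$ is again convex and open, being a Minkowski combination of convex open sets. So \cref{lem:dyadic-killed-kernel} applies to all three domains with $k\to\infty$, and taking limits in the discrete inequality yields \eqref{eq:heat-domain-Minkowski}. The inequality is stated for arbitrary points, and when $x_i\notin D_i$ or $y_i\notin D_i$ the kernel is zero by convention. In that case the right-hand side vanishes and nothing needs proving. Otherwise all points lie in their respective domains, as the lemma requires.

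The main obstacle is already handled by \cref{lem:dyadic-killed-kernel}: identifying the limit of the dyadic discretization with the killed kernel, including the closure-versus-interior issue at the boundary. What remains is to apply Prékopa--Leindler carefully, on the full product space with the indicators included, so that no measurability or normalization issue arises. The $F_i$ are Borel and nonnegative, so the standard statement applies directly.
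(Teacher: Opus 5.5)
Your proposal is correct and follows the paper's own proof: Prékopa--Leindler is applied to the finite-step kernels (Gaussian log-concavity segment by segment, together with Minkowski membership of the interpolated intermediate points), and the dyadic limit of \cref{lem:dyadic-killed-kernel} is then taken for $D_0$, $D_1$, $D_\theta$. Your extra remarks are harmless refinements that the paper leaves implicit: $D_\theta$ is open and convex, and the case of endpoints outside their domains is trivial.
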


\begin{proof}
For the finite-step kernels \eqref{eq:discrete-kernel}, define on
$\R^{N(M-1)}$
\[
 f_i(Z):=\mathbf 1_{D_i^{M-1}}(Z)
 \prod_{j=0}^{M-1}g_h(z_{j+1}-z_j),
 \qquad i=0,1,\theta,
\]
where the endpoints in the product are $(x_i,y_i)$.  If
$Z_i=(z_{i,1},\ldots,z_{i,M-1})\in D_i^{M-1}$ and
$Z_\theta=(1-\theta)Z_0+\theta Z_1$, then
$Z_\theta\in D_\theta^{M-1}$.  The Gaussian is log-concave, hence, segment by
segment,
\[
 f_\theta(Z_\theta)
 \ge f_0(Z_0)^{1-\theta}f_1(Z_1)^\theta.
\]
If one of $Z_0,Z_1$ lies outside the corresponding product domain, the
right-hand side is zero and the same inequality remains true.  Thus the
Pr\'ekopa--Leindler hypothesis holds on all of $\R^{N(M-1)}$, and it gives
\begin{equation}\label{eq:finite-step-PL}
 K_{D_\theta,M}(t;x_\theta,y_\theta)
 \ge
 K_{D_0,M}(t;x_0,y_0)^{1-\theta}
 K_{D_1,M}(t;x_1,y_1)^\theta.
\end{equation}
Choose $M=2^k$ and let $k\to\infty$.  Applying \cref{lem:dyadic-killed-kernel} to $D_0,D_1,D_\theta$ in
\eqref{eq:finite-step-PL} yields \eqref{eq:heat-domain-Minkowski}.  This finite-dimensional estimate is the classical Pr\'ekopa--Leindler
mechanism underlying the diffusion results of Brascamp--Lieb \cite{BrascampLieb}.
\end{proof}

\subsection{A scale-covariant family of space--time convex transforms}

The Dirichlet heat kernel obeys
\begin{equation}\label{eq:heat-scaling}
 p_\Omega(r^2,x,y)
 =r^{-N}p_{\Omega/r}\!\left(1,\frac xr,\frac yr\right),
 \qquad r>0.
\end{equation}

\begin{proposition}\label{prop:Hbeta}
For every $\beta>0$, define
\begin{equation}\label{eq:Hbeta}
 \mathcal H_\beta(x,y,r)
 :=r^{1-\beta}p_\Omega(r^2,x,y)^{-\beta/N},
 \qquad (x,y,r)\in\Omega\times\Omega\times(0,\infty).
\end{equation}
Then $\mathcal H_\beta$ is jointly convex on
$\Omega\times\Omega\times(0,\infty)$.
\end{proposition}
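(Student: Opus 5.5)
Joint convexity of $\mathcal H_\beta$ follows from \cref{lem:heat-domain-Minkowski} applied to the rescaled domains $\Omega/r_0$, $\Omega/r_1$, combined with the scaling law \eqref{eq:heat-scaling}. The plan is to show that $\mathcal H_\beta$ is the perspective of a convex function.

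Fix two points $(x_i,y_i,r_i)$, $i=0,1$, and $\theta\in(0,1)$, and set $r_\theta=(1-\theta)r_0+\theta r_1$. Introduce the perspective weight $\mu:=\theta r_1/r_\theta\in(0,1)$, so that $1-\mu=(1-\theta)r_0/r_\theta$. The key convex-geometric identity is
\[
 \frac{x_\theta}{r_\theta}=(1-\mu)\frac{x_0}{r_0}+\mu\frac{x_1}{r_1},
\]
and likewise for $y$. Similarly $\Omega/r_\theta=(1-\mu)\,\Omega/r_0+\mu\,\Omega/r_1$; this holds because $\Omega$ is convex, since $(1-\mu)\Omega/r_0+\mu\Omega/r_1=\frac{1}{r_\theta}\bigl((1-\theta)\Omega+\theta\Omega\bigr)=\Omega/r_\theta$.

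Write $q_i:=p_{\Omega/r_i}(1,x_i/r_i,y_i/r_i)$. By \eqref{eq:heat-scaling},
\[
 \mathcal H_\beta(x_i,y_i,r_i)=r_i^{1-\beta}\bigl(r_i^{-N}q_i\bigr)^{-\beta/N}=r_i\,q_i^{-\beta/N}.
\]
So $\mathcal H_\beta(x,y,r)=r\,F(\Omega/r,x/r,y/r)$ with $F(D,\xi,\eta)=p_D(1,\xi,\eta)^{-\beta/N}$. Apply \cref{lem:heat-domain-Minkowski} with $D_0=\Omega/r_0$, $D_1=\Omega/r_1$, weight $\mu$ and $t=1$:
\[
 q_\theta\ge q_0^{1-\mu}q_1^{\mu},
 \qquad\text{hence}\qquad
 q_\theta^{-\beta/N}\le \bigl(q_0^{-\beta/N}\bigr)^{1-\mu}\bigl(q_1^{-\beta/N}\bigr)^{\mu}\le(1-\mu)q_0^{-\beta/N}+\mu q_1^{-\beta/N},
\]
where the last step is AM--GM. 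Multiplying by $r_\theta$ and using $r_\theta(1-\mu)=(1-\theta)r_0$ and $r_\theta\mu=\theta r_1$ gives
\[
 \mathcal H_\beta(x_\theta,y_\theta,r_\theta)\le(1-\theta)\mathcal H_\beta(x_0,y_0,r_0)+\theta\mathcal H_\beta(x_1,y_1,r_1),
\]
which is exactly joint convexity.

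Two details need checking. First, the kernel must be positive and finite, so that the negative power makes sense. This holds because $p_\Omega>0$ on the connected set $\Omega$; the case $q_i=0$ does not occur, and even if it did, the convention $+\infty$ would keep the inequality valid. Second, the steps where care is required are the Minkowski-sum identity for the scaled domains and the matching of the exponent $1-\beta$ in the scaling. Neither is a real obstacle. The observation that makes the argument work is that the perspective reweighting $\mu$ turns a log-concavity statement at fixed time $t=1$ into convexity in the scale variable $r$.
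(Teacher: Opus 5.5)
Your proposal is correct and follows the paper's proof essentially verbatim. The paper uses the same reweighting $\alpha_0=(1-\theta)r_0/r_\theta$, $\alpha_1=\theta r_1/r_\theta$ (your $1-\mu,\mu$), the Minkowski identity $\alpha_0\Omega/r_0+\alpha_1\Omega/r_1=\Omega/r_\theta$, \cref{lem:heat-domain-Minkowski} at $t=1$, the scaling law giving $\mathcal H_\beta(x_i,y_i,r_i)=r_iq_i^{-\beta/N}$, and weighted AM--GM, followed by multiplication by $r_\theta$.
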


\begin{proof}
Let
\[
 r_\theta=(1-\theta)r_0+\theta r_1,
\qquad
 \alpha_0=\frac{(1-\theta)r_0}{r_\theta},
\qquad
 \alpha_1=\frac{\theta r_1}{r_\theta}.
\]
Then $\alpha_0+\alpha_1=1$.  With $D_i=\Omega/r_i$, convexity of $\Omega$
gives the exact identity
\[
 \alpha_0D_0+\alpha_1D_1=\Omega/r_\theta=:D_\theta,
\]
and similarly
\[
 \frac{x_\theta}{r_\theta}
 =\alpha_0\frac{x_0}{r_0}+\alpha_1\frac{x_1}{r_1},
\qquad
 \frac{y_\theta}{r_\theta}
 =\alpha_0\frac{y_0}{r_0}+\alpha_1\frac{y_1}{r_1}.
\]
Set
\[
 q_i=p_{D_i}\!\left(1,\frac{x_i}{r_i},\frac{y_i}{r_i}\right).
\]
By \cref{lem:heat-domain-Minkowski},
$q_\theta\ge q_0^{\alpha_0}q_1^{\alpha_1}$.  Using
\eqref{eq:heat-scaling},
\[
 \mathcal H_\beta(x_i,y_i,r_i)=r_iq_i^{-\beta/N}.
\]
Hence
\[
 q_\theta^{-\beta/N}
 \le(q_0^{-\beta/N})^{\alpha_0}(q_1^{-\beta/N})^{\alpha_1}
 \le\alpha_0q_0^{-\beta/N}+\alpha_1q_1^{-\beta/N},
\]
where the last step is weighted AM--GM\@.  Multiplication by $r_\theta$ yields
\[
 \mathcal H_\beta(x_\theta,y_\theta,r_\theta)
 \le(1-\theta)\mathcal H_\beta(x_0,y_0,r_0)
   +\theta\mathcal H_\beta(x_1,y_1,r_1).
\]
\end{proof}

\subsection{Power means and the Borell--Brascamp--Lieb inequality}
\label{sec:power-means-BBL}

We fix the notation for generalized power means and recall the form of the
Borell--Brascamp--Lieb inequality used both here and in
\cref{sec:real-order-doubling}.  Let $0<\theta<1$ and $a,b\ge0$.  For
$q\in\R$ define
\begin{equation}\label{eq:power-mean-definition}
 M_q(a,b;\theta):=
 \begin{cases}
 \bigl((1-\theta)a^q+\theta b^q\bigr)^{1/q},
   &q>0,\quad a,b\ge0,\\[1mm]
 a^{1-\theta}b^\theta,
   &q=0,\quad a,b\ge0,\\[1mm]
 \bigl((1-\theta)a^q+\theta b^q\bigr)^{1/q},
   &q<0,\quad a,b>0.
 \end{cases}
\end{equation}
For $q<0$ we adopt the continuous extension
\begin{equation}\label{eq:negative-mean-zero-convention}
 M_q(a,b;\theta)=0
 \qquad\text{if }ab=0.
\end{equation}
For every fixed $q$, the map $(a,b)\mapsto M_q(a,b;\theta)$ is
nondecreasing in each variable and positively homogeneous:
\begin{equation}\label{eq:power-mean-homogeneous}
 M_q(ca,cb;\theta)=cM_q(a,b;\theta),
 \qquad c\ge0.
\end{equation}

\begin{lemma}
\label{lem:BBL-general}
Let $d\ge1$, let $q>-1/d$, and let
$f_0,f_1,f_\theta:\R^d\to[0,\infty)$ be measurable integrable functions,
with $0<\int_{\R^d}f_i<\infty$ for $i=0,1$.  Assume that
\begin{equation}\label{eq:BBL-pointwise-general}
 f_\theta((1-\theta)x_0+\theta x_1)
 \ge M_q(f_0(x_0),f_1(x_1);\theta)
 \qquad\text{for all }x_0,x_1\in\R^d.
\end{equation}
Then
\begin{equation}\label{eq:BBL-integral-general}
 \int_{\R^d}f_\theta
 \ge
 M_{q/(1+dq)}\!\left(
   \int_{\R^d}f_0,
   \int_{\R^d}f_1;\theta
 \right).
\end{equation}
\end{lemma}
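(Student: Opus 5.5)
The plan is the standard proof of the Borell--Brascamp--Lieb inequality: first establish the one-dimensional case by a transport (or level-set) argument, then lift it to arbitrary $d$ by induction on dimension using Fubini. Since the paper only needs this as a classical tool, one could simply cite Borell and Brascamp--Lieb. The self-contained route I would follow is described below.

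\emph{Normalization.} First reduce to bounded functions with bounded support by monotone truncation. Replace $f_i$ by $\min(f_i,n)\mathbf 1_{B_n}$ for $i=0,1$. The hypothesis \eqref{eq:BBL-pointwise-general} is preserved, because $M_q$ is nondecreasing in each variable. Monotone convergence on the right side of \eqref{eq:BBL-integral-general}, together with continuity of $M_{q/(1+dq)}$ on $(0,\infty)^2$, then recovers the general case.

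\emph{Case $d=1$.} Set $F_i=\int f_i$ and $\|f_i\|_\infty=m_i>0$, and use the layer-cake formula. For $\lambda$ in the appropriate range, the superlevel sets satisfy
\[
\{f_\theta> M_q(\lambda_0,\lambda_1;\theta)\}\supset(1-\theta)\{f_0>\lambda_0\}+\theta\{f_1>\lambda_1\}.
\]
Pair the levels by $\lambda_0=m_0u$ and $\lambda_1=m_1u$ for $u\in(0,1)$. The one-dimensional Brunn--Minkowski inequality $|A+B|\ge|A|+|B|$ for nonempty sets then gives
\[
\int f_\theta\ge M_q(m_0,m_1;\theta)\Bigl[(1-\theta)\frac{F_0}{m_0}+\theta\frac{F_1}{m_1}\Bigr].
\]
Nonemptiness of the superlevel sets for $u<1$ is guaranteed by the choice $m_i=\|f_i\|_\infty$. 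The remaining step is the elementary inequality
\[
M_q(m_0,m_1;\theta)\,\bigl[(1-\theta)F_0/m_0+\theta F_1/m_1\bigr]\ge M_{q/(1+q)}(F_0,F_1;\theta),
\]
which is a Hölder-type inequality for power means: $M_a(x_0,x_1)M_b(y_0,y_1)\ge M_c(x_0y_0,x_1y_1)$ with $1/c=1/a+1/b$, applied with $a=q$, $b=1$. For $q<0$ one needs $q>-1$, so that $c=q/(1+q)$ has the correct sign. Cases where some $m_i$ or $F_i$ vanishes are handled by the zero convention \eqref{eq:negative-mean-zero-convention}.

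\emph{Induction on $d$.} Write $x=(x',t)\in\R^{d-1}\times\R$ and define $F_i(t)=\int f_i(x',t)\,\dd x'$. For fixed $t_0,t_1$, the slices satisfy the $(d-1)$-dimensional hypothesis with the same $q$, since $q>-1/d>-1/(d-1)$. The induction hypothesis therefore gives
\[
F_\theta((1-\theta)t_0+\theta t_1)\ge M_{q'}(F_0(t_0),F_1(t_1);\theta),\qquad q'=\frac{q}{1+(d-1)q}.
\]
A direct check shows $q'>-1$ exactly when $q>-1/d$. Applying the one-dimensional case to $F_0,F_1,F_\theta$ yields the exponent
\[
\frac{q'}{1+q'}=\frac{q}{1+dq},
\]
which is the claim. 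Slices on which some $F_i(t)=0$ are covered by the convention that $M_q=0$ when one argument vanishes for $q\le0$. When $q>0$ and one argument vanishes, the mean is positive, but the inequality still follows directly from monotonicity. Measurability of $F_\theta$ is a minor point: I would state the result for the outer integral, or assume $f_\theta$ is measurable, as given.

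\emph{Main obstacle.} The delicate step is the one-dimensional case: the level-set pairing combined with the power-mean Hölder inequality, together with the bookkeeping of degenerate cases where an $f_i$ has empty superlevel sets or some means vanish. I would handle these by restricting throughout to $u<1$ and to the essential supremum normalization, which keeps every relevant set nonempty.
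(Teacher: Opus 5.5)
Your proof is correct, but it is not what the paper does. The paper gives no argument for this lemma: it quotes the Borell--Brascamp--Lieb inequality from Brascamp--Lieb and records it only to fix the exponent $q/(1+dq)$ and the zero convention \eqref{eq:negative-mean-zero-convention}.

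You instead reproduce the standard self-contained proof:
\begin{itemize}
\item the one-dimensional case via the paired level sets $\{f_i>m_iu\}$ and $|(1-\theta)A+\theta B|\ge(1-\theta)|A|+\theta|B|$;
\item the power-mean H\"older inequality;
\item Fubini induction with $q'=q/(1+(d-1)q)$.
\end{itemize}
The steps check out. For $-1<q<0$, the H\"older step is ordinary H\"older with conjugate exponents $1/(1+q)$ and $-1/q$, applied to
\[
m_i^q=\bigl((m_iy_i)^{q/(1+q)}\bigr)^{1+q}y_i^{-q}.
\]
The hypothesis $q>-1/d$ enters exactly as $q'>-1$ in the final one-dimensional step. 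The citation buys brevity and hands the measure-theoretic fine print to the literature. Your route shows why the thresholds used later are the admissibility conditions: $q_s>-1$ in the proof of Theorem~\ref{thm:spectral-main}, and $-1/(2m)>-1/N$, i.e.\ $N>4\sigma$, in Proposition~\ref{prop:real-J-convex}.

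Two small corrections.

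First, when a slice integral vanishes and $q'>0$, the needed bound does not follow from monotonicity alone. You need the change of variables $x'=(1-\theta)x_0'+\theta x_1'$ at fixed $x_1'$, which gives
\[
F_\theta(t_\theta)\ge(1-\theta)^{1/q+d-1}F_0(t_0)=(1-\theta)^{1/q'}F_0(t_0).
\]
In fact, with the paper's convention $M_q(a,0;\theta)=(1-\theta)^{1/q}a$ for $q>0$, the pointwise hypothesis forces $f_\theta\ge(1-\theta)^{1/q}f_0(x_0)>0$ on all of $\R^d$. That contradicts integrability, so the case $q>0$ is vacuous. Only $-1/d<q\le0$ matters, and that is all the paper uses.

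Second, on measurability: $(1-\theta)A+\theta B$ need not be measurable, so use compact inner approximations of $A$ and $B$. The issue of non-measurable sections disappears if you replace $f_0,f_1$ by Borel minorants and $f_\theta$ by a Borel majorant that agree with them a.e. The pointwise hypothesis survives by monotonicity of $M_q$, and the integrals are unchanged.
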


\begin{proof}
This is the Borell--Brascamp--Lieb inequality in the range $q>-1/d$;
see \cite{BrascampLieb}.  The convention
\eqref{eq:negative-mean-zero-convention} is the standard one when $q<0$.
We state the result explicitly to fix the exponent and the zero-value
convention used below.
\end{proof}

The following elementary lemma will be used in the analysis of the truncated
energies.

\begin{lemma}
\label{lem:negative-mean-truncation}
Let $q<0$, $L>0$, and $T_L(a):=\min\{a,L\}$ for $a\ge0$.  Then
\begin{equation}\label{eq:truncation-power-mean}
 M_q(T_L(a),T_L(b);\theta)
 \le T_L(M_q(a,b;\theta))
 \qquad\text{for all }a,b\ge0.
\end{equation}
Moreover,
\begin{equation}\label{eq:power-mean-square}
 M_q(a,b;\theta)^2=M_{q/2}(a^2,b^2;\theta).
\end{equation}
Consequently, if $c\ge M_q(a,b;\theta)$, then
\begin{equation}\label{eq:truncated-mean-consequence}
 T_L(c)^2
 \ge M_{q/2}(T_L(a)^2,T_L(b)^2;\theta).
\end{equation}
\end{lemma}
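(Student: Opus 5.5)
The three claims are elementary, and I will prove them in order, using only the definition \eqref{eq:power-mean-definition}, the convention \eqref{eq:negative-mean-zero-convention}, and the monotonicity and homogeneity of $M_q$ noted above.

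For \eqref{eq:truncation-power-mean}, I first dispose of the case $ab=0$. Then $T_L(a)T_L(b)=0$, so both sides vanish by \eqref{eq:negative-mean-zero-convention}. Now let $a,b>0$. Two inequalities are needed:
\begin{itemize}
\item Since $T_L(a)\le a$, $T_L(b)\le b$ and $M_q$ is nondecreasing in each variable, $M_q(T_L(a),T_L(b);\theta)\le M_q(a,b;\theta)$.
\item Since $T_L(a),T_L(b)\le L$, the same monotonicity gives $M_q(T_L(a),T_L(b);\theta)\le M_q(L,L;\theta)$. Homogeneity \eqref{eq:power-mean-homogeneous} together with $M_q(1,1;\theta)=1$ turns the right-hand side into $L$.
\end{itemize}
Taking the minimum of the two bounds gives $T_L(M_q(a,b;\theta))$, which is \eqref{eq:truncation-power-mean}.

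For \eqref{eq:power-mean-square} I check the cases directly. If $a,b>0$, then
\[
 \bigl((1-\theta)a^q+\theta b^q\bigr)^{2/q}
 =\bigl((1-\theta)(a^2)^{q/2}+\theta(b^2)^{q/2}\bigr)^{1/(q/2)},
\]
which is exactly $M_{q/2}(a^2,b^2;\theta)$. If $ab=0$, then $a^2b^2=0$ as well, and since $q/2<0$ both sides are zero by \eqref{eq:negative-mean-zero-convention}.

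For \eqref{eq:truncated-mean-consequence}, suppose $c\ge M_q(a,b;\theta)$. Because $T_L$ is nondecreasing, $T_L(c)\ge T_L(M_q(a,b;\theta))$. By \eqref{eq:truncation-power-mean} this is at least $M_q(T_L(a),T_L(b);\theta)$. All the quantities involved are nonnegative, so I may square the inequality. Then applying \eqref{eq:power-mean-square} with $(a,b)$ replaced by $(T_L(a),T_L(b))$ gives the claim.

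The only point needing care is the zero convention in the negative-exponent case. The monotonicity used above must remain valid when an argument is $0$. This holds because $M_q\ge0$ always, and $M_q$ equals $0$ exactly when one of its arguments vanishes. I expect no genuine obstacle.
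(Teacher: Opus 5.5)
Your proposal is correct and follows the paper's proof essentially step for step. The steps are: the zero case via the convention \eqref{eq:negative-mean-zero-convention}; the two bounds $M_q(T_L(a),T_L(b);\theta)\le M_q(a,b;\theta)$ and $M_q(T_L(a),T_L(b);\theta)\le L$; the direct check of \eqref{eq:power-mean-square}; and the chain monotonicity of $T_L$, then truncation, then squaring for \eqref{eq:truncated-mean-consequence}. The only cosmetic difference is that you get the two bounds from the stated monotonicity and homogeneity of $M_q$, while the paper compares $q$-th powers directly.
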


\begin{proof}
If $ab=0$, then the left-hand side of
\eqref{eq:truncation-power-mean} is zero by
\eqref{eq:negative-mean-zero-convention}, so the assertion is immediate.
Assume therefore that $a,b>0$.  Since $T_L(a)\le a$ and $T_L(b)\le b$ while
$q<0$,
\[
 T_L(a)^q\ge a^q,
 \qquad
 T_L(b)^q\ge b^q.
\]
Because $1/q<0$, raising the corresponding weighted-sum inequality to the
power $1/q$ reverses the inequality and gives
\[
 M_q(T_L(a),T_L(b);\theta)\le M_q(a,b;\theta).
\]
On the other hand, $T_L(a),T_L(b)\le L$, hence
$T_L(a)^q,T_L(b)^q\ge L^q$, and therefore
\[
 M_q(T_L(a),T_L(b);\theta)\le L.
\]
Combining the last two inequalities proves
\eqref{eq:truncation-power-mean}.  Identity
\eqref{eq:power-mean-square} follows directly from the definition:
\[
 M_{q/2}(a^2,b^2;\theta)
 =\bigl((1-\theta)a^q+\theta b^q\bigr)^{2/q}
 =M_q(a,b;\theta)^2,
\]
with cases involving zero interpreted according to
\eqref{eq:negative-mean-zero-convention}.  Finally, if
$c\ge M_q(a,b;\theta)$, monotonicity of $T_L$, followed by
\eqref{eq:truncation-power-mean} and
\eqref{eq:power-mean-square}, yields
\[
 T_L(c)
 \ge T_L(M_q(a,b;\theta))
 \ge M_q(T_L(a),T_L(b);\theta),
\]
and squaring proves \eqref{eq:truncated-mean-consequence}.
\end{proof}

\subsection{One-dimensional Borell--Brascamp--Lieb integration}

Fix $s>0$ and assume $N>2s$.  Put
\[
 D_s=N-2s+1,
\qquad
 F_s(x,y,r)=r^{2s-1}p_\Omega(r^2,x,y).
\]

\begin{lemma}\label{lem:Fs-power}
Let $D_s=N-2s+1$ and
$F_s(x,y,r)=r^{2s-1}p_\Omega(r^2,x,y)$ as above.  Then
$F_s^{-1/D_s}$ is jointly convex on
$\Omega\times\Omega\times(0,\infty)$.  Equivalently, $F_s$ is
$q_s$-concave there, with
\[
 q_s=-\frac1{N-2s+1}\in(-1,0).
\]
\end{lemma}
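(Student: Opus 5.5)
The plan is to get the statement directly from \cref{prop:Hbeta} by choosing the exponent $\beta$ correctly. Both sides are products of a power of $r$ and a power of the Dirichlet heat kernel at time $r^2$, so I only need to match exponents.

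First, expand the target function. For $(x,y,r)\in\Omega\times\Omega\times(0,\infty)$, \cref{lem:Ks-basic} gives $p_\Omega(r^2,x,y)>0$, and
\[
 F_s(x,y,r)^{-1/D_s}
 = r^{-(2s-1)/D_s}\,p_\Omega(r^2,x,y)^{-1/D_s}.
\]
Compare this with $\mathcal H_\beta(x,y,r)=r^{1-\beta}p_\Omega(r^2,x,y)^{-\beta/N}$. Matching the heat-kernel exponent forces $\beta/N=1/D_s$, that is,
\[
 \beta=\frac{N}{N-2s+1}.
\]
This $\beta$ is positive because $N>2s$ gives $D_s>1>0$. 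With this choice the $r$-exponent also agrees:
\[
 1-\beta=\frac{(N-2s+1)-N}{D_s}=\frac{1-2s}{D_s}=-\frac{2s-1}{D_s}.
\]
Hence $F_s^{-1/D_s}=\mathcal H_\beta$ identically on $\Omega\times\Omega\times(0,\infty)$. \cref{prop:Hbeta} then gives joint convexity.

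For the equivalent formulation, set $q_s=-1/D_s$. Since $D_s>1$, we have $q_s\in(-1,0)$. For a negative exponent $q$, the usual convention is that a positive function $F$ is $q$-concave when $F^{q}$ is convex. Equivalently, $F(\xi_\theta)\ge M_q(F(\xi_0),F(\xi_1);\theta)$ along segments, in the notation \eqref{eq:power-mean-definition}. Indeed, raising $F(\xi_\theta)^q\le(1-\theta)F(\xi_0)^q+\theta F(\xi_1)^q$ to the power $1/q<0$ reverses the inequality. This is exactly the form needed as the hypothesis of \cref{lem:BBL-general} in the subsequent one-dimensional integration in $r$.

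There is essentially no obstacle here. The only points needing care are the exponent bookkeeping above and the remark that $\beta>0$, so that \cref{prop:Hbeta} applies.
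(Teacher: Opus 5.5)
Your proposal is correct and is the paper's proof: take $\beta_s=N/D_s$, observe that $\beta_s/N=1/D_s$ and $1-\beta_s=(1-2s)/D_s$, so that $\mathcal H_{\beta_s}=F_s^{-1/D_s}$, and then apply \cref{prop:Hbeta}. Your remarks that $\beta_s>0$, that $q_s\in(-1,0)$ because $D_s>1$, and that the power-mean form follows from the convexity of $F_s^{q_s}$ are the routine checks the paper leaves implicit.
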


\begin{proof}
Choose
\[
 \beta_s=\frac{N}{N-2s+1}=\frac{N}{D_s}.
\]
Then
\[
 1-\beta_s=\frac{1-2s}{D_s},
\qquad
 \frac{\beta_s}{N}=\frac1{D_s},
\]
and therefore
\[
 \mathcal H_{\beta_s}(x,y,r)
 =r^{(1-2s)/D_s}p_\Omega(r^2,x,y)^{-1/D_s}
 =F_s(x,y,r)^{-1/D_s}.
\]
Apply \cref{prop:Hbeta}.
\end{proof}

\begin{proof}[Proof of \cref{thm:spectral-main}]
Consider first the case in which the two endpoints and the
interpolated point are all off the diagonal.  Thus let
$(x_i,y_i)\in\Omega\times\Omega$, $x_i\ne y_i$ for $i=0,1$, and assume also
$x_\theta\ne y_\theta$.  In this situation
\cref{lem:Ks-basic} guarantees that the three integrals defining the inverse spectral kernels below are finite.

Apply \cref{lem:Fs-power} to the one-dimensional variable $r$ and extend
$r\mapsto F_s(x,y,r)$ by zero to the whole real line.  We verify explicitly
that the pointwise hypothesis of \cref{lem:BBL-general} is preserved by this
extension.  If both input radii $r_0,r_1$ are positive, then
$r_\theta=(1-\theta)r_0+\theta r_1>0$ and
\cref{lem:Fs-power} gives
\[
 F_s(x_\theta,y_\theta,r_\theta)
 \ge M_{q_s}(F_s(x_0,y_0,r_0),F_s(x_1,y_1,r_1);\theta).
\]
If at least one of $r_0,r_1$ is nonpositive, then the corresponding extended
endpoint value is zero; since $q_s<0$, the right-hand side is zero by
\eqref{eq:negative-mean-zero-convention}, whereas the left-hand side is
nonnegative.  Thus the BBL pointwise hypothesis holds on all of $\R$.

Because $N>2s$, one has $q_s=-1/(N-2s+1)>-1$.  Moreover, for the off-diagonal
points under consideration, \cref{lem:Ks-basic} implies that the extended
functions have finite positive integrals.  Applying
\cref{lem:BBL-general} with $d=1$ therefore yields
\[
 \int_0^\infty F_s(x_\theta,y_\theta,r)\,\dd r
 \ge
 M_{q_s/(1+q_s)}\!\left(
 \int_0^\infty F_s(x_0,y_0,r)\,\dd r,
 \int_0^\infty F_s(x_1,y_1,r)\,\dd r;\theta\right).
\]
Now
\[
 \frac{q_s}{1+q_s}=-\frac1{N-2s},
\]
and the substitution $t=r^2$, together with the positive homogeneity
\eqref{eq:power-mean-homogeneous}, gives
\[
 K_{s,\Omega}(x,y)
 =\frac2{\Gamma(s)}\int_0^\infty
 r^{2s-1}p_\Omega(r^2,x,y)\,\dd r.
\]
Hence, whenever the endpoints and interpolated point are off the diagonal,
\begin{equation}\label{eq:spectral-power-mean}
 K_{s,\Omega}(x_\theta,y_\theta)
 \ge
 M_{-1/(N-2s)}\!\left(
 K_{s,\Omega}(x_0,y_0),K_{s,\Omega}(x_1,y_1);\theta\right).
\end{equation}
Since the exponent is negative, \eqref{eq:spectral-power-mean} is equivalent
to
\begin{equation}\label{eq:spectral-convex-offdiag}
 \Psi_{s,\Omega}(x_\theta,y_\theta)
 \le (1-\theta)\Psi_{s,\Omega}(x_0,y_0)
    +\theta\Psi_{s,\Omega}(x_1,y_1).
\end{equation}

It remains to remove the off-diagonal restrictions without applying the
Borell--Brascamp--Lieb inequality to the divergent diagonal integral.  By \cref{lem:Ks-basic}, $\Psi_{s,\Omega}$ defined in
\eqref{eq:spectral-convex-transform} is continuous on $\Omega\times\Omega$.  If the
interpolated point satisfies $x_\theta=y_\theta$, then
\eqref{eq:spectral-convex-offdiag} with the left-hand side interpreted through
\eqref{eq:spectral-convex-transform} is immediate, since that left-hand side
is $0$ and the right-hand side is nonnegative.  If one of the endpoints lies
on the diagonal, approximate that endpoint by off-diagonal points in
$\Omega\times\Omega$.  For each approximating pair, either the corresponding
interpolated point is on the diagonal, in which case the convexity inequality
is immediate as noted above, or it is off the diagonal, in which case
\eqref{eq:spectral-convex-offdiag} applies.  Passing to the limit by continuity
of $\Psi_{s,\Omega}$ gives the required inequality for the original
endpoints.  Thus $\Psi_{s,\Omega}$ is convex on all of
$\Omega\times\Omega$.
\end{proof}

\begin{corollary}
\label{cor:laplace-green-power-concavity}
Assume $N\ge3$.  Let $(x_i,y_i)\in\Omega\times\Omega$ with
$x_i\ne y_i$ for $i=0,1$, let $0<\theta<1$, and set
\[
 x_\theta=(1-\theta)x_0+\theta x_1,
 \qquad
 y_\theta=(1-\theta)y_0+\theta y_1.
\]
If $x_\theta\ne y_\theta$, then
\[
 G_{1,\Omega}(x_\theta,y_\theta)
 \ge
 M_{-1/(N-2)}\!\left(
 G_{1,\Omega}(x_0,y_0),G_{1,\Omega}(x_1,y_1);\theta\right).
\]
Equivalently, the inverse-power transform in \cref{thm:spectral-main},
extended by $0$ on the diagonal, is convex on $\Omega\times\Omega$.
\end{corollary}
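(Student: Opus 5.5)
This corollary is the specialization $s=1$ of \cref{thm:spectral-main}. The plan is to identify $K_{1,\Omega}$ with the classical Dirichlet Green function $G_{1,\Omega}$ and then read off the power-mean inequality \eqref{eq:spectral-power-mean}.

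\textbf{Step 1: identify the kernel.} For $s=1$ the kernel $K_{1,\Omega}$ is the Schwartz kernel of $A^{-1}=(-\Delta_D)^{-1}$, and by \eqref{eq:Ks-mellin-definition-intro} it equals $\int_0^\infty p_\Omega(t,x,y)\,\dd t$. The Green function of the Dirichlet Laplacian is, by definition, the integral kernel of the inverse Friedrichs realization. Hence $G_{1,\Omega}=K_{1,\Omega}$ off the diagonal, because both are continuous there (\cref{lem:Ks-basic}) and define the same operator.

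As a consistency check, the normalization matches the decomposition in the introduction: $a_{N,1}=\Gamma(\frac N2-1)/(4\pi^{N/2})$ equals $1/((N-2)|\mathbb S^{N-1}|)$, since $|\mathbb S^{N-1}|=2\pi^{N/2}/\Gamma(N/2)$ and $\Gamma(N/2)=(\frac N2-1)\Gamma(\frac N2-1)$. The hypothesis $N\ge3$ is exactly $N>2s$ for $s=1$.

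\textbf{Step 2: apply the off-diagonal inequality.} Under the stated hypotheses $x_i\ne y_i$ and $x_\theta\ne y_\theta$, we are in the first case of the proof of \cref{thm:spectral-main}. There, \eqref{eq:spectral-power-mean} was derived with exponent $-1/(N-2s)=-1/(N-2)$, which gives the displayed inequality directly.

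\textbf{Step 3: the equivalent convex form.} Because the exponent is negative, the power-mean inequality is equivalent to \eqref{eq:spectral-convex-offdiag}. Convexity on all of $\Omega\times\Omega$, with the transform extended by $0$ on the diagonal, is then exactly the statement of \cref{thm:spectral-main} for $s=1$.

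\textbf{Main obstacle.} The only point needing care is Step 1: making sure the spectral kernel $K_{1,\Omega}$ coincides with the classical Green function on an arbitrary bounded convex domain, with no assumption of boundary regularity. I would handle this through the operator identity $A^{-1}$ together with continuity off the diagonal, rather than through any boundary-value characterization.
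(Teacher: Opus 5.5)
Your proposal is correct and is essentially the paper's own route: the corollary is the specialization $s=1$ of \cref{thm:spectral-main}, where $K_{1,\Omega}$ is the kernel of $A^{-1}$, i.e.\ the Dirichlet Green function. The displayed inequality is then \eqref{eq:spectral-power-mean} with exponent $-1/(N-2)$, and the convex form is \eqref{eq:spectral-convex-offdiag} extended across the diagonal as in the theorem. Your normalization check $a_{N,1}=1/((N-2)|\mathbb S^{N-1}|)$ is correct, though not actually needed here, since the inequality involves only $G_{1,\Omega}$ itself.
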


\begin{corollary}
\label{cor:polyharmonic-green-power-concavity}
Let $\ell\in\mathbb N$ and $N>2\ell$.  Let
$(x_i,y_i)\in\Omega\times\Omega$ with $x_i\ne y_i$ for $i=0,1$,
let $0<\theta<1$, and set
\[
 x_\theta=(1-\theta)x_0+\theta x_1,
 \qquad
 y_\theta=(1-\theta)y_0+\theta y_1.
\]
If $x_\theta\ne y_\theta$, then
\[
 G_{\ell,\Omega}(x_\theta,y_\theta)
 \ge
 M_{-1/(N-2\ell)}\!\left(
 G_{\ell,\Omega}(x_0,y_0),G_{\ell,\Omega}(x_1,y_1);\theta\right).
\]
Equivalently, the transform $G_{\ell,\Omega}^{-1/(N-2\ell)}$, extended by
$0$ on the diagonal, is jointly convex on $\Omega\times\Omega$.
\end{corollary}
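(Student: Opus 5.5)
This corollary is the case $s=\ell\in\mathbb N$ of \cref{thm:spectral-main}, once $G_{\ell,\Omega}$ is identified with $K_{\ell,\Omega}$. The plan is therefore:
\begin{enumerate}
\item identify the two kernels;
\item quote inequality \eqref{eq:spectral-power-mean} from the proof of \cref{thm:spectral-main} with $s=\ell$;
\item read off the equivalent convexity statement.
\end{enumerate}

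\textbf{Step 1 (identification).} By \cref{def:spectral-robin-real}, the Navier polyharmonic Green kernel is by definition the Schwartz kernel $K_{\ell,\Omega}$ of $A^{-\ell}=(-\Delta_D)^{-\ell}$. By the semigroup identity \eqref{eq:spectral-semigroup-intro}, this is the $\ell$-fold composition of the Dirichlet Green operator. Concretely, $G_{\ell,\Omega}(x,y)=\frac1{\Gamma(\ell)}\int_0^\infty t^{\ell-1}p_\Omega(t,x,y)\,\dd t$, which agrees with \eqref{eq:Ks-mellin-definition-intro}. No boundary regularity is needed, since everything is defined spectrally.

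\textbf{Step 2 (power-mean inequality).} Take $s=\ell$, which is admissible because $N>2\ell$. Inequality \eqref{eq:spectral-power-mean}, established in the proof of \cref{thm:spectral-main} under exactly the hypotheses $x_0\ne y_0$, $x_1\ne y_1$, $x_\theta\ne y_\theta$, gives the asserted inequality with exponent $-1/(N-2\ell)$. The finiteness and positivity of the three kernel values come from \cref{lem:Ks-basic}.

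\textbf{Step 3 (equivalent form).} The exponent is negative and all values are positive. Raising both sides to the power $-1/(N-2\ell)$ reverses the inequality and turns the power mean into the ordinary weighted arithmetic mean of $G_{\ell,\Omega}^{-1/(N-2\ell)}$. This gives \eqref{eq:spectral-convex-offdiag}. Joint convexity on all of $\Omega\times\Omega$, with the zero extension on the diagonal, is then exactly \cref{thm:spectral-main} for $s=\ell$; continuity at the diagonal is supplied by \cref{lem:Ks-basic}.

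The only point needing care is Step 1, namely that on a smooth domain the classical Navier Green function coincides with the spectral kernel. I would handle it by noting that $u=A^{-\ell}f$ lies in $D(A^\ell)$. Hence $A^ju\in D(A)$ for $j=0,\ldots,\ell-1$, which encodes the Navier conditions, and uniqueness of the Navier problem then identifies the two kernels.
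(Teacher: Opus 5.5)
Your proposal is correct and follows the paper's route. The corollary is \cref{thm:spectral-main} at $s=\ell$, with $G_{\ell,\Omega}=K_{\ell,\Omega}$ by the spectral definition of the Navier kernel (\cref{def:spectral-robin-real}, \cref{rem:integer-navier-interpretation}); the power-mean form is \eqref{eq:spectral-power-mean}, and the convexity form is \eqref{eq:spectral-convex-offdiag}, extended to the diagonal by the continuity from \cref{lem:Ks-basic}. Your extra identification of the classical smooth-domain Navier Green function via $A^{-\ell}f\in D(A^\ell)$ mirrors \cref{rem:integer-navier-interpretation}, but the statement itself does not need it, since the paper takes the spectral kernel as the definition.
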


\begin{remark}

For $s=1$, \cref{thm:spectral-main} is Borell's classical joint convexity
theorem for the Dirichlet Green function \cite{Borell1984}.  Borell's
subsequent work \cite{Borell1985} developed a broader Brunn--Minkowski
theory for Greenian potentials.  The heat-kernel log-concavity mechanism used
here goes back to Brascamp--Lieb
\cite{BrascampLieb}, while the modern characterization of $F$-concavities
preserved by the Dirichlet heat flow is developed in
\cite{IshigeSalaniTakatsu2024}.  For $0<s<1$, the kernel $K_{s,\Omega}$ is
the Green kernel of the spectral fractional power $A^s=(-\Delta_D)^s$ of the Dirichlet Laplacian; the
potential theory of this operator via subordinate killed Brownian motion was
studied, among others, by Song--Vondra\v{c}ek \cite{SongVondracek2003}.
\cref{thm:spectral-main} extends the exact joint inverse-power
convexity from $A^{-1}$ to the full family $A^{-s}$ by combining
Minkowski-domain interpolation, the scale-covariant family \eqref{eq:Hbeta},
and one-dimensional BBL\@.  Hence $-1/(N-2s)$ is a joint
convexity exponent for all real $s>0$ in the present setting.
\end{remark}

\section{Robin functions for arbitrary inverse spectral powers}
\label{sec:robin-transfer}

This section proves the Robin-center theorem for arbitrary real spectral orders,
\cref{thm:robin-real-main}.  Joint Green-kernel convexity is transferred to
the Robin function through a near-diagonal renormalization, while the
required diagonal regularity is obtained, for each fixed real spectral order
$s>0$ with $N>2s$, from the heat-kernel defect estimates developed below; no uniformity
of the constants with respect to $s$ is asserted.  Once
\cref{thm:robin-real-main} is established, the two PDE existence--uniqueness
statements \cref{cor:fractional-robin-center,cor:navier-robin-center} follow
immediately by specialization.  Integer orders admit an additional local
polyharmonic interpretation, but no separate regularity theory is needed.

\subsection{From Green-kernel convexity to Robin functions}

Let $s>0$, $N>2s$, and set
\begin{equation*}
 m:=N-2s>0,
 \qquad
 \Phi_s(z):=a_{N,s}|z|^{-m},
 \qquad
 a_{N,s}:=\frac{\Gamma(\frac N2-s)}{4^s\pi^{N/2}\Gamma(s)}.
\end{equation*}
For $x\ne y$ write
\begin{equation}\label{eq:general-regularpart-s}
 K_{s,\Omega}(x,y)=\Phi_s(x-y)-H_{s,\Omega}(x,y).
\end{equation}

\begin{proposition}
\label{prop:robin-transfer}
Let $\Omega\subset\R^N$ be a bounded open convex set, let $s>0$, and assume
$N>2s$.  Suppose that the off-diagonal regular part
$H_{s,\Omega}$ in \eqref{eq:general-regularpart-s} extends continuously to
$\Omega\times\Omega$, and define
\[
 R_{s,\Omega}(y):=H_{s,\Omega}(y,y).
\]
Then $R_{s,\Omega}$ is convex on $\Omega$ and satisfies
\begin{equation}\label{eq:robin-transfer-bound}
 R_{s,\Omega}(y)
 \ge a_{N,s}[2\,\dist(y,\partial\Omega)]^{2s-N},
 \qquad y\in\Omega.
\end{equation}
In particular,
\[
 R_{s,\Omega}(y)\longrightarrow+\infty
 \qquad\text{as }y\to\partial\Omega.
\]
If, in addition, $R_{s,\Omega}$ is real analytic in $\Omega$, then
$R_{s,\Omega}$ is strictly convex.  Consequently, it has exactly one
critical point, which is its unique global minimizer.
\end{proposition}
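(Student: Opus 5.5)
The plan is to read $R_{s,\Omega}$ off the translated diagonal of the convex transform $\Psi_{s,\Omega}$ from \cref{thm:spectral-main}. Fix a unit vector $e$ and small $\varepsilon>0$. Put
\[
 \psi_\varepsilon(y):=\Psi_{s,\Omega}(y+\varepsilon e,y),
\]
defined on $\Omega_\varepsilon:=\{y:\ y,\,y+\varepsilon e\in\Omega\}$, which is convex. Since $(y,y+\varepsilon e)$ depends affinely on $y$, $\psi_\varepsilon$ is convex by joint convexity of $\Psi_{s,\Omega}$. Expanding with \eqref{eq:general-regularpart-s} and $m=N-2s$,
\[
 K_{s,\Omega}(y+\varepsilon e,y)=a_{N,s}\varepsilon^{-m}\bigl(1-a_{N,s}^{-1}\varepsilon^{m}H_{s,\Omega}(y+\varepsilon e,y)\bigr),
\]
so that
\[
 \psi_\varepsilon(y)=a_{N,s}^{-1/m}\varepsilon\Bigl(1+\tfrac1m a_{N,s}^{-1}\varepsilon^m H_{s,\Omega}(y+\varepsilon e,y)+o(\varepsilon^m)\Bigr).
\]
The $o$-term is uniform on compacts, by continuity of the extended $H_{s,\Omega}$. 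Hence
\[
 \frac{m\,a_{N,s}^{1+1/m}}{\varepsilon^{1+m}}\bigl(\psi_\varepsilon(y)-a_{N,s}^{-1/m}\varepsilon\bigr)\longrightarrow R_{s,\Omega}(y)
\]
locally uniformly. Each function on the left is a positive multiple of a convex function minus a constant, so it is convex. A locally uniform limit of convex functions is convex, which gives convexity of $R_{s,\Omega}$.

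For the lower bound \eqref{eq:robin-transfer-bound}, the natural tool is domain monotonicity. Let $d=\dist(y,\partial\Omega)$ and $B=B_d(y)\subset\Omega$. Then $p_B\le p_\Omega$ by the Brownian (killed) representation, so $K_{s,B}\le K_{s,\Omega}$, and therefore $R_{s,\Omega}(y)\ge R_{s,B}(y)$. This reduces the problem to the ball centred at $y$. Two routes look available:
\begin{itemize}
 \item Use \eqref{eq:not-feeling-hunt} in $B$: every exit point $B_\tau$ lies at distance exactly $d$ from $y$.
 \item Use convexity directly. On the diagonal segment, $\Psi\le$ the value at a point pushed toward the boundary. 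Concretely, choose $z\in\partial\Omega$ with $|z-y|=d$ and the outward normal $\nu$. For $x$ near $y$, compare $K_{s,\Omega}(x,y)$ with the kernel of the half-space $\{(w-z)\cdot\nu<0\}\supset\Omega$. By monotonicity, $K_{s,\Omega}\le K_{s,\text{half}}$, and the half-space kernel is explicit by reflection: $K_{s,\text{half}}(x,y)=\Phi_s(x-y)-\Phi_s(x-y^*)$, since the heat kernel reflects. This gives $H_{s,\Omega}(x,y)\ge\Phi_s(x-y^*)$, and letting $x\to y$ with $|y-y^*|=2d$ yields exactly $a_{N,s}(2d)^{2s-N}$.
\end{itemize}
I will use the half-space reflection argument, since it produces the constant $2d$ on the nose. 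Divergence at the boundary then follows immediately.

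For strict convexity under analyticity, suppose $R:=R_{s,\Omega}$ is convex and real analytic but not strictly convex. Then $R$ is affine on some segment, hence $t\mapsto R(y_0+tv)$ is affine on an interval. By analyticity it is affine on the whole chord $\{y_0+tv\}\cap\Omega$. Along this chord $R$ stays bounded as $t$ approaches the endpoints, which lie on $\partial\Omega$ because $\Omega$ is bounded. This contradicts $R\to+\infty$ at $\partial\Omega$.

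Finally, since $R\to+\infty$ at the boundary and $R$ is continuous, a minimizer exists in $\Omega$. Strict convexity makes it the unique minimizer. Any critical point of a differentiable convex function is a global minimizer, so the critical point is unique.

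The main obstacle is the lower bound, specifically justifying the half-space comparison. The monotonicity $p_\Omega\le p_{\text{half}}$ is standard, but the half-space Mellin integral must converge. It does for $N>2s$, because the reflected kernel is dominated by $g_t$. The rest is routine.
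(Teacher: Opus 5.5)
Your proposal is correct and matches the paper's proof step for step. Convexity comes from the normalized translated-diagonal limit $\frac{m a_{N,s}^{1+1/m}}{\varepsilon^{m+1}}\bigl(\Psi_{s,\Omega}(y+\varepsilon e,y)-a_{N,s}^{-1/m}\varepsilon\bigr)\to R_{s,\Omega}(y)$. The lower bound comes from the supporting half-space $\mathcal H\supset\Omega$ with $K_{s,\mathcal H}(x,y)=\Phi_s(x-y)-\Phi_s(x-y^*)$. Strict convexity comes from analytic continuation along a chord against boundary blow-up, and the unique minimizer from compact sublevel sets.

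One caveat concerns the ball reduction you discarded: it runs the wrong way. Since $B_d(y)\subset\Omega$ gives $p_B\le p_\Omega$, hence $H_{s,B}\ge H_{s,\Omega}$ and $R_{s,\Omega}(y)\le R_{s,B}(y)$, it yields only an upper bound. Only the half-space comparison, where $\Omega\subset\mathcal H$, produces \eqref{eq:robin-transfer-bound}.
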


\begin{proof}
Fix $0\ne h\in\R^N$, put $r:=|h|$, and set
\[
 \Omega_h:=\{y\in\Omega:y+h\in\Omega\}.
\]
The set $\Omega_h$ is convex.  By \cref{thm:spectral-main},
\[
 F_s(x,y):=K_{s,\Omega}(x,y)^{-1/m}
\]
is jointly convex, hence
\[
 f_h(y):=F_s(y+h,y),\qquad y\in\Omega_h,
\]
is convex.  From \eqref{eq:general-regularpart-s},
\begin{align*}
 f_h(y)
 &=\bigl(a_{N,s}r^{-m}-H_{s,\Omega}(y+h,y)\bigr)^{-1/m}\\
 &=a_{N,s}^{-1/m}r
 \left(1-\frac{r^m}{a_{N,s}}H_{s,\Omega}(y+h,y)\right)^{-1/m}.
\end{align*}
Let $K\Subset\Omega$.  For all sufficiently small $h$, $K\subset\Omega_h$.
The continuous extension of $H_{s,\Omega}$ is uniformly bounded on a fixed
compact neighborhood of $\{(y,y):y\in K\}$, so Taylor's theorem gives,
uniformly for $y\in K$,
\begin{equation*}
 f_h(y)
 =a_{N,s}^{-1/m}r
 +\frac1m a_{N,s}^{-1-1/m}r^{m+1}H_{s,\Omega}(y+h,y)
 +O_K(r^{2m+1}).
\end{equation*}
Define
\[
 Q_h(y):=
 \frac{m a_{N,s}^{1+1/m}}{r^{m+1}}
 \left(f_h(y)-a_{N,s}^{-1/m}r\right).
\]
Then $Q_h$ is convex on $\Omega_h$ and, locally uniformly in $\Omega$,
\begin{equation*}
 Q_h(y)=H_{s,\Omega}(y+h,y)+O_K(r^m)
 \longrightarrow R_{s,\Omega}(y).
\end{equation*}
Applying the convexity inequality for $Q_h$ on any fixed segment
$[y_0,y_1]\Subset\Omega$ and passing to the limit shows that
$R_{s,\Omega}$ is convex.

For the boundary estimate, fix $y\in\Omega$ and write
$d:=\dist(y,\partial\Omega)$.  Choose
$\zeta\in\partial\Omega$ with $|y-\zeta|=d$, and let $P$ be a supporting
hyperplane to the compact convex set $\overline\Omega$ at $\zeta$.  Denote by
$\mathcal H$ the open half-space bounded by $P$ which contains $\Omega$, and
let $y^*$ be the reflection of $y$ across $P$.  Domain monotonicity of Dirichlet heat kernels gives
\[
 p_\Omega(t,x,y)\le p_{\mathcal H}(t,x,y).
\]
After integration against $t^{s-1}/\Gamma(s)$,
\[
 K_{s,\Omega}(x,y)\le K_{s,\mathcal H}(x,y).
\]
The half-space reflection formula
\[
 p_{\mathcal H}(t,x,y)=g_t(x-y)-g_t(x-y^*)
\]
and the whole-space Mellin integral yield
\[
 K_{s,\mathcal H}(x,y)=\Phi_s(x-y)-\Phi_s(x-y^*).
\]
Hence
\[
 H_{s,\Omega}(x,y)\ge \Phi_s(x-y^*).
\]
Letting $x\to y$ and using continuity of the extension gives
\[
 R_{s,\Omega}(y)\ge a_{N,s}|y-y^*|^{2s-N}.
\]
If $\delta:=\dist(y,P)$, then $|y-y^*|=2\delta$ and
$\delta\le|y-\zeta|=d$.  Since $2s-N<0$,
\[
 R_{s,\Omega}(y)
 \ge a_{N,s}(2\delta)^{2s-N}
 \ge a_{N,s}(2d)^{2s-N},
\]
which is \eqref{eq:robin-transfer-bound}.  In particular
$R_{s,\Omega}(y)\to+\infty$ as $y\to\partial\Omega$.

Assume now, in addition, that $R_{s,\Omega}$ is real analytic in $\Omega$.
Suppose that $R_{s,\Omega}$ is not strictly convex.
Then there exist $y_0\ne y_1$ and $t_0\in(0,1)$ for which equality holds in
the convexity inequality.  Setting $\eta:=y_1-y_0$ and
\[
 g(t):=R_{s,\Omega}(y_0+t\eta),
\]
one-dimensional convexity implies that $g$ is affine on $[0,1]$.
Let
\[
 I:=\{t\in\R:y_0+t\eta\in\Omega\}=(a,b).
\]
Because $\Omega$ is open, bounded and convex, $(a,b)$ is a bounded open
interval containing $[0,1]$.  The function $g$ is real analytic on $(a,b)$,
so the identity theorem for real-analytic functions implies that $g$ is
affine throughout $(a,b)$.  On the other hand, as $t\downarrow a$, the point $y_0+t\eta$ approaches
$\partial\Omega$, and the boundary estimate obtained above gives
$g(t)\to+\infty$, contradicting the finite endpoint limit of an affine
function.  Thus $R_{s,\Omega}$ is strictly convex.

Finally, fix $y_0\in\Omega$ and put $c_0:=R_{s,\Omega}(y_0)$.  The boundary
divergence established above implies that the sublevel set
\[
 \{y\in\Omega:R_{s,\Omega}(y)\le c_0\}
\]
has compact closure contained in $\Omega$.  Hence $R_{s,\Omega}$ attains a
global minimum at some interior point $y_{s,\Omega}$ and
$\nabla R_{s,\Omega}(y_{s,\Omega})=0$.  Conversely, every critical point of a
differentiable convex function is a global minimizer.  Strict convexity
therefore implies that $y_{s,\Omega}$ is the unique critical point and the
unique global minimizer.
\end{proof}

\subsection{Analytic regularity for arbitrary inverse spectral powers}
\label{sec:real-robin-regularity}

Throughout this subsection $s>0$, $N>2s$, and
$\Omega\subset\R^N$ is a bounded open convex set.  We use the Robin
function of \cref{def:spectral-robin-real} and put
\[
 q_\Omega(t,x,y):=g_t(x-y)-p_\Omega(t,x,y).
\]
We prove a regularity statement stronger than what is needed for convexity:
the regular part extends real analytically to $\Omega\times\Omega$ for
every real $s>0$ in the range $N>2s$.  In particular, it extends continuously
across the diagonal and its diagonal restriction is real analytic, which
provides the two hypotheses required in \cref{prop:robin-transfer}.

\subsubsection{Factorial Gaussian derivative bounds}

\begin{lemma}
\label{lem:gaussian-derivative-bounds}
Let $\gamma$ be a multiindex and $m:=|\gamma|$.  There is a dimensional
constant $C_0>1$ such that, for every $t>0$ and $z\in\R^N$,
\begin{equation}\label{eq:gaussian-derivative}
 |D^\gamma g_t(z)|
 \le C_0^{m+1}m^{m/2}t^{-(N+m)/2}
      \exp\!\left(-\frac{|z|^2}{8t}\right),
\end{equation}
where $0^0:=1$.  Consequently
\begin{align}
 \|D^\gamma g_t\|_{L^1(\R^N)}
 &\le C_0^{m+1}m^{m/2}t^{-m/2},
 \label{eq:gaussian-L1}\\
 \|D^\gamma g_t\|_{L^2(\R^N)}
 &\le C_0^{m+1}m^{m/2}t^{-m/2-N/4}.
 \label{eq:gaussian-L2}
\end{align}
Moreover, for every $d>0$ there are constants $A_d>1$ and $c_d>0$ such
that, whenever $0<u\le r\le1$ and $|z|\ge d$,
\begin{equation}\label{eq:gaussian-offdiag-factorial}
 |D^\gamma g_u(z)|
 \le A_d^{m+1}m!\,e^{-c_d/r}.
\end{equation}
\end{lemma}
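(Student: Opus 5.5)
The approach is to reduce everything to the one-dimensional Hermite structure of the Gaussian by scaling. Write $g_t(z)=t^{-N/2}g_1(z/\sqrt t)$, so that
\[
 D^\gamma g_t(z)=t^{-(N+m)/2}(D^\gamma g_1)(z/\sqrt t).
\]
Hence \eqref{eq:gaussian-derivative} reduces to the case $t=1$ with $\exp(-|w|^2/8)$ on the right. Since $g_1(w)=(4\pi)^{-N/2}\prod_i e^{-w_i^2/4}$ factorizes, we get
\[
 D^\gamma g_1(w)=(4\pi)^{-N/2}\prod_i \partial^{\gamma_i}e^{-w_i^2/4}.
\]
In one variable, $\partial^k e^{-w^2/4}=(-1/2)^k\,\mathrm{He}_k$-type Hermite polynomial times $e^{-w^2/4}$. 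I would bound it by the Cauchy integral formula. The function $\zeta\mapsto e^{-\zeta^2/4}$ is entire, so on the circle $|\zeta-w|=\rho$ we have $|e^{-\zeta^2/4}|\le e^{-w^2/4+|w|\rho/2+\rho^2/4}$. Using $|w|\rho/2\le w^2/8+\rho^2/2$, this gives
\[
 |\partial^k e^{-w^2/4}|\le k!\,\rho^{-k}e^{-w^2/8}e^{3\rho^2/4}.
\]
Choosing $\rho=\sqrt k$ (for $k\ge1$) gives the bound $k!\,k^{-k/2}e^{3k/4}e^{-w^2/8}\le C^{k+1}k^{k/2}e^{-w^2/8}$ by Stirling. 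Multiplying over coordinates and using $\prod_i\gamma_i^{\gamma_i/2}\le m^{m/2}$ gives \eqref{eq:gaussian-derivative}. This Cauchy-estimate step is the one needing care, and the only real obstacle is tracking that the constant is geometric in $m$ with the $m^{m/2}$ growth, which the choice $\rho=\sqrt k$ handles.

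The $L^1$ and $L^2$ bounds then follow by integrating \eqref{eq:gaussian-derivative}. We have $\int t^{-N/2}e^{-|z|^2/(8t)}\,\dd z=(8\pi)^{N/2}$, and
\[
 \Bigl(\int t^{-N}e^{-|z|^2/(4t)}\,\dd z\Bigr)^{1/2}=(4\pi)^{N/4}t^{-N/4}.
\]
The dimensional factors are absorbed by enlarging $C_0$.

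For \eqref{eq:gaussian-offdiag-factorial}, take $|z|\ge d$ and $0<u\le r\le1$. By \eqref{eq:gaussian-derivative},
\[
 |D^\gamma g_u(z)|\le C_0^{m+1}m^{m/2}u^{-(N+m)/2}e^{-d^2/(16u)}e^{-d^2/(16u)}.
\]
The second exponential is at most $e^{-d^2/(16r)}$, which gives $c_d=d^2/16$. For the first, maximize $u^{-a}e^{-d^2/(16u)}$ over $u>0$ with $a=(N+m)/2$. The maximum is $(16a/(ed^2))^{a}$, which is bounded by $B_d^{m+1}\,m^{m/2}$ after absorbing the $N$-dependence; for $m=0$ it is a constant. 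Then $m^{m/2}\cdot m^{m/2}=m^m\le e^m m!$, which yields $A_d^{m+1}m!\,e^{-c_d/r}$ with $A_d=e\,C_0B_d$ enlarged as needed.
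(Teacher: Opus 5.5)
Your proof is correct and follows essentially the paper's argument: a Cauchy estimate on the one-dimensional Gaussian factors with radius $\rho\asymp\sqrt k$ plus Stirling, integration for the $L^1$ and $L^2$ bounds, and for \eqref{eq:gaussian-offdiag-factorial} the same splitting $e^{-d^2/(8u)}=e^{-d^2/(16u)}e^{-d^2/(16u)}$, the maximization of $u^{-(N+m)/2}e^{-d^2/(16u)}$, and $m^m\le e^m m!$. The only cosmetic difference is that you rescale to $t=1$ and apply Cauchy's formula directly to the entire function $e^{-\zeta^2/4}$. The paper instead applies it to the Hermite generating function $e^{2uw-w^2}=e^{u^2}e^{-(u-w)^2}$, which amounts to the same estimate.
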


\begin{proof}
Write
\[
 g_t(z)=(4\pi t)^{-N/2}\prod_{j=1}^N e^{-u_j^2},
 \qquad u_j:=\frac{z_j}{2\sqrt t}.
\]
Let $H_n$ denote the physicists' Hermite polynomial, so that
\[
 \frac{\dd^n}{\dd u^n}e^{-u^2}=(-1)^nH_n(u)e^{-u^2}.
\]
The required factorial derivative bound follows from the Hermite representation.
The generating function
\[
 e^{2uw-w^2}=\sum_{n=0}^\infty H_n(u)\frac{w^n}{n!}
\]
and Cauchy's estimate on $|w|=\rho$ give
\[
 |H_n(u)|\le n!\rho^{-n}e^{2|u|\rho+\rho^2}.
\]
Since $2|u|\rho-u^2/2\le2\rho^2$, choosing
$\rho=\sqrt{n/6}$ for $n\ge1$ and absorbing the case $n=0$ yields
\begin{equation}\label{eq:hermite-elementary}
 e^{-u^2/2}|H_n(u)|
 \le C^{n+1}n^{n/2}.
\end{equation}
Applying \eqref{eq:hermite-elementary} coordinatewise and using
$\prod_j\gamma_j^{\gamma_j/2}\le m^{m/2}$ gives
\eqref{eq:gaussian-derivative}.  Integrating the remaining Gaussian proves
\eqref{eq:gaussian-L1} and \eqref{eq:gaussian-L2}.

For the last assertion, assume $|z|\ge d$ and put $M:=(N+m)/2$.  Splitting
the exponential in \eqref{eq:gaussian-derivative},
\[
 u^{-M}e^{-d^2/(8u)}
 \le e^{-d^2/(16r)}u^{-M}e^{-d^2/(16u)}.
\]
The last factor satisfies
\[
 \sup_{u>0}u^{-M}e^{-d^2/(16u)}
 =\left(\frac{16M}{ed^2}\right)^M.
\]
Hence
\[
 |D^\gamma g_u(z)|
 \le C_d^{m+1}m^{m/2}M^Me^{-M}e^{-d^2/(16r)}.
\]
Stirling's inequalities, with the finitely many small values of $m$
absorbed into the constant, imply
\[
 m^{m/2}M^Me^{-M}\le C_N^{m+1}m!.
\]
This proves \eqref{eq:gaussian-offdiag-factorial} with
$c_d=d^2/16$ after enlarging $A_d$.
\end{proof}

\subsubsection{One-endpoint estimates from the Dynkin--Hunt formula}

\begin{lemma}
\label{lem:fractional-one-endpoint}
For every $K\Subset\Omega$ there exist $A_K>1$ and $c_K>0$ such that for
all multiindices $\alpha,\beta$ and $0<t\le1$,
\begin{align}
 \sup_{\substack{x\in\Omega\\y\in K}}
 |D_y^\beta q_\Omega(t,x,y)|
 &\le A_K^{|\beta|+1}|\beta|!e^{-c_K/t},
 \label{eq:q-one-y}\\
 \sup_{\substack{x\in K\\y\in\Omega}}
 |D_x^\alpha q_\Omega(t,x,y)|
 &\le A_K^{|\alpha|+1}|\alpha|!e^{-c_K/t}.
 \label{eq:q-one-x}
\end{align}
\end{lemma}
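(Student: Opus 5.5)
The plan is to differentiate the Dynkin--Hunt representation of \cref{lem:pointwise-hunt} under the expectation. The off-diagonal factorial Gaussian bound \eqref{eq:gaussian-offdiag-factorial} of \cref{lem:gaussian-derivative-bounds} then supplies the estimate. The second inequality \eqref{eq:q-one-x} will follow from the first by symmetry.

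Fix $K\Subset\Omega$ and choose a compact $K'\Subset\Omega$ containing the closed $\rho$-neighborhood of $K$, where $\rho:=\frac12\dist(K,\partial\Omega)$. Put $d:=\dist(K',\partial\Omega)>0$. By \eqref{eq:not-feeling-hunt}, for every $x\in\Omega$, $y\in K'$ and $0<t\le1$,
\[
 q_\Omega(t,x,y)=\mathbb E_x\!\left[\mathbf 1_{\{\tau_\Omega<t\}}\,g_{t-\tau_\Omega}(B_{\tau_\Omega}-y)\right].
\]
On $\{\tau_\Omega<t\}$ we have $B_{\tau_\Omega}\in\partial\Omega$, hence $|B_{\tau_\Omega}-y|\ge d$, and also $0<t-\tau_\Omega\le t\le1$. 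Therefore, for every multiindex $\gamma$, \eqref{eq:gaussian-offdiag-factorial} with $u=t-\tau_\Omega$ and $r=t$ gives the pathwise bound
\[
 \bigl|D^\gamma g_{t-\tau_\Omega}(B_{\tau_\Omega}-y)\bigr|\le A_d^{|\gamma|+1}|\gamma|!\,e^{-c_d/t},
\]
uniformly in $y\in K'$, in $x\in\Omega$, and in the path.

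Next, justify $D_y^\beta q_\Omega(t,x,y)=(-1)^{|\beta|}\mathbb E_x[\mathbf 1_{\{\tau_\Omega<t\}}D^\beta g_{t-\tau_\Omega}(B_{\tau_\Omega}-y)]$ for $y$ in the interior of $K'$, by induction on $|\beta|$. At each step, write the difference quotient in one coordinate direction via the mean value theorem, along a segment that stays inside $K'$ (possible for $y\in K$ and small increments, by the choice of $\rho$). The integrand is then dominated by the constant bound above with $|\gamma|=|\beta|+1$, which is integrable since $\mathbb P_x\le1$. Dominated convergence lets us pass to the limit. Taking absolute values and using $\mathbb P_x(\tau_\Omega<t)\le1$ yields \eqref{eq:q-one-y} with $A_K:=A_d$ and $c_K:=c_d$. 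The point I expect to require the most care is exactly this interchange: one must use the uniform-in-$u\in(0,t]$ form of \eqref{eq:gaussian-offdiag-factorial}, because $t-\tau_\Omega$ can be arbitrarily small. One must also use that the representation holds for \emph{every} $y$, not merely almost every $y$, which is the content of \cref{lem:pointwise-hunt}.

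For \eqref{eq:q-one-x}, note that $g_t(x-y)$ and $p_\Omega(t,x,y)$ are both symmetric in $(x,y)$, so $q_\Omega(t,x,y)=q_\Omega(t,y,x)$. Hence $D_x^\alpha q_\Omega(t,x,y)=D_2^\alpha q_\Omega(t,y,x)$, where $D_2$ denotes differentiation in the second spatial argument. With $x\in K$ and $y\in\Omega$, this is covered by \eqref{eq:q-one-y} with $(x,y)$ interchanged, giving the same constants.
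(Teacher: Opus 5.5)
Your proposal is correct and follows essentially the same route as the paper. You differentiate the pointwise Dynkin--Hunt representation under the expectation on a compact interior neighborhood of $K$, use the uniform off-diagonal bound \eqref{eq:gaussian-offdiag-factorial} with $u=t-\tau_\Omega$, $r=t$ as a deterministic majorant for the mean-value difference quotients, and deduce \eqref{eq:q-one-x} from the symmetry $q_\Omega(t,x,y)=q_\Omega(t,y,x)$; the only difference, your enlargement radius $\rho=\frac12\dist(K,\partial\Omega)$ in place of the paper's $d/4$-neighborhood, is immaterial.
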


\begin{proof}
Let $d:=\dist(K,\partial\Omega)>0$ and choose the closed interior neighborhood
\[
 K_*:=\{z\in\R^N:\dist(z,K)\le d/4\}\Subset\Omega.
\]
Then $\dist(K_*,\partial\Omega)\ge3d/4$.  The Dynkin--Hunt formula \eqref{eq:not-feeling-hunt} gives
\begin{equation}\label{eq:q-hunt-fractional}
 q_\Omega(t,x,y)
 =\mathbb E_x\!\left[
   \mathbf1_{\{\tau_\Omega<t\}}
   g_{t-\tau_\Omega}(B_{\tau_\Omega}-y)
 \right]
\end{equation}
for $x,y\in\Omega$.  On $\{\tau_\Omega<t\}$, path continuity gives
$B_{\tau_\Omega}\in\partial\Omega$.  Therefore, for every $y'\in K_*$,
\[
 |B_{\tau_\Omega}-y'|\ge3d/4.
\]
Applying \eqref{eq:gaussian-offdiag-factorial} with
$u=t-\tau_\Omega$ and $r=t$ gives, for $b:=|\beta|$,
\[
 \sup_{y'\in K_*}
 |D_{y'}^\beta
 g_{t-\tau_\Omega}(B_{\tau_\Omega}-y')|
 \le A_K^{b+1}b!e^{-c_K/t}.
\]
The right-hand side is independent of the Brownian path and is therefore
integrable.  Differentiation with respect to $y$ is justified by induction on
the order.  The case of order zero is \eqref{eq:q-hunt-fractional}.  Suppose
that for a multiindex $\gamma$ we already have
\[
 D_y^\gamma q_\Omega(t,x,y)
 =\mathbb E_x\!\left[
   \mathbf1_{\{\tau_\Omega<t\}}
   D_y^\gamma g_{t-\tau_\Omega}(B_{\tau_\Omega}-y)
 \right].
\]
For a coordinate vector $e_j$, the difference quotient of the integrand in
the $e_j$ direction is, by the one-dimensional mean-value theorem, an
$(|\gamma|+1)$st derivative of the same Gaussian evaluated at a point
$y+\theta h e_j\in K_*$, provided $|h|$ is small.  The preceding off-diagonal
factorial estimate with $\beta=\gamma+e_j$ therefore supplies a deterministic
integrable majorant, independent of $h$.  Dominated convergence gives the
formula with $\gamma+e_j$ in place of $\gamma$.  Induction proves the
pointwise derivative formula for every $\beta$, together with
\eqref{eq:q-one-y}.  Finally,
$p_\Omega(t,x,y)=p_\Omega(t,y,x)$ and $g_t(x-y)=g_t(y-x)$, so
$q_\Omega(t,x,y)=q_\Omega(t,y,x)$; interchanging the endpoints gives
\eqref{eq:q-one-x}.  In particular, no differentiation of the Brownian law
with respect to its starting point is required.
\end{proof}

\subsubsection{Mixed derivative estimates for small time}

\begin{lemma}
\label{lem:fractional-mixed-q}
For every $K\Subset\Omega$ there are constants $C_K>1$ and $c_K>0$ such
that, for $0<t\le1$, $x,y\in K$, and multiindices $\alpha,\beta$ with
$a:=|\alpha|$, $b:=|\beta|$,
\begin{align}
 |D_x^\alpha D_y^\beta q_\Omega(t,x,y)|
 \le C_K^{a+b+1}e^{-c_K/t}
 \Big(&a!\,b^{b/2}t^{-b/2}
      +b!\,a^{a/2}t^{-a/2}
      +a!b!\Big).
 \label{eq:q-mixed-small}
\end{align}
\end{lemma}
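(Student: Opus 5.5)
The plan is to reduce mixed derivatives to the one-endpoint bounds of \cref{lem:fractional-one-endpoint}. We split the time interval in half with the Chapman--Kolmogorov identity, so that each $x$-derivative and each $y$-derivative lands on a different factor. Fix $K\Subset\Omega$, put $d:=\dist(K,\partial\Omega)$, and write $\tau:=t/2$. The Dirichlet semigroup gives $p_\Omega(t,x,y)=\int_\Omega p_\Omega(\tau,x,z)p_\Omega(\tau,z,y)\,\dd z$. The free kernel gives $g_t(x-y)=\int_{\R^N}g_\tau(x-z)g_\tau(z-y)\,\dd z$. We insert $p_\Omega=g-q_\Omega$ in both factors. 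This yields the exact decomposition
\begin{align*}
 q_\Omega(t,x,y)
 &=\int_{\R^N\setminus\Omega}g_\tau(x-z)g_\tau(z-y)\,\dd z
 +\int_\Omega q_\Omega(\tau,x,z)g_\tau(z-y)\,\dd z\\
 &\quad+\int_\Omega g_\tau(x-z)q_\Omega(\tau,z,y)\,\dd z
 -\int_\Omega q_\Omega(\tau,x,z)q_\Omega(\tau,z,y)\,\dd z .
\end{align*}
We then apply $D_x^\alpha D_y^\beta$ to each of the four terms separately. In every term the $x$-dependence and the $y$-dependence sit in different factors.

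\emph{Second and third terms.} In the second term, $D_x^\alpha$ falls on $q_\Omega(\tau,x,z)$ with $x\in K$ and $z\in\Omega$, so \eqref{eq:q-one-x} bounds it uniformly in $z$ by $A_K^{a+1}a!\,e^{-2c_K/t}$. Also $D_y^\beta$ falls on $g_\tau(z-y)$, whose $L^1(\dd z)$ norm is at most $C_0^{b+1}b^{b/2}(t/2)^{-b/2}$ by \eqref{eq:gaussian-L1}. The product gives the term $a!\,b^{b/2}t^{-b/2}$. The third term is symmetric: \eqref{eq:q-one-y} is used for the $y$-factor, and \eqref{eq:gaussian-L1} for $D_x^\alpha g_\tau$. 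This gives $b!\,a^{a/2}t^{-a/2}$. The factors $2^{a/2}$ and $2^{b/2}$ coming from $\tau=t/2$ are absorbed into $C_K^{a+b+1}$.

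\emph{Fourth term.} Both factors of the fourth term are controlled by the one-endpoint bounds, uniformly in $z\in\Omega$. Integrating over the bounded set $\Omega$ therefore gives $|\Omega|A_K^{a+b+2}a!\,b!\,e^{-4c_K/t}$.

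\emph{First term.} For the exterior term, $|x-z|,|z-y|\ge d$ whenever $z\notin\Omega$. Splitting each Gaussian exponential in \eqref{eq:gaussian-derivative} into two halves, one half gives $e^{-d^2/(16\tau)}$. The other half, together with the powers $\tau^{-(N+m)/2}$, is handled exactly as in the proof of \eqref{eq:gaussian-offdiag-factorial}, i.e.\ by the bound $\sup_u u^{-M}e^{-d^2/(32u)}$ followed by Stirling. For $z$ far away we keep a residual factor $e^{-|z-y|^2/(64\tau)}$ to ensure integrability over $\R^N\setminus\Omega$; since $\tau\le 1/2$, its integral over $\R^N$ is bounded by a constant depending only on $N$. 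The result is a bound $C^{a+b+1}a!\,b!\,e^{-c/t}$.

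Every interchange of derivatives and integrals is justified as in \cref{lem:fractional-one-endpoint}. We argue by induction on the order, using difference quotients, the mean-value theorem, and dominated convergence. The dominating functions are the same bounds, taken with $x,y$ ranging over the slightly larger compact set $K_*$. Summing the four contributions and replacing $c_K$ by the smallest of the constants obtained gives \eqref{eq:q-mixed-small}.

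The main obstacle is the exterior Gaussian term. We need a factorial bound $a!\,b!$ rather than $a^{a/2}b^{b/2}$ times negative powers of $t$, and we must keep an integrable majorant on the unbounded set $\R^N\setminus\Omega$. We will first try to split $e^{-|w|^2/(8\tau)}$ into three pieces, used respectively for decay in $1/t$, for absorbing the powers $\tau^{-M}$ via Stirling, and for integrability in $z$. A second point to check is that the one-endpoint bounds really hold uniformly for $z$ all the way up to $\partial\Omega$, which is needed in the second, third and fourth terms. This is exactly the content of the suprema over $x\in\Omega$ and $y\in\Omega$ in \cref{lem:fractional-one-endpoint}.
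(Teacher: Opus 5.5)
Your proof is correct and follows the paper's argument. It uses the same half-time Chapman--Kolmogorov splitting into an exterior Gaussian term and three interior terms, and it places the one-endpoint factorial bounds and the Gaussian $L^1$ bound exactly as the paper does. The only difference is the exterior term: the paper uses \eqref{eq:gaussian-offdiag-factorial} in the $x$-factor (uniformly in $z\in\Omega^c$) and \eqref{eq:gaussian-L1} in the $y$-factor, which lands in the already-permitted $a!\,b^{b/2}t^{-b/2}$ term. Your three-way splitting of the exponential to reach $a!\,b!$ is valid but unnecessary, so the ``main obstacle'' you flag is not really one.
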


\begin{proof}
Put $r:=t/2$.  The free and killed semigroup identities give
\begin{align}
 q_\Omega(t,x,y)
 &=\int_{\Omega^c}g_r(x-z)g_r(z-y)\,\dd z \notag\\
 &\quad+\int_\Omega\Big[
 q_\Omega(r,x,z)g_r(z-y)
 +g_r(x-z)q_\Omega(r,z,y)
 -q_\Omega(r,x,z)q_\Omega(r,z,y)
 \Big]\,\dd z.
 \label{eq:q-semigroup-splitting}
\end{align}
The differentiations in \eqref{eq:q-semigroup-splitting} are justified as follows.
Choose a compact set $K_1$ such that
\[
 K\Subset K_1\Subset\Omega.
\]
All coordinate difference quotients used below are taken sufficiently small
that the shifted $x$- and $y$-points remain in $K_1$.  We proceed by induction
on the total differentiation order.  At each induction step the mean-value
theorem reduces the new difference quotient to an endpoint derivative of one
higher order evaluated at a shifted point in $K_1$.

For the term
\[
 I_1(x,y):=\int_\Omega q_\Omega(r,x,z)g_r(z-y)\,\dd z,
\]
the one-endpoint estimate \eqref{eq:q-one-x}, used with $K_1$, gives a bound
uniform in $z\in\Omega$ for every $x'\in K_1$, while
\eqref{eq:gaussian-L1} gives an $L^1(\R^N)$ majorant for every derivative of
the Gaussian factor, uniformly for $y'\in K_1$.  Dominated convergence for
successive difference quotients therefore gives
\[
 D_x^\alpha D_y^\beta I_1(x,y)
 =\int_\Omega D_x^\alpha q_\Omega(r,x,z)
 D_y^\beta g_r(z-y)\,\dd z.
\]
The same argument, with the endpoints interchanged, applies to
\[
 I_2(x,y):=\int_\Omega g_r(x-z)q_\Omega(r,z,y)\,\dd z.
\]
For
\[
 I_3(x,y):=\int_\Omega q_\Omega(r,x,z)q_\Omega(r,z,y)\,\dd z,
\]
both endpoint derivatives are uniformly bounded by
\eqref{eq:q-one-x}--\eqref{eq:q-one-y}, and boundedness of $\Omega$ supplies
an integrable constant majorant.  Finally, for
\[
 I_0(x,y):=\int_{\Omega^c}g_r(x-z)g_r(z-y)\,\dd z,
\]
we have
\[
 |x'-z|\ge d_1:=\dist(K_1,\Omega^c)>0
 \qquad(x'\in K_1,\ z\in\Omega^c).
\]
Thus \eqref{eq:gaussian-offdiag-factorial} controls every $x$-endpoint
finite difference through the mean-value theorem, while
\eqref{eq:gaussian-L1} gives the required $z$-integrable majorant for the
$y$-endpoint derivatives.  Hence all derivatives in \eqref{eq:q-semigroup-splitting} can be passed
under the corresponding integrals.

We estimate the four differentiated terms.  For $I_1$,
\eqref{eq:q-one-x} and \eqref{eq:gaussian-L1} yield
\[
 |D_x^\alpha D_y^\beta I_1(x,y)|
 \le C_K^{a+b+1}a!\,b^{b/2}t^{-b/2}e^{-c_K/t}.
\]
The term $I_2$ is bounded symmetrically by
\[
 C_K^{a+b+1}b!\,a^{a/2}t^{-a/2}e^{-c_K/t}.
\]
For $I_3$, boundedness of $\Omega$ and the two one-endpoint estimates give
\[
 |D_x^\alpha D_y^\beta I_3(x,y)|
 \le C_K^{a+b+1}a!b!e^{-c_K/t}.
\]
For $I_0$, \eqref{eq:gaussian-offdiag-factorial} in the $x$ factor and
\eqref{eq:gaussian-L1} in the $y$ factor give the same bound as for $I_1$.
Summing the four estimates proves \eqref{eq:q-mixed-small}.
\end{proof}

\begin{lemma}
\label{lem:finite-order-heat-defect}
Let $K\Subset\Omega$ and let $M\in\mathbb N_0$.  Then there exist constants
$C=C(K,M)>0$, $c=c(K)>0$, and $P=P(M,N)\ge0$ such that, for $0<t\le1$,
\begin{align}
 &\sup_{\substack{x\in\Omega\\y\in K}}
   |D_y^\beta q_\Omega(t,x,y)|
 +\sup_{\substack{x\in K\\y\in\Omega}}
   |D_x^\alpha q_\Omega(t,x,y)|
 \le Ct^{-P}e^{-c/t},
 \qquad |\alpha|,|\beta|\le M,
 \label{eq:finite-defect-one-endpoint}\\
 &\sup_{x,y\in K}
   |D_x^\alpha D_y^\beta q_\Omega(t,x,y)|
 \le Ct^{-P}e^{-c/t},
 \qquad |\alpha|+|\beta|\le M.
 \label{eq:finite-defect-mixed}
\end{align}
The constants are uniform for all derivatives in the indicated finite sets.
\end{lemma}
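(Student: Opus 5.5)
This is a finite-order repackaging of \cref{lem:fractional-one-endpoint} and \cref{lem:fractional-mixed-q}. The plan is to freeze the finitely many derivative orders involved and absorb every factorial and every power of $t^{-1/2}$ into a single constant times a fixed negative power of $t$. No new analysis of $q_\Omega$ should be needed, since both lemmas already give pointwise derivative bounds with the correct exponential factor $e^{-c_K/t}$ for $0<t\le1$.

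First, for the one-endpoint estimate \eqref{eq:finite-defect-one-endpoint}, apply \eqref{eq:q-one-y} and \eqref{eq:q-one-x} with the given $K$. For $|\alpha|,|\beta|\le M$ the prefactors $A_K^{|\beta|+1}|\beta|!$ and $A_K^{|\alpha|+1}|\alpha|!$ are bounded by $A_K^{M+1}M!$. This gives a bound $Ce^{-c_K/t}$, so any $P\ge0$ works here. We take $c:=c_K$ and $C:=2A_K^{M+1}M!$ for this part.

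Second, for the mixed estimate \eqref{eq:finite-defect-mixed}, apply \eqref{eq:q-mixed-small} with $a+b\le M$. Each of the three terms in the bracket is at most $M!\,M^{M/2}\,t^{-M/2}$ when $0<t\le1$, because $t^{-b/2}\le t^{-M/2}$ and likewise for $a$. Hence the right-hand side is bounded by $3C_K^{M+1}M!\,M^{M/2}\,t^{-M/2}e^{-c_K/t}$. Choosing $P:=M/2$ gives \eqref{eq:finite-defect-mixed}, and since $t^{-P}\ge1$ on $(0,1]$ the same $P$ also serves for \eqref{eq:finite-defect-one-endpoint}.

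To arrange that $c$ depends only on $K$, take $c$ to be the smaller of the two exponents $c_K$ from the two lemmas; both depend only on $K$ through $\dist(K,\partial\Omega)$. Enlarging $C$ to the maximum of the two constants above, and summing over the finitely many multiindices with $|\alpha|,|\beta|\le M$, gives constants uniform on the indicated finite sets. There is no real obstacle. The only points to check are that the exponent $c$ must not depend on $M$, which holds because $c_K$ in both lemmas is independent of the differentiation order, and that the restriction $t\le1$ is what allows $t^{-b/2}\le t^{-M/2}$.
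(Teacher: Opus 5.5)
Your proposal is correct and follows the paper's own argument: both invoke \cref{lem:fractional-one-endpoint} for the one-endpoint bounds (with $P=0$ sufficing there) and \cref{lem:fractional-mixed-q} for the mixed bounds. In both, the finitely many factorial factors are absorbed into $C$, $t^{-|\alpha|/2},t^{-|\beta|/2}\le t^{-M/2}$ is used on $(0,1]$, and a common $P$ (your explicit choice $P=M/2$) is taken.
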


\begin{proof}
This is an immediate finite-order consequence of the stronger factorial
estimates above.  For the one-endpoint bounds, apply
\cref{lem:fractional-one-endpoint}; because only the finitely many orders
$0,\ldots,M$ occur, the factors $A_K^{m+1}m!$ are absorbed into a single
constant, and one may take $P=0$ there.  For the mixed bounds, apply
\cref{lem:fractional-mixed-q}.  Since $0<t\le1$ and
$|\alpha|+|\beta|\le M$, every factor $t^{-|\alpha|/2}$ or
$t^{-|\beta|/2}$ is bounded by $t^{-M/2}$; the finitely many factorial and
power factors are again absorbed into $C$.  Enlarging $P$ if desired gives
the common form stated above.
\end{proof}

\begin{lemma}
\label{lem:fractional-gamma-factorial}
Let $s>0$ and $c>0$.  There exists $C=C(s,c)>1$ such that for every integer
$m\ge0$,
\begin{equation}\label{eq:gamma-factorial}
 m^{m/2}\int_0^1 t^{s-1-m/2}e^{-c/t}\,\dd t
 \le C^{m+1}m!,
\end{equation}
with $0^0:=1$.
\end{lemma}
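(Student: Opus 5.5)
The plan is to substitute $u=1/t$, which turns the truncated integral into an incomplete Gamma-type integral, and then bound that integral by a complete Gamma function. We have $\dd t=-u^{-2}\dd u$ and $t^{s-1-m/2}=u^{1+m/2-s}$. Hence
\[
 \int_0^1 t^{s-1-m/2}e^{-c/t}\,\dd t
 =\int_1^\infty u^{m/2-s-1}e^{-cu}\,\dd u .
\]
This rewrites the left-hand side of \eqref{eq:gamma-factorial} as $m^{m/2}$ times a Gamma-type tail. The estimate then reduces to comparing $m^{m/2}\Gamma(m/2+\text{const})$ with $m!$ via Stirling.

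\emph{Removing the possibly negative exponent.} Since $u\ge1$, we may enlarge the exponent without decreasing the integrand. Put $k:=m/2-s-1$ and replace $u^{k}$ by $u^{k_+}$ with $k_+:=\max\{k,0\}\le m/2$; in fact we bound $u^k\le u^{m/2}$ for $u\ge1$. Then
\[
 \int_1^\infty u^{k}e^{-cu}\,\dd u
 \le\int_0^\infty u^{m/2}e^{-cu}\,\dd u
 =c^{-m/2-1}\,\Gamma\!\bigl(\tfrac m2+1\bigr).
\]
This uniform majorant avoids any case split according to the sign of $m/2-s-1$. This matters because the tail is only integrable near $u=1$ trivially, and near infinity it is controlled by the exponential.

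\emph{Stirling comparison.} It remains to show $m^{m/2}\Gamma(m/2+1)\le C^{m+1}m!$. Stirling gives
\[
 \Gamma\!\bigl(\tfrac m2+1\bigr)\le C_1^{m+1}(m/2)^{m/2}e^{-m/2}\sqrt{m+1}.
\]
Hence
\[
 m^{m/2}\Gamma\!\bigl(\tfrac m2+1\bigr)\le C_2^{m+1}m^m e^{-m/2}.
\]
On the other hand, $m!\ge m^m e^{-m}$. So the ratio is at most $C_2^{m+1}e^{m/2}$, which is absorbed into $C^{m+1}$. The factor $c^{-m/2-1}$ is likewise absorbed, with the constant depending on $c$; the finitely many small $m$, including $m=0$ with $0^0=1$, are handled by enlarging $C$. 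The resulting $C$ depends on $s$ and $c$ only, and the dependence on $s$ can even be dropped by this crude bound.

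There is no real obstacle. The only point requiring care is the bookkeeping in the Stirling step, so that all geometric factors $C^{m}$, $c^{-m/2}$ and $e^{m/2}$ are uniformly absorbed and the bound holds for every $m\ge0$ simultaneously, rather than only asymptotically.
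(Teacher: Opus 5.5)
Your proposal is correct and is essentially the paper's argument: substitute to turn the integral into a Gamma-type tail, bound it by a complete Gamma function, and compare with $m!$ via Stirling. The only difference is that you dominate $u^{m/2-s-1}\le u^{m/2}$ on $[1,\infty)$. This replaces the paper's split into $m\le 2s+2$ and $m>2s+2$ (where it uses $\Gamma(\tfrac m2-s)$), and, as you note, makes the constant independent of $s$.
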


\begin{proof}
Set
\[
 J_m:=\int_0^1 t^{s-1-m/2}e^{-c/t}\,\dd t.
\]
The substitution $u=c/t$ gives
\[
 J_m=c^{s-m/2}\int_c^\infty u^{m/2-s-1}e^{-u}\,\dd u.
\]
For the finitely many integers $m\le2s+2$, the required bound follows after
enlarging $C$.  For $m>2s+2$,
\[
 J_m\le c^{s-m/2}\Gamma\!\left(\frac m2-s\right).
\]
Stirling's upper bound for the Gamma function and lower bound for $m!$ give
\[
 \frac{m^{m/2}J_m}{m!}
 \le C_{s,c}\left(C_c\right)^m m^{-s-1}
 \le C_{s,c}^{m+1}.
\]
This is \eqref{eq:gamma-factorial}.
\end{proof}

\begin{lemma}
\label{lem:factorial-analytic-criterion}
Let $U\subset\R^d$ be open and let $f\in C^\infty(U)$.  Assume that for every
compact set $K\Subset U$ there is a constant $C_K\ge1$ such that
\begin{equation}\label{eq:factorial-analytic-criterion}
 \sup_{x\in K}|D^\gamma f(x)|
 \le C_K^{|\gamma|+1}|\gamma|!
 \qquad\text{for every multiindex }\gamma.
\end{equation}
Then $f$ is real analytic in $U$.
\end{lemma}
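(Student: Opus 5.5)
The plan is to show that the Taylor series of $f$ at each point of $U$ converges to $f$ on a small ball, using Taylor's formula with integral remainder. Fix $x_0\in U$ and choose $\rho>0$ with $\overline{B_{2\rho}(x_0)}\subset U$. Apply the hypothesis \eqref{eq:factorial-analytic-criterion} to the compact set $K:=\overline{B_{2\rho}(x_0)}$, and write $C:=C_K$.

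For $x\in B_r(x_0)$, where $r\le\rho$ is still to be chosen, put $h:=x-x_0$ and $\phi(\tau):=f(x_0+\tau h)$ for $\tau\in[0,1]$. The segment stays in $K$, so $\phi\in C^\infty([0,1])$. Taylor's formula with integral remainder gives
\[
 f(x)=\sum_{n<M}\frac{\phi^{(n)}(0)}{n!}
 +\int_0^1\frac{(1-\tau)^{M-1}}{(M-1)!}\phi^{(M)}(\tau)\,\dd\tau .
\]
By the multinomial expansion, $\phi^{(M)}(\tau)=\sum_{|\gamma|=M}\frac{M!}{\gamma!}h^\gamma D^\gamma f(x_0+\tau h)$. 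Bound each $|h^\gamma|$ by $|h|^M$, and use $\sum_{|\gamma|=M}M!/\gamma!=d^M$. Then
\[
 |\phi^{(M)}(\tau)|\le d^M|h|^M C^{M+1}M!.
\]
Hence the remainder is at most $C\,(dC|h|)^M$. Choosing $r<\min\{\rho,1/(2dC)\}$, this tends to zero as $M\to\infty$, uniformly on $B_r(x_0)$.

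It follows that $f(x)$ equals the sum of the series $\sum_n\phi^{(n)}(0)/n!=\sum_n\sum_{|\gamma|=n}D^\gamma f(x_0)h^\gamma/\gamma!$, grouped by total degree. The same bound at $\tau=0$ shows that the multi-index power series converges absolutely on $B_r(x_0)$: the sum $\sum_\gamma|D^\gamma f(x_0)||h^\gamma|/\gamma!$ is dominated by $\sum_n C(dC|h|)^n<\infty$. Absolute convergence allows regrouping, so $f$ is represented by a convergent power series near every $x_0$, which is real analyticity.

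The only step needing any care is the multinomial count $\sum_{|\gamma|=M}M!/\gamma!=d^M$ that controls the directional derivative by the coordinate derivatives. Everything else is the standard Cauchy-estimate argument, so I do not expect a genuine obstacle.
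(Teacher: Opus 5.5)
Your proof is correct and follows essentially the same route as the paper. Both restrict to segments $\phi(\tau)=f(x_0+\tau h)$, expand $\phi^{(M)}$ by the multinomial formula, and use the factorial bound to make the Taylor remainder decay geometrically; the paper bounds the multinomial sum by $|h|_1^M$ rather than $d^M|h|^M$ and uses the Lagrange form of the remainder, which are cosmetic differences. Your explicit check that $\sum_\gamma |D^\gamma f(x_0)|\,|h^\gamma|/\gamma!$ converges, which justifies regrouping the degree-by-degree series into the multi-index Taylor series, is a point the paper leaves implicit.
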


\begin{proof}
Fix $x_0\in U$ and choose $r>0$ so that
$\overline{B_{2r}(x_0)}\Subset U$.  Let $C\ge1$ be the constant in
\eqref{eq:factorial-analytic-criterion} for this compact ball.  For a vector
$h\in\R^d$ with $|h|<r$ set
\[
 \phi_h(t):=f(x_0+th),\qquad 0\le t\le1.
\]
For every integer $n\ge0$, the multinomial formula gives
\[
 \phi_h^{(n)}(t)
 =\sum_{|\gamma|=n}\frac{n!}{\gamma!}
 h^\gamma D^\gamma f(x_0+th).
\]
Since the whole segment $\{x_0+th:0\le t\le1\}$ lies in
$\overline{B_{2r}(x_0)}$, the factorial bound implies
\begin{align*}
 |\phi_h^{(n)}(t)|
 &\le C^{n+1}n!
 \sum_{|\gamma|=n}\frac{n!}{\gamma!}|h^\gamma|\\
 &=C^{n+1}n!\,|h|_1^n,
\end{align*}
where $|h|_1:=\sum_{j=1}^d|h_j|$.  Taylor's formula in the one-dimensional
variable $t$, evaluated at $t=1$, therefore gives
\begin{align*}
 f(x_0+h)
 &=\sum_{n=0}^m\frac1{n!}(h\cdot\nabla)^n f(x_0)+\mathcal R_m(h),\\
 |\mathcal R_m(h)|
 &\le \frac1{(m+1)!}
 \sup_{0\le t\le1}|\phi_h^{(m+1)}(t)|
 \le C^{m+2}|h|_1^{m+1}.
\end{align*}
If $|h|<r$ and $C|h|_1<1$, the last quantity tends to zero as
$m\to\infty$.  Finally,
\[
 \frac1{n!}(h\cdot\nabla)^n f(x_0)
 =\sum_{|\gamma|=n}\frac{D^\gamma f(x_0)}{\gamma!}h^\gamma,
\]
so the resulting series is precisely the multivariable Taylor series of
$f$ at $x_0$.  Hence $f$ agrees with its Taylor series in a neighborhood of
$x_0$.  Since $x_0$ was arbitrary, $f\in C^\omega(U)$.
\end{proof}

\subsubsection{Analyticity of the regular part for arbitrary real spectral orders}

\begin{lemma}
\label{lem:real-order-analytic}
Let $s>0$ and assume $N>2s$.  Then the function
\[
 H_{s,\Omega}(x,y)=\Phi_s(x-y)-K_{s,\Omega}(x,y),
 \qquad x\ne y,
\]
extends uniquely to a real-analytic function on $\Omega\times\Omega$.
More precisely, for every $K\Subset\Omega$ there exists $C_{K,s}>1$ such that
for all multiindices $\alpha,\beta$,
\begin{equation}\label{eq:real-order-analytic-factorial}
 \sup_{(x,y)\in K\times K}
 |D_x^\alpha D_y^\beta H_{s,\Omega}(x,y)|
 \le C_{K,s}^{|\alpha|+|\beta|+1}
 (|\alpha|+|\beta|)!.
\end{equation}
The dependence of $C_{K,s}$ on $s$ is intentional; no uniformity of this
analytic bound with respect to the spectral parameter is asserted.
Consequently $R_{s,\Omega}(y)=H_{s,\Omega}(y,y)$ is real analytic in
$\Omega$.  No restriction $s<1$ is used in this conclusion.
\end{lemma}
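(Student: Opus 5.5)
Since the Mellin integral of the free heat kernel gives $\Phi_s$ exactly, we can write, for $x\ne y$,
\[
 H_{s,\Omega}(x,y)=\frac1{\Gamma(s)}\int_0^\infty t^{s-1}q_\Omega(t,x,y)\,\dd t,
\]
and we split it as $H_{s,\Omega}=H^{(0)}+H^{(\infty)}$, with $H^{(0)}$ the integral over $(0,1]$ and $H^{(\infty)}$ the integral over $[1,\infty)$. The plan is to show that each piece is defined and $C^\infty$ on all of $K_1\times K_1$, where $K_1\Subset\Omega$ is a neighborhood of a given compact $K$, including the diagonal. We then verify the factorial bound \eqref{eq:real-order-analytic-factorial} for each piece and conclude with \cref{lem:factorial-analytic-criterion}, applied on $U=\Omega\times\Omega$ with $d=2N$. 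Uniqueness of the extension is automatic, since the off-diagonal set is dense. Analyticity of $R_{s,\Omega}$ then follows by composing with the analytic map $y\mapsto(y,y)$.

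For the short-time piece we use \cref{lem:fractional-mixed-q} on $K_1$. For $x,y\in K_1$ and $a=|\alpha|$, $b=|\beta|$, the integrand $t^{s-1}D_x^\alpha D_y^\beta q_\Omega$ is dominated by
\[
 C^{a+b+1}t^{s-1}e^{-c/t}\bigl(a!\,b^{b/2}t^{-b/2}+b!\,a^{a/2}t^{-a/2}+a!b!\bigr).
\]
This majorant is independent of $(x,y)$, which justifies differentiating under the integral by dominated convergence on difference quotients, and it also gives continuity up to the diagonal. \cref{lem:fractional-gamma-factorial} turns $b^{b/2}\int_0^1 t^{s-1-b/2}e^{-c/t}\,\dd t$ into $C^{b+1}b!$. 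Since $a!\,b!\le(a+b)!$, we obtain $|D_x^\alpha D_y^\beta H^{(0)}|\le C^{a+b+1}(a+b)!$ on $K_1\times K_1$.

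The long-time piece is where I expect the main obstacle. Here we cannot use $q_\Omega$ directly: the $g_t$ part has derivatives decaying only like $t^{-(N+a+b)/2}$ with factorial constants, while the $p_\Omega$ part needs analytic interior control. I would treat the two parts separately.

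\emph{Free part.} $\int_1^\infty t^{s-1}D^\gamma g_t(x-y)\,\dd t$ is bounded via \eqref{eq:gaussian-derivative} by $C^{m+1}m^{m/2}\int_1^\infty t^{s-1-(N+m)/2}\,\dd t$, where $m=|\gamma|$. This converges because $N>2s$, and $m^{m/2}\le C^m m!$.

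\emph{Killed part.} We use the semigroup property
\[
 p_\Omega(t,x,y)=\int_{\Omega}\int_{\Omega}p_\Omega(\tfrac12,x,z)\,p_\Omega(t-1,z,w)\,p_\Omega(\tfrac12,w,y)\,\dd z\,\dd w
\]
for $t\ge1$, together with $p_\Omega(\tfrac12,x,z)=g_{1/2}(x-z)-q_\Omega(\tfrac12,x,z)$. This puts all $x$-derivatives on $g_{1/2}$, bounded by \eqref{eq:gaussian-derivative}, and on $q_\Omega(\tfrac12,\cdot,z)$, bounded uniformly in $z\in\Omega$ by \eqref{eq:q-one-x}. The $y$-derivatives are handled the same way. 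The middle factor contributes $\iint p_\Omega(t-1,z,w)\,\dd z\,\dd w\le Ce^{-\lambda_1(t-1)}$ by the spectral bound $\|e^{-(t-1)A}\|\le e^{-\lambda_1(t-1)}$ and Cauchy--Schwarz on the bounded set $\Omega$.

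This yields $|D_x^\alpha D_y^\beta p_\Omega(t,x,y)|\le C^{a+b+2}a!\,b!\,e^{-\lambda_1 t}$ for $t\ge1$. This bound is integrable against $t^{s-1}$, and again $a!\,b!\le(a+b)!$. Summing the three contributions gives \eqref{eq:real-order-analytic-factorial}, with constants depending on $s$ through \cref{lem:fractional-gamma-factorial} and the time integrals.
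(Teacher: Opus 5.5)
Your proposal is correct and follows the paper's proof essentially step by step. Both write the regular part as the heat-defect Mellin integral split at $t=1$; both control $(0,1]$ by \cref{lem:fractional-mixed-q}, \cref{lem:fractional-gamma-factorial} and $a!\,b!\le(a+b)!$, and the free tail by \eqref{eq:gaussian-derivative} with $N>2s$. For the killed tail, both use a semigroup factorization through time $1/2$ with $p_\Omega(1/2)=g_{1/2}-q_\Omega(1/2)$ plus the spectral gap, before concluding with \cref{lem:factorial-analytic-criterion} in dimension $2N$.

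The only difference is cosmetic. You bound the killed tail by pointwise sup bounds on $D_x^\alpha p_\Omega(1/2,x,\cdot)$ and the mass estimate $\iint p_\Omega(t-1,z,w)\,\dd z\,\dd w\le|\Omega|e^{-\lambda_1(t-1)}$, differentiating under the integral directly. The paper instead uses $L^2$ bounds and an $L^2(\Omega)$-valued differentiability argument for the inner-product formula. Both versions work.
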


\begin{proof}
We construct the extension directly from the heat-kernel defect.  Set
\begin{equation}\label{eq:real-order-H-heat}
 \widetilde H_{s,\Omega}(x,y)
 :=\frac1{\Gamma(s)}\int_0^\infty
 t^{s-1}q_\Omega(t,x,y)\,\dd t.
\end{equation}
For $x,y$ in a fixed compact subset of $\Omega$, the small-time integral
converges by \eqref{eq:not-feeling-boundary}; the large-time integral
converges by the estimates below.

Let $K\Subset\Omega$, let $a=|\alpha|$, $b=|\beta|$ and $k=a+b$.  From
\cref{lem:fractional-mixed-q,lem:fractional-gamma-factorial}, together with
$a!b!\le k!$, we obtain
\begin{equation}\label{eq:real-order-smalltime-integrated}
 \int_0^1 t^{s-1}
 \sup_{(x,y)\in K\times K}
 |D_x^\alpha D_y^\beta q_\Omega(t,x,y)|\,\dd t
 \le C_{K,s}^{k+1}k!.
\end{equation}
Indeed, the pointwise estimate \eqref{eq:q-mixed-small} is uniform on
$K\times K$.  After taking the supremum in $(x,y)$, the first term is
integrable by \cref{lem:fractional-gamma-factorial} and bounded by
$C_{K,s}^{k+1}a!b!$; the second term is treated symmetrically, and the third one
is bounded directly by $C_{K,s}^{k+1}a!b!$.  Thus the left-hand side of
\eqref{eq:real-order-smalltime-integrated} is finite and provides an explicit
$L^1(0,1)$ majorant, uniform for $(x,y)\in K\times K$.

For $t\ge1$, $D_x^\alpha D_y^\beta g_t(x-y)$ is, up to sign, a derivative
of $g_t$ of total order $k$.  Hence \eqref{eq:gaussian-derivative} gives
\[
 |D_x^\alpha D_y^\beta g_t(x-y)|
 \le C^{k+1}k^{k/2}t^{-(N+k)/2}.
\]
Since $N>2s$,
\begin{equation}\label{eq:real-order-large-g}
 \int_1^\infty t^{s-1}
 \sup_{(x,y)\in K\times K}
 |D_x^\alpha D_y^\beta g_t(x-y)|\,\dd t
 \le C_{K,s}^{k+1}k!.
\end{equation}

For the killed kernel let $A=-\Delta_D$ and let $\lambda_1>0$ be its first
Dirichlet eigenvalue.  At the fixed time $1/2$,
$p_\Omega(1/2)=g_{1/2}-q_\Omega(1/2)$; therefore
\eqref{eq:gaussian-L2} and the one-endpoint estimates imply
\begin{align}
 \sup_{x\in K}
 \|D_x^\alpha p_\Omega(1/2,x,\cdot)\|_{L^2(\Omega)}
 &\le C_{K,s}^{a+1}a!,\label{eq:p-half-L2-x}\\
 \sup_{y\in K}
 \|D_y^\beta p_\Omega(1/2,\cdot,y)\|_{L^2(\Omega)}
 &\le C_{K,s}^{b+1}b!.\label{eq:p-half-L2-y}
\end{align}
To justify differentiation of the large-time semigroup formula without
assuming mixed differentiability a priori, we start from the undifferentiated
identity, valid for $t\ge1$,
\begin{equation}\label{eq:p-large-semigroup-zero}
 p_\Omega(t,x,y)
 =\Big\langle
 p_\Omega(1/2,x,\cdot),
 e^{-(t-1)A}p_\Omega(1/2,\cdot,y)
 \Big\rangle_{L^2(\Omega)}.
\end{equation}
Choose $K\Subset K_1\Subset\Omega$.  The bounds
\eqref{eq:p-half-L2-x}--\eqref{eq:p-half-L2-y}, with $K_1$ in place of $K$,
hold for every derivative order.  For a multiindex $\gamma$, set
\[
 F_\gamma(x):=D_x^\gamma p_\Omega(1/2,x,\cdot)\in L^2(\Omega),
\]
and, for a multiindex $\delta$, set
\[
 G_\delta(y):=D_y^\delta p_\Omega(1/2,\cdot,y)\in L^2(\Omega).
\]
The maps $F_\gamma$ and $G_\delta$ are continuous as
$L^2(\Omega)$-valued maps on $K_1$.  For the free part this follows from
translation continuity of Gaussian derivatives in $L^2(\R^N)$.  For the
defect, symmetry gives
$D_x^\gamma q_\Omega(1/2,x,z)=D_y^\gamma q_\Omega(1/2,z,x)$; the pointwise
endpoint derivative formula proved in \cref{lem:fractional-one-endpoint}
is continuous in the parameter $x$ by dominated convergence on the same compact interior neighborhood, while the uniform bound in that lemma and
$|\Omega|<\infty$ yield $L^2$ continuity by another application of dominated
convergence.  The same argument applies at the $y$ endpoint.

Their $L^2$-valued differentiability follows directly from the fundamental
theorem of calculus.  Fix a coordinate vector
$e_j$, a multiindex $\gamma$, and $x\in K$.  For $|h|$ sufficiently small,
the segment $\{x+\theta h e_j:0\le\theta\le1\}$ is contained in $K_1$.
For every $z\in\Omega$, the one-dimensional fundamental theorem of
calculus gives
\[
 D_x^\gamma p_\Omega(1/2,x+h e_j,z)
 -D_x^\gamma p_\Omega(1/2,x,z)
 =\int_0^h
 D_x^{\gamma+e_j}p_\Omega(1/2,x+s e_j,z)\,\dd s.
\]
Since $s\mapsto F_{\gamma+e_j}(x+s e_j)$ is continuous as an
$L^2(\Omega)$-valued map, the right-hand side is a Bochner integral and the
same identity holds in $L^2(\Omega)$.  Hence
\[
 \frac{F_\gamma(x+h e_j)-F_\gamma(x)}{h}
 =\int_0^1F_{\gamma+e_j}(x+\theta h e_j)\,\dd\theta,
\]
and therefore
\begin{align*}
 &\left\|
 \frac{F_\gamma(x+h e_j)-F_\gamma(x)}{h}
 -F_{\gamma+e_j}(x)
 \right\|_{L^2(\Omega)}\\
 &\qquad\le
 \int_0^1
 \|F_{\gamma+e_j}(x+\theta h e_j)-F_{\gamma+e_j}(x)\|_{L^2(\Omega)}
 \,\dd\theta
 \longrightarrow0
 \qquad(h\to0).
\end{align*}
Thus $\partial_{x_j}F_\gamma=F_{\gamma+e_j}$ in $L^2(\Omega)$.  Since these
coordinate derivatives are continuous, the standard Banach-valued
$C^1$ criterion and induction on $|\gamma|$ show that
\[
 x\longmapsto p_\Omega(1/2,x,\cdot)
 \quad\text{belongs to }C^\infty_{\mathrm{loc}}(\Omega;L^2(\Omega)),
\]
with
\[
 D_x^\alpha[p_\Omega(1/2,x,\cdot)]
 =D_x^\alpha p_\Omega(1/2,x,\cdot).
\]
The same argument gives
\[
 y\longmapsto p_\Omega(1/2,\cdot,y)
 \quad\text{in }C^\infty_{\mathrm{loc}}(\Omega;L^2(\Omega)),
\]
with the analogous identity for all $y$-derivatives.

For fixed $t\ge1$, put $T_t:=e^{-(t-1)A}$.  Since $T_t$ is bounded on
$L^2(\Omega)$, the map
\[
 B_t(u,v):=\langle u,T_tv\rangle_{L^2(\Omega)}
\]
is a continuous bilinear form on $L^2(\Omega)\times L^2(\Omega)$.  Applying the chain rule for continuous bilinear maps to
\eqref{eq:p-large-semigroup-zero} then yields
\begin{align*}
 D_x^\alpha D_y^\beta p_\Omega(t,x,y)
 =\Big\langle
 D_x^\alpha p_\Omega(1/2,x,\cdot),
 e^{-(t-1)A}D_y^\beta p_\Omega(1/2,\cdot,y)
 \Big\rangle_{L^2(\Omega)}.
\end{align*}
Since
$\|e^{-(t-1)A}\|_{L^2\to L^2}=e^{-\lambda_1(t-1)}$, after enlarging $C_{K,s}$
if necessary, \eqref{eq:p-half-L2-x}--\eqref{eq:p-half-L2-y} yield
\[
 |D_x^\alpha D_y^\beta p_\Omega(t,x,y)|
 \le C_{K,s}^{k+1}a!b!e^{-\lambda_1(t-1)}
 \le C_{K,s}^{k+1}k!e^{-\lambda_1(t-1)}.
\]
Thus
\begin{equation}\label{eq:real-order-large-p}
 \int_1^\infty t^{s-1}
 \sup_{(x,y)\in K\times K}
 |D_x^\alpha D_y^\beta p_\Omega(t,x,y)|\,\dd t
 \le C_{K,s}^{k+1}k!.
\end{equation}
Combining \eqref{eq:real-order-large-g} and
\eqref{eq:real-order-large-p} gives the same uniform-integrability bound for
$q_\Omega$ on $[1,\infty)$.

Consequently, \eqref{eq:real-order-smalltime-integrated} together with the
large-time bounds provides, for every compact $K\Subset\Omega$ and every
pair of multiindices $\alpha,\beta$, an $L^1(0,\infty)$ majorant for
\[
 t^{s-1}|D_x^\alpha D_y^\beta q_\Omega(t,x,y)|
\]
that is independent of $(x,y)\in K\times K$.  Successive dominated
convergence in
\eqref{eq:real-order-H-heat} therefore yields
\[
 D_x^\alpha D_y^\beta\widetilde H_{s,\Omega}(x,y)
 =\frac1{\Gamma(s)}\int_0^\infty t^{s-1}
 D_x^\alpha D_y^\beta q_\Omega(t,x,y)\,\dd t,
\]
and the preceding estimates prove
\eqref{eq:real-order-analytic-factorial}.  Applying
\cref{lem:factorial-analytic-criterion} in dimension $2N$ (with the joint
multiindex $(\alpha,\beta)$) yields
$\widetilde H_{s,\Omega}\in C^\omega(\Omega\times\Omega)$.

Finally, if $x\ne y$, both Mellin integrals may be separated and
\[
 \widetilde H_{s,\Omega}(x,y)
 =\frac1{\Gamma(s)}\int_0^\infty t^{s-1}g_t(x-y)\,\dd t
  -K_{s,\Omega}(x,y)
 =\Phi_s(x-y)-K_{s,\Omega}(x,y).
\]
Therefore $\widetilde H_{s,\Omega}$ is the required extension.  Its uniqueness
follows from the identity theorem for real-analytic functions on the connected
set $\Omega\times\Omega$.  Composing with the analytic diagonal embedding $y\mapsto(y,y)$ shows that
$R_{s,\Omega}$ is real analytic on $\Omega$.
\end{proof}

\begin{proof}[Proof of \cref{thm:robin-real-main}]
By \cref{lem:real-order-analytic}, the regular part extends real analytically
to $\Omega\times\Omega$ for every $s>0$ with $N>2s$.  In particular,
$H_{s,\Omega}$ extends continuously across the diagonal and
$R_{s,\Omega}$ is real analytic in $\Omega$.  Hence
\cref{prop:robin-transfer}, with the normalization
\eqref{eq:real-fundamental-intro}, gives convexity, the lower bound
\eqref{eq:robin-real-bound-intro}, boundary divergence, strict convexity,
and uniqueness of the critical point.  The unique critical point is therefore
also the unique global minimizer.
\end{proof}

\begin{proof}[Proof of \cref{cor:fractional-robin-center,cor:navier-robin-center}]
The first corollary is \cref{thm:robin-real-main} specialized to
$0<s<1$, where $A^s=(-\Delta_D)^s$ is the spectral fractional Dirichlet
Laplacian.  The second is the specialization $s=p\in\mathbb N$ together
with the Navier interpretation of the integer spectral powers recorded
below.
\end{proof}

\begin{remark}
\label{rem:integer-navier-interpretation}
When $s=p\in\mathbb N$, the spectral power $A^p$ is the iterated
Dirichlet Laplacian with
\[
 D(A^p)=\{u\in D(A):Au\in D(A),\ldots,A^{p-1}u\in D(A)\}.
\]
For $u\in D(A^p)$ one has $A^ju\in D(A)\subset H_0^1(\Omega)$ for
$j=0,\ldots,p-1$.  Since $Au=-\Delta u$ distributionally, the conditions
$A^ju\in H_0^1(\Omega)$ are exactly the Navier conditions
$(-\Delta)^ju=0$ on $\partial\Omega$ in the natural trace sense; for
sufficiently regular functions they agree with the classical pointwise
conditions.  Thus $K_{p,\Omega}=G_{p,\Omega}$ is the Green kernel of this
iterated-Dirichlet (Navier) realization and $R_{p,\Omega}$ in
\cref{def:spectral-robin-real} is its Robin function in the range $N>2p$.  In particular, this statement does not concern the clamped boundary
conditions $u=\partial_\nu u=0$ for the biharmonic operator.

At integer orders, analyticity can also be obtained by a local argument.
After cancellation of the common diagonal singularity,
\[
 (-\Delta_x)^pH_{p,\Omega}=0,
 \qquad
 (-\Delta_y)^pH_{p,\Omega}=0
 \quad\text{in }\mathscr D'(\Omega\times\Omega),
\]
so
\[
 \bigl[(-\Delta_x)^p+(-\Delta_y)^p\bigr]H_{p,\Omega}=0.
\]
The operator on the left is strongly elliptic with analytic constant
coefficients; interior analytic regularity therefore recovers the same
real-analytic extension obtained from the heat-kernel defect argument.  Although this observation is not used above, it clarifies the relation
between the spectral formulation and the classical Navier problem at integer
orders.
\end{remark}

\section{Second-order rigidity of spectral fractional Robin functions}
\label{sec:fractional-hessian}

By \cref{cor:fractional-robin-center}, the spectral fractional Robin
function already has a unique critical point on every bounded convex domain
in the range $N>2s$.  In this section we prove the
second-order assertion in \cref{thm:fractional-hessian-intro}, namely global
positive definiteness of its Hessian on $C^{2,\vartheta}$ convex domains.
Throughout this section $0<s<1$, $N\ge2$, $N>2s$, $0<\vartheta<1$, and
$\Omega\subset\R^N$ is a bounded convex domain of class $C^{2,\vartheta}$.  We use the
kernel, regular part and Robin function introduced in
\cref{sec:robin-transfer}; in particular $H_{s,\Omega}$ is real
analytic across the diagonal.

Set
\begin{equation}\label{eq:kappa}
 \kappa_s:=2^{2s-1}\frac{\Gamma(s)}{\Gamma(1-s)}>0,
 \qquad
 \mathcal C_\Omega:=\Omega\times(0,\infty),
 \qquad
 \Sigma_\Omega:=\partial\Omega\times(0,\infty).
\end{equation}
The extension variable is denoted by $z>0$, and we define
\begin{equation}\label{eq:extension-G}
 \mathcal G_s(x,z;y)
 :=\frac1{\Gamma(s)}\int_0^\infty
 \tau^{s-1}e^{-z^2/(4\tau)}p_\Omega(\tau,x,y)\,\dd\tau.
\end{equation}
This is the Stinga--Torrea extension of $K_{s,\Omega}(\cdot,y)$; thus
\begin{align}
 -\operatorname{div}_{x,z}\bigl(z^{1-2s}\nabla_{x,z}\mathcal G_s\bigr)
 &=0 &&\text{in }\mathcal C_\Omega,\label{eq:ext-PDE}\\
 \mathcal G_s&=0 &&\text{on }\Sigma_\Omega,\label{eq:ext-lat}\\
 -\kappa_s z^{1-2s}\partial_z\mathcal G_s(\cdot,z;y)
 &\rightharpoonup\delta_y &&\text{as }z\downarrow0.\label{eq:ext-Neumann}
\end{align}
The conormal limit in \eqref{eq:ext-Neumann} will also be derived directly
from the heat representation below.  For $x\ne y$,
\begin{equation}\label{eq:ext-trace}
 \mathcal G_s(x,0;y)=K_{s,\Omega}(x,y).
\end{equation}
For $\xi\in\partial\Omega$ define
\begin{equation}\label{eq:P-def}
 \mathcal P_s(\xi,z;y):=-\partial_{\nu_\xi}\mathcal G_s(\xi,z;y),
\end{equation}
and for $\eta\in\R^N$ define the simultaneous-translation field
\begin{equation}\label{eq:U-def}
 \mathcal U_{\eta,y}(x,z)
 :=(\partial_{x,\eta}+\partial_{y,\eta})\mathcal G_s(x,z;y).
\end{equation}
On $\partial\Omega$ write
\begin{equation}\label{eq:a-etaT}
 a:=\nu\cdot\eta,
 \qquad
 \eta_T:=\eta-a\nu,
\end{equation}
and use the convention
\begin{equation}\label{eq:II-conv}
 \II(X,Y):=\langle D_X\nu,Y\rangle,
 \qquad H_{\partial\Omega}:=\tr_{T\partial\Omega}\II.
\end{equation}
Hence $\II\ge0$ and $H_{\partial\Omega}\ge0$ on a convex domain.

\begin{proposition}
\label{prop:main}
For every $y\in\Omega$ and every $\eta\in\R^N$,
\begin{equation}\label{eq:main-identity}
\boxed{
\begin{aligned}
 D^2R_{s,\Omega}(y)[\eta,\eta]
 ={}&2\kappa_s
 \int_{\mathcal C_\Omega}
 z^{1-2s}|\nabla_{x,z}\mathcal U_{\eta,y}|^2\,\dd x\,\dd z\\
 &+\kappa_s
 \int_{\Sigma_\Omega}
 z^{1-2s}
 \Bigl[\II(\eta_T,\eta_T)+H_{\partial\Omega}(\nu\cdot\eta)^2\Bigr]
 \mathcal P_s^2\,\dd S\,\dd z.
\end{aligned}}
\end{equation}
In particular,
\begin{equation}\label{eq:Hess-positive}
 D^2R_{s,\Omega}(y)>0
 \qquad\text{for every }y\in\Omega.
\end{equation}
\end{proposition}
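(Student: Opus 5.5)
I would follow the Li--Liu--Ma translation--curvature strategy, carried out in the Stinga--Torrea extension cylinder $\mathcal C_\Omega$ with the degenerate weight $z^{1-2s}$.

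\textbf{Step 1: first variation.} I would first identify $\nabla R_{s,\Omega}(y)\cdot\eta$ as a boundary flux integral. Write $\mathcal G_s=\Phi^{\rm ext}-\mathcal H$, where
\[
 \Phi^{\rm ext}(x,z;y)=\frac1{\Gamma(s)}\int_0^\infty\tau^{s-1}e^{-z^2/(4\tau)}g_\tau(x-y)\,\dd\tau
\]
is the free extension of $\Phi_s$. This $\Phi^{\rm ext}$ depends only on $x-y$, so $(\partial_{x,\eta}+\partial_{y,\eta})\Phi^{\rm ext}=0$. Hence
\[
 \mathcal U_{\eta,y}=-(\partial_{x,\eta}+\partial_{y,\eta})\mathcal H ,
\]
which is regular near $(y,0)$. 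Its trace at $z=0$ equals $-(\partial_{x,\eta}+\partial_{y,\eta})H_{s,\Omega}(x,y)$, and this gives $-\nabla R_{s,\Omega}(y)\cdot\eta$ at $x=y$.

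The function $\mathcal U_{\eta,y}$ solves the weighted equation \eqref{eq:ext-PDE}. Its Neumann datum vanishes, because the translated delta cancels. On $\Sigma_\Omega$ we have $\mathcal G_s=0$, so $\nabla_x\mathcal G_s=-\mathcal P_s\nu$, and therefore $\mathcal U_{\eta,y}=-a\,\mathcal P_s$ there.

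Green's identity for $\operatorname{div}(z^{1-2s}\nabla\cdot)$, pairing $\mathcal U_{\eta,y}$ against $\mathcal G_s$ and using \eqref{eq:ext-Neumann}, then yields a formula of the type
\[
 \nabla R_{s,\Omega}(y)\cdot\eta=\kappa_s\int_{\Sigma_\Omega}z^{1-2s}(\nu\cdot\eta)\mathcal P_s^2\,\dd S\,\dd z .
\]
This is the fractional Hadamard/Rellich formula.

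\textbf{Step 2: second variation.} Next I would differentiate the Step 1 formula in $y$ along $\eta$. Since $\partial_{y,\eta}\mathcal P_s=-\partial_\nu\mathcal U_{\eta,y}-\partial_{x,\eta}\mathcal P_s$ on the lateral boundary, the tangential part $\eta_T\cdot\nabla_{\partial\Omega}(\mathcal P_s^2)$ can be integrated by parts on $\partial\Omega$ for each $z$. This produces the curvature terms $\II(\eta_T,\eta_T)$ and $H_{\partial\Omega}a^2$ through the divergence of $a\eta_T$ and of $\nu$.

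The normal part $-2a\mathcal P_s\partial_\nu\mathcal U_{\eta,y}=2\,\mathcal U_{\eta,y}\,\partial_\nu\mathcal U_{\eta,y}$ converts, via the weighted Green identity for $\mathcal U_{\eta,y}$ (zero Neumann datum at $z=0$), into $2\kappa_s\int z^{1-2s}|\nabla\mathcal U_{\eta,y}|^2$. Collecting the terms gives \eqref{eq:main-identity}.

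\textbf{Step 3: positivity.} Convexity gives $\II\ge0$ and $H_{\partial\Omega}\ge0$, so both terms in \eqref{eq:main-identity} are nonnegative. If the Hessian degenerated in some direction $\eta\neq0$, then $\nabla\mathcal U_{\eta,y}\equiv0$, so $\mathcal U_{\eta,y}$ is constant. Its boundary value $-a\mathcal P_s$ decays as $z\to\infty$, so $\mathcal U_{\eta,y}\equiv0$. Then $a\mathcal P_s\equiv0$ on $\Sigma_\Omega$. By the Hopf lemma $\mathcal P_s>0$, hence $\nu\cdot\eta=0$ on all of $\partial\Omega$, which is impossible for a bounded domain.

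\textbf{Main obstacle.} The hard part is justifying every integration by parts. This requires:
\begin{itemize}[label=$\cdot$]
 \item integrability of $z^{1-2s}|\nabla\mathcal U|^2$ and of $z^{1-2s}\mathcal P_s^2$ near $z=0$, near the corner $\partial\Omega\times\{0\}$, and as $z\to\infty$;
 \item the boundary terms at $z=\varepsilon$ and $z=L$ vanishing in the limit;
 \item $C^1$ control of $\mathcal G_s$ up to $\Sigma_\Omega$ on a $C^{2,\vartheta}$ domain.
\end{itemize}
I would derive these from the heat representation \eqref{eq:extension-G}, using boundary gradient bounds $|\nabla_x p_\Omega(\tau,\xi,y)|\lesssim\tau^{-(N+1)/2}e^{-c/\tau}$ for $y$ in a compact set and exponential decay for large $\tau$. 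This is exactly the boundary heat-kernel derivative estimate deferred to Appendix~\ref{app:boundary-heat}. I would work on truncated cylinders $\Omega\times(\varepsilon,L)$ and pass to the limit.
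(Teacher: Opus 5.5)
Your overall architecture matches the paper's. The first-variation formula comes from Green's second identity between $\mathcal U_{\eta,y}$ and $\mathcal G_s$. The weighted energy identity for $\mathcal U_{\eta,y}$ uses the vanishing weighted flux at $z=0$. Then come differentiation in the pole, tangential integration by parts on $\partial\Omega$, and Hopf. The gap is in Step~2, in the normal component of $\partial_{x,\eta}\mathcal P_s$.

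Since $\mathcal P_s$ lives only on $\Sigma_\Omega$, your formula $\partial_{y,\eta}\mathcal P_s=-\partial_\nu\mathcal U_{\eta,y}-\partial_{x,\eta}\mathcal P_s$ requires extending $\mathcal P_s$ off the boundary, e.g.\ as $-\tilde\nu\cdot\nabla_x\mathcal G_s$ with $|\tilde\nu|=1$. Then $\partial_{x,\eta}\mathcal P_s=\nabla_{\partial\Omega}\mathcal P_s\cdot\eta_T+a\,\partial_\nu\mathcal P_s$, with $a\,\partial_\nu\mathcal P_s=-a\,D_x^2\mathcal G_s[\nu,\nu]$. Your tangential integration by parts handles only the first piece. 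It produces $\operatorname{div}_{\partial\Omega}(a\eta_T)=\II(\eta_T,\eta_T)-H_{\partial\Omega}a^2$, which is the mean-curvature term with the wrong sign; this is the only place the ``divergence of $\nu$'' enters. The second piece is never evaluated. So ``collecting the terms'' does not yet give \eqref{eq:main-identity}, and $\II(\eta_T,\eta_T)-H_{\partial\Omega}a^2$ is not sign-definite even on convex domains.

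The missing ingredient is the identity $D_x^2\mathcal G_s[\nu,\nu]=H_{\partial\Omega}\mathcal P_s$ on $\Sigma_\Omega$. This is where the extension equation must be used on the lateral boundary.
\begin{itemize}
\item Because $\mathcal G_s(\xi,\cdot\,;y)\equiv0$ for every $\xi\in\partial\Omega$, one has $\partial_z\mathcal G_s=\partial_{zz}\mathcal G_s=0$ on $\Sigma_\Omega$.
\item The expanded equation $\Delta_x\mathcal G_s+\partial_{zz}\mathcal G_s+\frac{1-2s}{z}\partial_z\mathcal G_s=0$ therefore reduces to $\Delta_x\mathcal G_s=0$ there.
\item The decomposition $\Delta_xf=D^2f[\nu,\nu]+H_{\partial\Omega}\partial_\nu f+\Delta_{\partial\Omega}(f|_{\partial\Omega})$, together with $\mathcal G_s|_{\partial\Omega}=0$, gives the claim. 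This step needs $\mathcal G_s(\cdot,z;y)\in C^2(\overline\Omega)$ for $z>0$, which the boundary heat-kernel lemma supplies.
\end{itemize}
This term contributes $+2\kappa_s\int_{\Sigma_\Omega}z^{1-2s}H_{\partial\Omega}a^2\mathcal P_s^2$. The tally $\II(\eta_T,\eta_T)-H_{\partial\Omega}a^2+2H_{\partial\Omega}a^2$ then yields exactly the boundary term of \eqref{eq:main-identity}.

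Your Step~3 is a valid variant: decay in $z$ forces the constant to vanish, and Hopf at every boundary point forces $\nu\cdot\eta\equiv0$. The paper avoids the decay argument by applying Hopf only at the two boundary points where $\nu=\pm\eta/|\eta|$. There $\mathcal U_{\eta,y}$ takes values of opposite sign, which already contradicts constancy.
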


\begin{remark}
\label{rem:classical-translation-curvature}
For the classical Dirichlet Laplacian ($s=1$), Li--Liu--Ma
\cite{LiLiuMa} recently derived an exact second-order identity for the
classical Robin function on bounded smooth convex domains.  Their formula
likewise separates a strictly positive interior simultaneous-translation
energy from a nonnegative boundary-curvature contribution and yields global
positive definiteness of the classical Robin Hessian.  \cref{prop:main} is the spectral-fractional analogue in the range $0<s<1$,
but it is not obtained by taking a formal limit of the classical identity.
The proof instead uses the weighted extension cylinder
$\Omega\times(0,\infty)$, identifies the singular conormal measure at $z=0$,
controls the simultaneous-translation field up to $z=0$, and justifies the
lateral differentiations by boundary heat-kernel estimates.  The weighted identity also provides the initial input for the finite-part
doubling argument of \cref{sec:real-order-doubling}, a role that has no
counterpart in the classical $s=1$ argument.
\end{remark}

\subsection{Boundary heat-kernel derivative estimates}

The second-order argument requires only finite-order parabolic boundary
regularity, which we isolate in the following lemma.  For a flat lateral
boundary with homogeneous Dirichlet data, we use Lieberman's local
$H^{2+\vartheta}$ Schauder estimate
\cite[Theorem~4.22, p.~70]{Lieberman1996}.  The stability of the hypotheses
under an $H^{2+\vartheta}$ change of variables, as required when flattening a
$C^{2,\vartheta}$ boundary, is discussed in \cite[p.~76]{Lieberman1996}.
The standard localization and approximation procedure from Chapter~V,
together with the flat-boundary existence and up-to-boundary
$H^{2+\vartheta}$ regularity result
\cite[Theorem~5.13, pp.~93--94]{Lieberman1996}, provides the required
boundary regularity without any a priori assumption on second boundary
derivatives of the caloric function.  In particular, no derivative of the
boundary of order greater than two enters the argument.

\begin{lemma}
\label{lem:boundary-heat}
Let $0<\vartheta<1$, let $\Omega$ be a bounded domain of class
$C^{2,\vartheta}$, and let $K\Subset\Omega$.  Put
$d:=\dist(K,\partial\Omega)>0$.  There exists $\rho_0>0$ such that, for every
pair of multiindices $\mu,\beta$ with $|\mu|\le1$ and $|\beta|\le2$, there
are constants $C,c>0$ for which
\begin{equation}\label{eq:heat-small-boundary}
 \sup_{\substack{x\in\overline\Omega,\,\dist(x,\partial\Omega)\le\rho_0\\
                  y\in K}}
 |D_y^\mu D_x^\beta p_\Omega(t,x,y)|
 \le C t^{-(N+|\mu|+|\beta|)/2}e^{-c/t},
 \qquad0<t\le1.
\end{equation}
The derivatives in $x$ extend continuously to the lateral boundary.  Moreover,
after changing $C,c$ if necessary,
\begin{equation}\label{eq:heat-large-boundary}
 \sup_{\substack{x\in\overline\Omega,\,\dist(x,\partial\Omega)\le\rho_0\\
                  y\in K}}
 |D_y^\mu D_x^\beta p_\Omega(t,x,y)|
 \le Ce^{-ct},
 \qquad t\ge1.
\end{equation}
For small time the constants depend only on $N,\vartheta,d$ and on a fixed
uniform $C^{2,\vartheta}$ boundary character of $\Omega$; no boundary
derivative of order larger than two is used.
\end{lemma}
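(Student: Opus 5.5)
The plan is to treat the small-time bound \eqref{eq:heat-small-boundary} by parabolic localization near $\partial\Omega$ combined with the interior off-diagonal smallness of $p_\Omega$. The large-time bound \eqref{eq:heat-large-boundary} will then follow from the semigroup property and spectral decay.

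\textbf{Small time.} Choose $\rho_0\le d/4$, so that the boundary strip $S:=\{x\in\overline\Omega:\dist(x,\partial\Omega)\le 2\rho_0\}$ stays at distance at least $d/2$ from $K$.

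First, for fixed $y\in K$ and $|\mu|\le1$, the function $u(x,t):=D_y^\mu p_\Omega(t,x,y)$ is caloric in $\Omega\times(0,\infty)$ and vanishes on the lateral boundary. The $y$-derivative commutes with the equation, and the vanishing persists because $p_\Omega(t,\cdot,y')=0$ on $\partial\Omega$ for every $y'$.

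Second, we bound $u$ itself on the enlarged strip $S_*:=\{\dist(x,\partial\Omega)\le3\rho_0\}$ for $0<t\le1$. Gaussian domination gives $0\le p_\Omega\le g_t$, hence $|p_\Omega|\le Ct^{-N/2}e^{-d^2/(16t)}$ there. For the $y$-derivative we use $p_\Omega=g_t-q_\Omega$: we differentiate $g_t$ explicitly via \eqref{eq:gaussian-derivative}, and handle $D_y^\mu q_\Omega$ by \cref{lem:fractional-one-endpoint}, which is uniform in $x\in\Omega$ for $y\in K$. Together these give $|u|\le Ct^{-(N+1)/2}e^{-c/t}$ on $S_*\times(0,1]$. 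Since $u(\cdot,0)=0$ on $S_*$ in the limiting sense (the initial data is concentrated at $y\notin S_*$), the same bound holds up to $t=0$.

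Third, we apply Lieberman's boundary Schauder estimate \cite[Theorem~4.22]{Lieberman1996} after flattening a $C^{2,\vartheta}$ boundary patch, as in \cite[p.~76]{Lieberman1996}. We use it in parabolic cylinders $Q_r(x_0,t_0)$ with $x_0\in\partial\Omega$ and radius $r=\min(\rho_0,\sqrt{t_0}/2)$, or with $r=\rho_0$ using zero extension to negative times, which is legitimate since $u$ and all its derivatives vanish exponentially as $t\downarrow0$. Existence of the needed $H^{2+\vartheta}$ regularity up to the boundary, without a priori information, is supplied by \cite[Theorem~5.13]{Lieberman1996}. Scaling yields $|D_x^\beta u|\le Cr^{-|\beta|}\sup_{Q_{2r}}|u|$ for $|\beta|\le2$, with continuity up to $\partial\Omega$. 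Each factor $r^{-1}\lesssim t^{-1/2}$, which produces the factor $t^{-(N+|\mu|+|\beta|)/2}e^{-c/t}$. Interior points of the strip are handled by interior estimates in the same way.

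\textbf{Large time.} For $t\ge1$ we write
\[
 p_\Omega(t,x,y)=\int_\Omega p_\Omega(1/2,x,w)\,p_\Omega(t-1/2,w,y)\,\dd w.
\]
The $x$-derivatives fall on the first factor and are bounded uniformly on the strip by the small-time estimate at the fixed time $t=1/2$ (the bound applies for $w$ in the strip; for the remaining $w$ we use interior estimates). The $y$-derivative is treated symmetrically by splitting $p_\Omega(t-1/2,w,y)=\int p_\Omega(t-1,w,v)\,p_\Omega(1/2,v,y)\,\dd v$. The middle kernel is then controlled in $L^2$ by $e^{-\lambda_1(t-1)}$, exactly as in the proof of \cref{lem:real-order-analytic}. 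This gives $Ce^{-ct}$.

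The main obstacle is the third step: justifying the boundary Schauder estimate with exactly $C^{2,\vartheta}$ boundary regularity and two $x$-derivatives, uniformly down to $t\to0$. This requires a careful rescaling argument to obtain the $t^{-|\beta|/2}$ factors while keeping the constants dependent only on the uniform boundary character.
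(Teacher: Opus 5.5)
Your strategy is essentially the paper's. You flatten a uniform $C^{2,\vartheta}$ chart and apply Lieberman's boundary Schauder estimate (Theorem~4.22, with Theorem~5.13 supplying a priori regularity) at parabolic scale $r\asymp\sqrt t$, feeding in an off-diagonal sup bound on $D_y^\mu p_\Omega$; for $t\ge1$ you use the semigroup factorization with the spectral gap.

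The rescaling you single out as the main obstacle is routine once the flattened operator is written out, which is what the paper does:
\begin{itemize}
\item the principal coefficients involve only $D\varphi$, with $a^{ij}\zeta_i\zeta_j=|\zeta'-D\varphi\,\zeta_N|^2+\zeta_N^2$;
\item the drift is $b^N=-\Delta'\varphi$;
\item under $Z\mapsto rZ$ the H\"older seminorms gain a factor $r^\vartheta$ and the drift becomes $r\,b(r\cdot)$, so Lieberman's constant is uniform for $r\le1$;
\item returning from $z$- to $x$-derivatives adds only $D^2\varphi\,D_zu$, which is absorbed since $r\le1$.
\end{itemize}

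The one real difference is the pole-derivative input. You get $|D_y^\mu p_\Omega|\le Ct^{-(N+1)/2}e^{-c/t}$ on the collar from $p_\Omega=g_t-q_\Omega$ and \cref{lem:fractional-one-endpoint}. The paper instead uses a scale-invariant interior parabolic derivative estimate in $y$ on cylinders of radius $\sim\sqrt t$. Your version reuses the Dynkin--Hunt machinery already available, holds uniformly for all $x\in\Omega$, and makes the dependence on $N,d$ alone transparent. The paper's version stays inside standard parabolic regularity.

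Your alternative with fixed scale $r=\rho_0$ and zero extension to $t<0$ would avoid rescaling altogether, since $t\mapsto t^{-M}e^{-c/t}$ is increasing for small $t$. However, you then need to show that the extension solves the problem up to the flat face using only the uniform vanishing of $u$. Asserting that all derivatives of $u$ vanish as $t\downarrow0$ presupposes the boundary estimate you are trying to prove.

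There is a genuine gap in your large-time step. To bound $\|D_x^\beta p_\Omega(1/2,x,\cdot)\|_{L^2(\Omega)}$ uniformly for $x$ in the collar, you need a bound on $D_x^\beta p_\Omega(1/2,x,w)$ that is uniform in \emph{every} $w\in\Omega$.
\begin{itemize}
\item Your small-time estimate covers only poles in the fixed compact set $K$, because its proof uses the separation $d$ between pole and collar. It gives nothing for $w$ in the boundary strip, in particular for $w$ close to $x$.
\item Interior estimates cannot control $x$-derivatives at points $x$ near $\partial\Omega$.
\end{itemize}
Your parenthetical assigns these two tools to the wrong regions, and the case of a pole near the boundary is left uncovered.

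The missing step is what the paper does at time $1$. Apply the flattened Schauder estimate at a \emph{fixed} scale, using only the separation-free input $0\le p_\Omega(s,x,w)\le(4\pi s)^{-N/2}$ for $s\in[1/4,1/2]$. This gives $|D_x^\beta p_\Omega(1/2,x,w)|\le C$ uniformly in $w\in\Omega$, including $w\to\partial\Omega$.

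A smaller point: for $|\mu|=1$, the zero lateral trace of $D_y^\mu p_\Omega(t,\cdot,y)$ does not follow just from $p_\Omega(t,\xi,y')=0$ for all $y'$. That argument interchanges the limit $x\to\xi$ with a $y$-derivative, which needs some uniformity. One way is the paper's identity $D_y^\mu p_\Omega(t,x,y)=\int_\Omega p_\Omega(t/2,x,z)D_y^\mu p_\Omega(t/2,z,y)\,\dd z$ combined with dominated convergence.
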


The complete proof is given in Appendix~\ref{app:boundary-heat}.  The
finite-order regularity required here is also reflected directly in the
flattened operator: after flattening a boundary
graph $x_N=\varphi(x')$, the principal coefficients of the transformed heat
operator involve only $D\varphi$, while the first-order coefficient involves
only $D^2\varphi$.  Parabolic scaling preserves uniform ellipticity and gives
uniform $C^{0,\vartheta}$ coefficient bounds; the flat-boundary Schauder
estimate therefore controls the required $x$ derivatives up to order two.
The exponential factor $e^{-c/t}$ comes only from the positive separation
between the compact set of pole locations and the lateral boundary.  Thus no boundary
derivative of order three or higher is used.

\subsection{Extension estimates and the simultaneous translation field}

Recall $q_\Omega(t,x,y)=g_t(x-y)-p_\Omega(t,x,y)$ from
\cref{sec:robin-transfer}.  Only finite-order interior defect estimates are required below, and these are
already contained in
\cref{lem:finite-order-heat-defect}.  In particular, for any fixed
$K_0\Subset\Omega$ and for all derivatives of total order at most two,
\begin{equation}\label{eq:q-mixed-finite}
 \sup_{x,y\in K_0}|D_x^\alpha D_y^\beta q_\Omega(t,x,y)|
 \le Ct^{-P}e^{-c/t},
 \qquad 0<t\le1,
\end{equation}
and the corresponding one-endpoint estimates hold with one variable ranging
over all of $\Omega$.  We combine these interior estimates with the boundary estimate
\cref{lem:boundary-heat}.

\begin{lemma}
\label{lem:extension-bounds}
Let $K\Subset\Omega$ and let $|\eta|=1$.  There exist $C,c>0$, uniform for
$y\in K$, such that the following hold.

\begin{enumerate}[label=\textup{(\roman*)},leftmargin=2.7em]
\item For $\xi\in\partial\Omega$ and $0<z\le1$,
\begin{equation}\label{eq:P-small}
 |\mathcal P_s(\xi,z;y)|
 +|D_y\mathcal P_s(\xi,z;y)|
 +|\nabla_{\partial\Omega}\mathcal P_s(\xi,z;y)|
 \le C.
\end{equation}
For $z\ge1$ the left-hand side is bounded by $Ce^{-cz}$.

\item For $z\ge1$ the Green extension satisfies
\begin{equation}\label{eq:G-large}
 \sup_{\substack{x\in\overline\Omega\\y\in K}}
 \bigl(|\mathcal G_s(x,z;y)|+|\nabla_{x,z}\mathcal G_s(x,z;y)|\bigr)
 \le Ce^{-cz}.
\end{equation}

\item For every $x\in\overline\Omega$ and $0<z\le1$,
\begin{equation}\label{eq:U-small}
 |\mathcal U_{\eta,y}(x,z)|
 +|\nabla_x\mathcal U_{\eta,y}(x,z)|\le C,
 \qquad
 |\partial_z\mathcal U_{\eta,y}(x,z)|\le Cz.
\end{equation}
For $z\ge1$,
\begin{equation}\label{eq:U-large}
 |\mathcal U_{\eta,y}|+|\nabla_{x,z}\mathcal U_{\eta,y}|
 \le Ce^{-cz}.
\end{equation}

\item The field $\mathcal U_{\eta,y}$ is $C^\infty$ in the open cylinder
$\Omega\times(0,\infty)$; on every slab
$\overline\Omega\times[z_1,z_2]$ with $0<z_1<z_2<\infty$ it extends
continuously together with its first $x$ derivatives to the lateral boundary,
by the heat representation, \cref{lem:boundary-heat}, and dominated
convergence.  It extends continuously to
$\overline\Omega\times\{0\}$ and satisfies
\begin{equation}\label{eq:U-weighted-harmonic-early}
 -\operatorname{div}_{x,z}\bigl(z^{1-2s}\nabla_{x,z}\mathcal U_{\eta,y}\bigr)=0
 \qquad\text{in }\mathcal C_\Omega.
\end{equation}
Its trace satisfies the quadratic convergence estimate
\begin{equation}\label{eq:U-trace-rate}
 \|\mathcal U_{\eta,y}(\cdot,z)
 -\mathcal U_{\eta,y}(\cdot,0)\|_{L^\infty(\Omega)}
 \le Cz^2,
 \qquad0<z\le1.
\end{equation}
Moreover,
\begin{equation}\label{eq:U-energy}
 \int_{\mathcal C_\Omega}
 z^{1-2s}|\nabla_{x,z}\mathcal U_{\eta,y}|^2\,\dd x\,\dd z<\infty,
\end{equation}
and
\begin{equation}\label{eq:U-bottom-flux-zero}
 \lim_{z\downarrow0}
 \|z^{1-2s}\partial_z\mathcal U_{\eta,y}(\cdot,z)\|_{L^\infty(\Omega)}=0.
\end{equation}
\end{enumerate}
\end{lemma}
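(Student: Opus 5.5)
The plan is to work throughout from the heat representation \eqref{eq:extension-G}, and to split the $\tau$-integral at $\tau=1$. We use \cref{lem:boundary-heat} for $x$ near $\partial\Omega$ and \cref{lem:finite-order-heat-defect} together with the Gaussian splitting $p_\Omega=g_\tau-q_\Omega$ for $x$ in the interior.

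\textbf{Part (i).} For $\mathcal P_s$ and its derivatives at $\xi\in\partial\Omega$, insert \eqref{eq:heat-small-boundary} with $|\beta|\le2$ and $|\mu|\le1$. The tangential gradient costs one extra $x$-derivative, so it stays within the range $|\beta|\le2$. The factor $e^{-z^2/(4\tau)}\le1$ is harmless for small $\tau$. The integrand is then bounded by $\tau^{s-1-(N+3)/2}e^{-c/\tau}$ on $(0,1]$, and by $\tau^{s-1}e^{-c\tau}$ on $[1,\infty)$ via \eqref{eq:heat-large-boundary}. This gives the bound $C$.

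For $z\ge1$, combine the two factors:
\[
 e^{-z^2/(4\tau)}e^{-c/\tau}\le e^{-cz/\tau^{1/2}\cdot(\ldots)}
\]
for small $\tau$, and $e^{-z^2/(4\tau)-c\tau}\le e^{-\sqrt c\,z}$ by AM--GM for large $\tau$. This yields $Ce^{-cz}$.

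\textbf{Part (ii).} For $z\ge1$, use $0\le p_\Omega\le g_\tau$ and $p_\Omega(\tau)\le Ce^{-\lambda_1\tau}$ for $\tau\ge1$; the latter follows by the semigroup trick \eqref{eq:p-large-semigroup-zero} with $L^2$ bounds of $p_\Omega(1/2,x,\cdot)$ uniform in $x$. Gradient bounds in $x$ uniform up to $\overline\Omega$ come from \cref{lem:boundary-heat} near the boundary, and from Gaussian derivative bounds \eqref{eq:gaussian-derivative} minus the defect in the interior. Again AM--GM gives $e^{-cz}$. Note that the singular region $x$ near $y$ is tamed by $e^{-z^2/4\tau}$ when $z\ge1$.

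\textbf{Part (iii), the main obstacle.} The key observation is that $(\partial_{x,\eta}+\partial_{y,\eta})g_\tau(x-y)=0$. Hence
\[
 \mathcal U_{\eta,y}=-\frac1{\Gamma(s)}\int_0^\infty\tau^{s-1}e^{-z^2/(4\tau)}(\partial_{x,\eta}+\partial_{y,\eta})p_\Omega\,\dd\tau
 =\frac1{\Gamma(s)}\int_0^\infty\tau^{s-1}e^{-z^2/(4\tau)}(\partial_{x,\eta}+\partial_{y,\eta})q_\Omega\,\dd\tau,
\]
which removes the diagonal singularity. For $x\in K_0$ (interior), use the factor $(\partial_x+\partial_y)$ on $q_\Omega$ and \eqref{eq:q-mixed-finite}. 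For $x$ near $\partial\Omega$, write the integrand through $p_\Omega$ and use \cref{lem:boundary-heat} with $|\beta|\le2$ and $|\mu|\le1$. In both cases the integrand, with up to one further $x$-derivative, is dominated by $\tau^{s-1-P}e^{-c/\tau}$ for $\tau\le1$ and by $\tau^{s-1}e^{-c\tau}$ for $\tau\ge1$. This gives the uniform bounds on $\mathcal U$ and $\nabla_x\mathcal U$.

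For $\partial_z$, differentiate $e^{-z^2/4\tau}$ to get the factor $-\tfrac{z}{2\tau}e^{-z^2/4\tau}$. The extra $\tau^{-1}$ is absorbed by $e^{-c/\tau}$ at small $\tau$, and at large $\tau$ by $\tau^{s-2}e^{-c\tau}$. This gives $|\partial_z\mathcal U|\le Cz$.

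The same computation handles the trace rate \eqref{eq:U-trace-rate}, using
\[
 |e^{-z^2/4\tau}-1|\le z^2/(4\tau),
\]
where the $\tau^{-1}$ is again integrable against the exponentially small-at-zero, exponentially decaying weight. Then \eqref{eq:U-large} follows as in (ii).

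\textbf{Part (iv).} Smoothness in the open cylinder and \eqref{eq:U-weighted-harmonic-early} come from differentiating \eqref{eq:ext-PDE} in $y$ and using translation invariance of the constant-coefficient operator in $x$. Equivalently, each $\tau$-slice satisfies the weighted equation, since $e^{-z^2/4\tau}\tau^{s-1}$ solves the corresponding ODE and $p_\Omega$ solves the heat equation; the derivatives pass under the integral by the dominations above.

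Continuity up to $z=0$ and up to the lateral boundary follows by dominated convergence with the same majorants.

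For the energy \eqref{eq:U-energy}, on $z\le1$ the integrand is at most $z^{1-2s}(C+Cz^2)$, which is integrable since $1-2s>-1$ and $\Omega$ is bounded. On $z\ge1$, \eqref{eq:U-large} gives integrability.

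Finally, \eqref{eq:U-bottom-flux-zero} follows from
\[
 z^{1-2s}|\partial_z\mathcal U|\le Cz^{2-2s}\to0,
\]
since $s<1$.

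The delicate point I expect is the uniformity of the $x$-dominations up to $\partial\Omega$ for the $y$-derivative term. Here one must avoid the Gaussian splitting and rely solely on \cref{lem:boundary-heat}. The overlap region between the "near boundary" layer $\dist(x,\partial\Omega)\le\rho_0$ and the interior compact set $K_0=\{\dist\ge\rho_0\}$ must be chosen so that the two estimates cover $\overline\Omega$.
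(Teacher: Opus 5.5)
Your proposal is correct and follows the paper's proof essentially step for step. Both use the heat representation split at $\tau=1$, \cref{lem:boundary-heat} on the boundary collar, the cancellation $S_\eta g_\tau(x-y)=0$ with the interior defect estimates, the factor $z/(2\tau)$ produced by $\partial_z$, and AM--GM for $z\ge1$. The only variation is that you get the trace rate directly from $|e^{-z^2/(4\tau)}-1|\le z^2/(4\tau)$ instead of integrating $|\partial_z\mathcal U|\le Cz$, which is equivalent.

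Three small repairs are needed:
\begin{enumerate}
\item In the AM--GM step, keep half of $e^{-c\tau}$, e.g.\ $z^2/(4\tau)+c\tau/2\ge\sqrt{c/2}\,z$. Otherwise the $\tau$-integral over $[1,\infty)$ diverges.
\item For interior $x$ and large $\tau$, the derivative bounds must come from the semigroup/spectral-gap factorization used in \cref{lem:real-order-analytic}. The estimate \eqref{eq:q-mixed-finite} and the one-endpoint defect bounds hold only for small time.
\item Your displayed formula for $\mathcal U_{\eta,y}$ has an overall sign flipped. This is harmless for the absolute-value bounds here.
\end{enumerate}
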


\begin{proof}
From \eqref{eq:extension-G} and differentiation under the integral sign,
justified initially for $z>0$, one has on the lateral boundary
\begin{equation}\label{eq:P-heat}
 \mathcal P_s(\xi,z;y)
 =-\frac1{\Gamma(s)}\int_0^\infty
 \tau^{s-1}e^{-z^2/(4\tau)}
 \partial_{\nu_\xi}p_\Omega(\tau,\xi,y)\,\dd\tau.
\end{equation}
The same formula holds after one $y$ derivative.  For a tangential vector
$\tau$, differentiating $-\nu\cdot\nabla_xp_\Omega$ gives a linear
combination of $D_x^2p_\Omega[\tau,\nu]$ and
$\nabla_xp_\Omega\cdot D_\tau\nu$.  Since
$\nu\in C^{1,\vartheta}(\partial\Omega)$ and
\cref{lem:boundary-heat} controls the first two $x$ derivatives, the same
heat representation and domination apply to
$\nabla_{\partial\Omega}\mathcal P_s$.  By \cref{lem:boundary-heat}, for $0<\tau\le1$ each resulting
integrand is bounded by
\[
 C\tau^{s-1-M}e^{-c/\tau},
\]
which is integrable at $0$ for every fixed $M$.  For $\tau\ge1$ it is
bounded by $C\tau^{s-1}e^{-c\tau}$, which is integrable at infinity.  Since
$e^{-z^2/(4\tau)}\le1$, this proves \eqref{eq:P-small}.

For $z\ge1$, split the $\tau$ integral into $(0,1)$ and $(1,\infty)$.  On
$(0,1)$,
\[
 e^{-z^2/(4\tau)}\le e^{-z^2/4}.
\]
On $(1,\infty)$, for some $c_1,c_2>0$,
\[
 c_1\tau+\frac{z^2}{4\tau}
 \ge c_2z
\]
by the arithmetic--geometric mean inequality.  Hence
\[
 e^{-c_1\tau-z^2/(4\tau)}
 \le e^{-c_2z}e^{-c_1\tau/2},
\]
and the claimed exponential decay follows.

We also require the corresponding large-$z$ estimate for the Green
extension.  For $z\ge1$, split \eqref{eq:extension-G} into
$\tau\in(0,1)$ and $\tau\in(1,\infty)$.  On $(0,1)$, the Gaussian bound
$p_\Omega\le g_\tau$ gives the zeroth-order estimate.  For one $x$
derivative, cover $\overline\Omega$ by a fixed boundary collar and an interior
compact set.  The scale-invariant boundary estimate used in
\cref{lem:boundary-heat} on the collar, and the standard interior parabolic
derivative estimate on the compact set, imply the global polynomial bound
\[
 \sup_{\substack{x\in\overline\Omega\\y\in K}}
 |\nabla_xp_\Omega(\tau,x,y)|\le C\tau^{-M_1},
 \qquad0<\tau\le1.
\]
No off-diagonal exponential factor is needed here.  A $z$ derivative of the
extension kernel introduces the factor $z/(2\tau)$.  Consequently every
small-time term needed for
$|\mathcal G_s|+|\nabla_{x,z}\mathcal G_s|$ is bounded by an integral of the
form
\[
 C z^j\int_0^1\tau^{-M}e^{-z^2/(4\tau)}\,\dd\tau,
 \qquad j\in\{0,1\}.
\]
With $u=z^2/(4\tau)$ this is bounded by $Ce^{-c z^2}$ for $z\ge1$ (after
absorbing the resulting polynomial factor in $z$ into the Gaussian decay).
On $(1,\infty)$, the fixed-time derivative bounds and semigroup
factorization used in the proof of \cref{lem:boundary-heat}, together with
the spectral gap, give the required finite-order bounds by
$C\tau^M e^{-c_0\tau}$, while
\[
 c_0\tau+\frac{z^2}{4\tau}\ge c_1z.
\]
After retaining half of the factor $e^{-c_0\tau}$ for integrability, this
yields \eqref{eq:G-large}.

We turn to the simultaneous translation field.  Let
\[
 S_\eta:=\partial_{x,\eta}+\partial_{y,\eta}.
\]
Because the free heat kernel depends only on $x-y$,
\begin{equation}\label{eq:Sg-zero}
 S_\eta g_\tau(x-y)=0.
\end{equation}
Consequently, whenever $x,y$ belong to a fixed interior compact set,
\[
 S_\eta p_\Omega=-S_\eta q_\Omega.
\]
Choose a thin boundary collar as in \cref{lem:boundary-heat}, and let
$K_0\Subset\Omega$ be a compact set containing $K$ together with the
complement of a slightly thinner collar.  On $K_0\times K_0$,
\eqref{eq:q-mixed-finite} gives
\begin{equation}\label{eq:Sp-interior}
 |S_\eta p_\Omega(\tau,x,y)|
 +|\nabla_xS_\eta p_\Omega(\tau,x,y)|
 \le C\tau^{-M}e^{-c/\tau},
 \qquad0<\tau\le1.
\end{equation}
In the boundary collar the same estimate follows from
\cref{lem:boundary-heat}.  Therefore
\begin{equation}\label{eq:Sp-global-small}
 \sup_{\substack{x\in\overline\Omega\\y\in K}}
 \Bigl(|S_\eta p_\Omega|+|\nabla_xS_\eta p_\Omega|\Bigr)
 \le C\tau^{-M}e^{-c/\tau},
 \qquad0<\tau\le1.
\end{equation}
The large-time semigroup argument in the proof of
\cref{lem:boundary-heat}, together with fixed-time smoothing, yields
\begin{equation}\label{eq:Sp-global-large}
 \sup_{\substack{x\in\overline\Omega\\y\in K}}
 \Bigl(|S_\eta p_\Omega|+|\nabla_xS_\eta p_\Omega|\Bigr)
 \le Ce^{-c\tau},
 \qquad\tau\ge1.
\end{equation}

Differentiating \eqref{eq:extension-G} in the simultaneous translation
direction and using \eqref{eq:Sp-global-small}--\eqref{eq:Sp-global-large}
gives, for $z>0$,
\begin{equation}\label{eq:U-heat}
 \mathcal U_{\eta,y}(x,z)
 =\frac1{\Gamma(s)}\int_0^\infty
 \tau^{s-1}e^{-z^2/(4\tau)}
 S_\eta p_\Omega(\tau,x,y)\,\dd\tau.
\end{equation}
The same bounds with the exponential factor removed from the extension
kernel provide an integrable majorant at $z=0$.  We therefore define
\begin{equation}\label{eq:U-trace-definition}
 \mathcal U_{\eta,y}(x,0)
 :=\frac1{\Gamma(s)}\int_0^\infty
 \tau^{s-1}S_\eta p_\Omega(\tau,x,y)\,\dd\tau,
\end{equation}
and dominated convergence shows that \eqref{eq:U-heat} converges to this
trace uniformly for $x\in\overline\Omega$, $y\in K$.  The same estimates
after one $x$ derivative prove the first part of \eqref{eq:U-small}, and the
same large-$z$ argument used above proves \eqref{eq:U-large} for
$\mathcal U$ and $\nabla_x\mathcal U$.

For $z>0$ the factor $e^{-z^2/(4\tau)}$ removes the small-time diagonal
singularity, so \eqref{eq:U-heat} may be differentiated in $x,y,z$ as needed.
Differentiating the extension equation \eqref{eq:ext-PDE} in the horizontal
variables $x$ and $y$, or equivalently differentiating the heat
representation, gives \eqref{eq:U-weighted-harmonic-early}.  This establishes the weighted harmonic equation before it is used in the
Green identities below.

Differentiating \eqref{eq:U-heat} with respect to $z$ gives
\begin{equation}\label{eq:Uz-heat}
 \partial_z\mathcal U_{\eta,y}(x,z)
 =-\frac{z}{2\Gamma(s)}\int_0^\infty
 \tau^{s-2}e^{-z^2/(4\tau)}
 S_\eta p_\Omega(\tau,x,y)\,\dd\tau.
\end{equation}
The integral on the right is uniformly bounded for $0\le z\le1$: near
$\tau=0$ the factor $e^{-c/\tau}$ from
\eqref{eq:Sp-global-small} absorbs every negative power of $\tau$, while
for $\tau\ge1$ the spectral-gap decay is integrable.  Hence
$|\partial_z\mathcal U|\le Cz$, proving the second estimate in
\eqref{eq:U-small}.  For $0<\delta<z$, the fundamental theorem of calculus
gives
\[
 \mathcal U(x,z)-\mathcal U(x,\delta)
 =\int_\delta^z\partial_\zeta\mathcal U(x,\zeta)\,\dd\zeta.
\]
Letting $\delta\downarrow0$ by the uniform trace convergence following
\eqref{eq:U-trace-definition}, and using $|\partial_\zeta\mathcal U|\le C\zeta$,
proves \eqref{eq:U-trace-rate}.  For $z\ge1$, the estimate for $\partial_z\mathcal U$ follows by splitting
the integral in \eqref{eq:Uz-heat} at $\tau=1$, using
\eqref{eq:Sp-global-small} for $0<\tau\le1$ and the spectral-gap estimate
for $\tau\ge1$.

Finally,
\[
 \int_0^1 z^{1-2s}|\nabla_x\mathcal U|^2\,\dd z
 \le C\int_0^1z^{1-2s}\,\dd z<\infty
\]
because $s<1$, while
\[
 \int_0^1 z^{1-2s}|\partial_z\mathcal U|^2\,\dd z
 \le C\int_0^1z^{3-2s}\,\dd z<\infty.
\]
The tail $z\ge1$ is integrable by exponential decay.  This proves
\eqref{eq:U-energy}.  Moreover,
\[
 z^{1-2s}|\partial_z\mathcal U|
 \le Cz^{2-2s}\longrightarrow0
\]
uniformly in $x$, proving \eqref{eq:U-bottom-flux-zero}.
\end{proof}

\subsection{The conormal measure at \texorpdfstring{$z=0$}{z=0} and Green's first identity}

The next lemma gives an explicit form of the boundary limit at $z=0$ in the
Green representation.  This form is needed because the test function itself
depends on $z$.

\begin{lemma}
\label{lem:first-green}
Fix $K\Subset\Omega$, $y\in K$, and $\eta\in\R^N$.  For $z>0$ define the
positive measure
\begin{equation}\label{eq:mu-z}
 \dd\mu_z(x)
 :=-\kappa_s z^{1-2s}
 \partial_z\mathcal G_s(x,z;y)\,\dd x.
\end{equation}
Then
\begin{equation}\label{eq:mu-mass}
 0\le\mu_z(\Omega)\le1,
 \qquad
 \mu_z(\Omega)\longrightarrow1
 \quad(z\downarrow0),
\end{equation}
and
\begin{equation}\label{eq:mu-weak}
 \mu_z\rightharpoonup\delta_y
 \qquad\text{weakly against }C(\overline\Omega).
\end{equation}
Moreover,
\begin{align}
 \mathcal U_{\eta,y}&=-a\mathcal P_s
 &&\text{on }\Sigma_\Omega,\label{eq:U-lateral}\\
 \mathcal U_{\eta,y}(y,0)&=-DR_{s,\Omega}(y)[\eta],\label{eq:U-pole}
\end{align}
and the Robin function satisfies the first-variation formula
\begin{equation}\label{eq:first-variation}
 \boxed{
 DR_{s,\Omega}(y)[\eta]
 =\kappa_s
 \int_{\Sigma_\Omega}
 z^{1-2s}a(\xi)\mathcal P_s(\xi,z;y)^2
 \,\dd S_\xi\,\dd z.}
\end{equation}
\end{lemma}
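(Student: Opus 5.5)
The plan is to prove the four assertions in order: the measure properties \eqref{eq:mu-mass}--\eqref{eq:mu-weak}, then the identities \eqref{eq:U-lateral} and \eqref{eq:U-pole}, and finally to derive \eqref{eq:first-variation} from a weighted Green identity for the pair $(\mathcal U_{\eta,y},\mathcal G_s)$ on a truncated cylinder.

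\emph{The measures $\mu_z$.} For $z>0$, differentiate \eqref{eq:extension-G} under the integral sign, which is legitimate because the factor $e^{-z^2/(4\tau)}$ kills the small-time singularity. This gives
\[
 \dd\mu_z(x)=\int_0^\infty \rho_z(\tau)\,p_\Omega(\tau,x,y)\,\dd\tau\,\dd x,
 \qquad
 \rho_z(\tau):=\frac{\kappa_s z^{2-2s}}{2\Gamma(s)}\tau^{s-2}e^{-z^2/(4\tau)}\ge0 .
\]
The substitution $u=z^2/(4\tau)$ yields
\[
 \int_0^\infty\rho_z\,\dd\tau=\kappa_s\,2^{1-2s}\,\frac{\Gamma(1-s)}{\Gamma(s)}=1 .
\]
So $\rho_z$ is a probability density on $(0,\infty)$, and it concentrates at $\tau=0$ as $z\downarrow0$.

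For $f\in C(\overline\Omega)$, Tonelli's theorem gives $\int f\,\dd\mu_z=\int_0^\infty\rho_z(\tau)\,(e^{-\tau A}f)(y)\,\dd\tau$. Here I use the symmetry of $p_\Omega$, so that $\int_\Omega p_\Omega(\tau,x,y)f(x)\,\dd x=(e^{-\tau A}f)(y)$. Two facts then finish this step:
\begin{itemize}
 \item $\int_\Omega p_\Omega(\tau,x,y)\,\dd x\le1$, which gives $\mu_z(\Omega)\le1$;
 \item $(e^{-\tau A}f)(y)\to f(y)$ as $\tau\to0$ for the interior point $y$. This follows from \eqref{eq:not-feeling-boundary} together with Gaussian approximation of the identity.
\end{itemize}
Dominated convergence in $\tau$ then gives \eqref{eq:mu-mass} and \eqref{eq:mu-weak}.

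\emph{Lateral and pole identities.} For \eqref{eq:U-lateral}, note that $\mathcal G_s(\xi,z;y')=0$ for every $\xi\in\partial\Omega$ and every $y'$ near $y$. Hence $\partial_{y,\eta}\mathcal G_s=0$ on $\Sigma_\Omega$. The tangential $x$-derivatives also vanish there, so $\partial_{x,\eta}\mathcal G_s=a\,\partial_\nu\mathcal G_s=-a\mathcal P_s$. The continuity of first $x$-derivatives up to the lateral boundary, from \cref{lem:extension-bounds}(iv) and \cref{lem:boundary-heat}, makes this rigorous.

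For \eqref{eq:U-pole}, use $S_\eta g_\tau(x-y)=0$, i.e.\ \eqref{eq:Sg-zero}. For interior $x$ the trace \eqref{eq:U-trace-definition} becomes $-\Gamma(s)^{-1}\int_0^\infty\tau^{s-1}S_\eta q_\Omega\,\dd\tau$. By the differentiated Mellin formula in the proof of \cref{lem:real-order-analytic}, this equals $-(\partial_{x,\eta}+\partial_{y,\eta})H_{s,\Omega}(x,y)$. Evaluating at $x=y$ and applying the chain rule to $t\mapsto H_{s,\Omega}(y+t\eta,y+t\eta)$ gives $-DR_{s,\Omega}(y)[\eta]$.

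\emph{First variation.} Apply the weighted Green identity to $\mathcal U:=\mathcal U_{\eta,y}$ and $\mathcal G:=\mathcal G_s(\cdot,\cdot;y)$. Both are $z^{1-2s}$-weighted harmonic, by \eqref{eq:ext-PDE} and \eqref{eq:U-weighted-harmonic-early}. The domain is $\Omega\times(\delta,Z)$, which gives
\[
 0=\int_{\partial(\Omega\times(\delta,Z))}z^{1-2s}\bigl(\mathcal U\,\partial_n\mathcal G-\mathcal G\,\partial_n\mathcal U\bigr).
\]
The boundary pieces behave as follows.
\begin{itemize}
 \item Top face $z=Z$: it vanishes as $Z\to\infty$ by \eqref{eq:G-large} and \eqref{eq:U-large}.
 \item Lateral face: there $\mathcal G=0$, $\partial_n\mathcal G=-\mathcal P_s$ and $\mathcal U=-a\mathcal P_s$. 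It contributes $\int z^{1-2s}a\mathcal P_s^2$, which is integrable by \eqref{eq:P-small}.
 \item Bottom face $z=\delta$: the outward normal is $-e_z$, so it contributes
 \[
 \kappa_s^{-1}\int\mathcal U(\cdot,\delta)\,\dd\mu_\delta+\int\mathcal G(\cdot,\delta;y)\,\delta^{1-2s}\partial_z\mathcal U(\cdot,\delta)\,\dd x .
 \]
\end{itemize}
In the bottom contribution, the first term tends to $\kappa_s^{-1}\mathcal U(y,0)$. This uses \eqref{eq:U-trace-rate} (uniform convergence of the trace), $\mu_\delta(\Omega)\le1$, and \eqref{eq:mu-weak}. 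The second term tends to $0$ because $0\le\mathcal G(\cdot,\delta;y)\le K_{s,\Omega}(\cdot,y)\in L^1(\Omega)$ (recall $N>2s$), combined with \eqref{eq:U-bottom-flux-zero}. Together with \eqref{eq:U-pole}, this yields \eqref{eq:first-variation}.

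The main obstacle I expect is justifying the divergence theorem up to the lateral boundary. There we only know that $\mathcal U,\mathcal G$ and their first $x$-derivatives are continuous on $\overline\Omega\times[\delta,Z]$, while second derivatives may fail to be integrable up to $\partial\Omega$. I would handle this by applying the identity first on inner parallel domains $\Omega_\varepsilon\times(\delta,Z)$, where $\Omega_\varepsilon$ is $C^{1,\vartheta}$ for $\Omega\in C^{2,\vartheta}$. Letting $\varepsilon\to0$, the boundary integrands involve only values and first derivatives, so they converge uniformly on the slab by \cref{lem:extension-bounds}(iv).
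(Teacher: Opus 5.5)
Your proposal is correct and follows essentially the same route as the paper: you use the heat-kernel representation of $-\partial_z\mathcal G_s$ with the substitution $z^2/(4\tau)$ for the mass and weak limit of $\mu_z$, the vanishing of $\mathcal G_s$ on $\Sigma_\Omega$ for the lateral identity, and the weighted Green second identity on a truncated cylinder, with the bottom face controlled by \eqref{eq:U-trace-rate}, \eqref{eq:mu-mass}, \eqref{eq:mu-weak} and $0\le\mathcal G_s\le K_{s,\Omega}\in L^1$. The differences are minor refinements. You evaluate $\mathcal U_{\eta,y}(y,0)$ directly via $S_\eta g_\tau=0$ and the differentiated Mellin formula for $\widetilde H_{s,\Omega}$, where the paper computes $S_\eta K_{s,\Omega}$ off the diagonal and lets $x\to y$. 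You also justify the divergence theorem up to the lateral boundary by inner parallel domains, a step the paper takes for granted.
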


\begin{proof}
Differentiate \eqref{eq:extension-G} in $z$:
\[
 -\partial_z\mathcal G_s(x,z;y)
 =\frac{z}{2\Gamma(s)}\int_0^\infty
 \tau^{s-2}e^{-z^2/(4\tau)}p_\Omega(\tau,x,y)\,\dd\tau.
\]
Hence $\mu_z$ is positive.  If
\[
 S_\Omega(\tau,y):=\int_\Omega p_\Omega(\tau,x,y)\,\dd x,
\]
then $0\le S_\Omega\le1$ by the sub-Markov property.  A change of variables
$r=z^2/(4\tau)$ gives
\begin{equation}\label{eq:mu-mass-formula}
 \mu_z(\Omega)
 =\frac1{\Gamma(1-s)}\int_0^\infty
 r^{-s}e^{-r}
 S_\Omega\!\left(\frac{z^2}{4r},y\right)\dd r.
\end{equation}
The normalization in \eqref{eq:kappa} has been used here.  This proves the
upper bound in \eqref{eq:mu-mass}.  Since
$S_\Omega(\tau,y)\to1$ as $\tau\downarrow0$ for every interior point $y$,
dominated convergence in \eqref{eq:mu-mass-formula} yields
$\mu_z(\Omega)\to1$.

More generally, for $\varphi\in C(\overline\Omega)$,
\begin{align}
 \int_\Omega\varphi(x)\,\dd\mu_z(x)
 =\frac1{\Gamma(1-s)}\int_0^\infty
 r^{-s}e^{-r}
 \left[
 \int_\Omega
 p_\Omega\!\left(\frac{z^2}{4r},x,y\right)\varphi(x)\,\dd x
 \right]\dd r.
 \label{eq:mu-test}
\end{align}
For each fixed $r>0$, the bracket converges to $\varphi(y)$ as
$z\downarrow0$.  This is the standard small-time concentration of the
Dirichlet heat kernel at an interior point; equivalently it follows from the
killed Brownian representation and continuity of $\varphi$ at $y$.  The
bracket is bounded by $\|\varphi\|_\infty$, so dominated convergence in
\eqref{eq:mu-test} proves \eqref{eq:mu-weak}.

We next prove the lateral boundary data.  Since
$\mathcal G_s(\xi,z;y)=0$ for every $y\in\Omega$, one has
$\partial_{y,\eta}\mathcal G_s=0$ on $\Sigma_\Omega$.  Also
\[
 \nabla_x\mathcal G_s(\xi,z;y)
 =\partial_\nu\mathcal G_s(\xi,z;y)\nu(\xi)
 =-\mathcal P_s(\xi,z;y)\nu(\xi),
\]
whence \eqref{eq:U-lateral}.

We next identify the value of the translation-field trace at the pole without
introducing an extension of the regular part.  For $x\ne y$, the trace
formula \eqref{eq:ext-trace} and \eqref{eq:U-trace-definition} give
\[
 \mathcal U_{\eta,y}(x,0)
 =S_\eta K_{s,\Omega}(x,y).
\]
Using $K_{s,\Omega}=\Phi_s-H_{s,\Omega}$ and the fact that the Riesz kernel
$\Phi_s(x-y)$ depends only on $x-y$, we obtain
\[
 \mathcal U_{\eta,y}(x,0)
 =-S_\eta H_{s,\Omega}(x,y),\qquad x\ne y.
\]
The left-hand side is continuous at $x=y$ by
\eqref{eq:U-trace-definition}, while the right-hand side extends continuously
there because $H_{s,\Omega}$ is $C^2$ across the diagonal.  Letting $x\to y$
therefore yields
\[
 \mathcal U_{\eta,y}(y,0)
 =-\left.(\partial_{x,\eta}+\partial_{y,\eta})H_{s,\Omega}(x,y)\right|_{x=y}
 =-DR_{s,\Omega}(y)[\eta],
\]
which proves \eqref{eq:U-pole}.

It remains to prove \eqref{eq:first-variation}.  For
$0<\varepsilon<1<R$ apply the weighted form of Green's second identity to
$\mathcal U:=\mathcal U_{\eta,y}$ and
$\mathcal G:=\mathcal G_s(\cdot,\cdot;y)$ on
$\Omega\times(\varepsilon,R)$:
\begin{equation}\label{eq:second-green-truncated}
 0=\int_{\partial(\Omega\times(\varepsilon,R))}
 z^{1-2s}\bigl(
 \mathcal U\,\partial_n\mathcal G
 -\mathcal G\,\partial_n\mathcal U
 \bigr)\,\dd S.
\end{equation}
The contribution from $z=R$ tends to zero as $R\to\infty$ by
\eqref{eq:G-large} and \eqref{eq:U-large}; the polynomial factor
$R^{1-2s}$ is dominated by the exponential decay.  On the lateral
boundary $\mathcal G=0$ and
$\partial_n\mathcal G=\partial_\nu\mathcal G=-\mathcal P_s$, so the lateral
term converges absolutely to
\[
 -\int_{\Sigma_\Omega}z^{1-2s}\mathcal U\mathcal P_s\,\dd S\,\dd z.
\]

On the boundary $z=\varepsilon$, the outward normal is $-e_z$.  The first
corresponding term equals
\begin{equation}\label{eq:bottom-first}
 -\varepsilon^{1-2s}
 \int_\Omega\mathcal U(x,\varepsilon)\partial_z\mathcal G(x,\varepsilon)\,\dd x
 =\frac1{\kappa_s}
 \int_\Omega\mathcal U(x,\varepsilon)\,\dd\mu_\varepsilon(x).
\end{equation}
The dependence of the test function on $\varepsilon$ is handled as follows:
\begin{align*}
 &\left|
 \int\mathcal U(x,\varepsilon)\,\dd\mu_\varepsilon
 -\mathcal U(y,0)
 \right|\\
 &\quad\le
 \|\mathcal U(\cdot,\varepsilon)-\mathcal U(\cdot,0)\|_\infty
 \mu_\varepsilon(\Omega)
 +\left|
 \int\mathcal U(x,0)\,\dd\mu_\varepsilon-
 \mathcal U(y,0)
 \right|.
\end{align*}
The first term is $O(\varepsilon^2)$ by
\eqref{eq:U-trace-rate} and \eqref{eq:mu-mass}; the second tends to zero by
\eqref{eq:mu-weak}.  Thus \eqref{eq:bottom-first} tends to
$\kappa_s^{-1}\mathcal U(y,0)$.

The second term on $z=\varepsilon$ is
\[
 \varepsilon^{1-2s}
 \int_\Omega\mathcal G(x,\varepsilon)\partial_z\mathcal U(x,\varepsilon)
 \,\dd x.
\]
By \cref{lem:extension-bounds},
$|\partial_z\mathcal U|\le C\varepsilon$.  Moreover
$e^{-\varepsilon^2/(4\tau)}\le1$ and the heat-kernel domination
$p_\Omega\le g_\tau$ imply
\[
 0\le\mathcal G_s(x,\varepsilon;y)
 \le K_{s,\Omega}(x,y)
 \le \Phi_s(x-y),
\]
and $\Phi_s(\cdot-y)\in L^1(\Omega)$ because $N>2s$.  Therefore the second
term on $z=\varepsilon$ is $O(\varepsilon^{2-2s})$ and tends to zero.

Letting first $R\to\infty$ and then $\varepsilon\downarrow0$ in
\eqref{eq:second-green-truncated} gives
\[
 -\int_{\Sigma_\Omega}z^{1-2s}\mathcal U\mathcal P_s\,\dd S\,\dd z
 +\frac1{\kappa_s}\mathcal U(y,0)=0.
\]
Using \eqref{eq:U-lateral} and \eqref{eq:U-pole} yields
\eqref{eq:first-variation}.
\end{proof}

\subsection{The weighted energy identity}

\begin{lemma}
\label{lem:energy-green}
For every $y\in K\Subset\Omega$ and $\eta\in\R^N$,
\begin{equation}\label{eq:energy-green}
 \boxed{
 \int_{\Sigma_\Omega}z^{1-2s}
 \mathcal U_{\eta,y}\,\partial_\nu\mathcal U_{\eta,y}
 \,\dd S\,\dd z
 =
 \int_{\mathcal C_\Omega}z^{1-2s}
 |\nabla_{x,z}\mathcal U_{\eta,y}|^2\,\dd x\,\dd z.}
\end{equation}
\end{lemma}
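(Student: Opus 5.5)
The plan is to apply Green's first identity to $\mathcal U:=\mathcal U_{\eta,y}$ on the truncated cylinder $\Omega\times(\varepsilon,R)$, $0<\varepsilon<1<R$, and then let $R\to\infty$ and $\varepsilon\downarrow0$. This is the same scheme as in the proof of \cref{lem:first-green}. Since $\mathcal U$ satisfies the weighted equation \eqref{eq:U-weighted-harmonic-early}, multiplying by $\mathcal U$ and integrating by parts should give
\[
 \int_{\Omega\times(\varepsilon,R)} z^{1-2s}|\nabla_{x,z}\mathcal U|^2\,\dd x\,\dd z
 =\int_{\partial(\Omega\times(\varepsilon,R))} z^{1-2s}\,\mathcal U\,\partial_n\mathcal U\,\dd S.
\]
The boundary on the right consists of three pieces: the lateral part $\partial\Omega\times(\varepsilon,R)$, where $\partial_n=\partial_\nu$; the top $z=R$, with outward normal $e_z$; and the bottom $z=\varepsilon$, with outward normal $-e_z$.

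To justify the integration by parts, I will use \cref{lem:extension-bounds}(iv). It gives $\mathcal U\in C^\infty$ in the interior, and on the slab $\overline\Omega\times[\varepsilon,R]$ it gives continuity of $\mathcal U$ and $\nabla_x\mathcal U$ up to the lateral boundary. Since the weight $z^{1-2s}$ is smooth and bounded above and below on the slab, the divergence theorem applies on a $C^{2,\vartheta}$ cylinder. One technical point is that $\mathcal U$ is only $C^1$ up to $\Sigma_\Omega$, so second derivatives are not controlled there. I would handle this by first integrating over the inner-parallel domains $\Omega_\delta=\{x:\dist(x,\partial\Omega)>\delta\}\times(\varepsilon,R)$, which are $C^{1}$ for small $\delta$, and letting $\delta\downarrow0$. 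The volume integrals converge by monotone convergence. The lateral integrals converge by uniform continuity of $\mathcal U$ and $\nabla_x\mathcal U$ on the slab, carried by the normal projection.

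Next I pass to the limits.
\begin{itemize}
\item \textbf{Top.} The term at $z=R$ is bounded by $CR^{1-2s}e^{-2cR}|\Omega|$ by \eqref{eq:U-large}, so it vanishes as $R\to\infty$.
\item \textbf{Bottom.} The term at $z=\varepsilon$ equals $-\varepsilon^{1-2s}\int_\Omega\mathcal U\,\partial_z\mathcal U\,\dd x$. It is bounded by $C|\Omega|\,\|z^{1-2s}\partial_z\mathcal U(\cdot,\varepsilon)\|_\infty$, using the uniform bound on $\mathcal U$ from \eqref{eq:U-small}, and it tends to zero by \eqref{eq:U-bottom-flux-zero}.
\item \textbf{Volume.} The volume integral converges to the full weighted energy by monotone convergence. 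It is finite by \eqref{eq:U-energy}.
\item \textbf{Lateral.} On $\Sigma_\Omega$, the integrand $z^{1-2s}\mathcal U\partial_\nu\mathcal U$ is dominated by $Cz^{1-2s}$ for $z\le1$, which is integrable since $s<1$, and by $Ce^{-2cz}$ for $z\ge1$. Both bounds come from \eqref{eq:U-small} and \eqref{eq:U-large}, which hold for all $x\in\overline\Omega$. Dominated convergence then shows the lateral term converges absolutely to the left-hand side of \eqref{eq:energy-green}.
\end{itemize}

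I expect the main obstacle to be the lateral-boundary justification in the first step. The interior regularity and the decay estimates already do the rest, so the only delicate point is the approximation by $\Omega_\delta$ needed because second derivatives of $\mathcal U$ are not controlled up to $\Sigma_\Omega$.
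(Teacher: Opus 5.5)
Your proposal is correct and is essentially the paper's proof: Green's first identity on $\Omega\times(\varepsilon,R)$, then four limits. Exponential decay from \eqref{eq:U-large} removes the top face. The bound $|\partial_z\mathcal U_{\eta,y}|\le Cz$ from \eqref{eq:U-small}, equivalently \eqref{eq:U-bottom-flux-zero}, removes the bottom face. Monotone convergence with \eqref{eq:U-energy} handles the volume term, and dominated convergence handles the lateral term. Your approximation by inner parallel domains only makes explicit what the paper calls ``classical integration by parts, valid since $\varepsilon>0$''; this extra care is needed because \cref{lem:extension-bounds}(iv) gives only $C^1$ control of $\mathcal U_{\eta,y}$ up to $\Sigma_\Omega$.
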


\begin{proof}
By \eqref{eq:U-weighted-harmonic-early}, the translation field is weighted
harmonic in $\mathcal C_\Omega$.  The estimate \eqref{eq:U-bottom-flux-zero} gives
$z^{1-2s}\partial_z\mathcal U(\cdot,z)\to0$ uniformly as $z\downarrow0$.  Multiply \eqref{eq:U-weighted-harmonic-early}
by $\mathcal U$ and integrate over
$\Omega\times(\varepsilon,R)$.  Classical integration by parts, valid since
$\varepsilon>0$, gives
\begin{align}
 \int_{\Omega\times(\varepsilon,R)}
 z^{1-2s}|\nabla\mathcal U|^2
 ={}&\int_{\partial\Omega\times(\varepsilon,R)}
 z^{1-2s}\mathcal U\partial_\nu\mathcal U\,\dd S\,\dd z\notag\\
 &+R^{1-2s}\int_\Omega
 \mathcal U(x,R)\partial_z\mathcal U(x,R)\,\dd x\notag\\
 &-\varepsilon^{1-2s}\int_\Omega
 \mathcal U(x,\varepsilon)\partial_z\mathcal U(x,\varepsilon)\,\dd x.
 \label{eq:energy-truncated}
\end{align}
The term on $z=R$ tends to zero exponentially.  The term on $z=\varepsilon$ is bounded by
$C\varepsilon^{2-2s}|\Omega|$ by \eqref{eq:U-small}, hence tends to zero
because $s<1$.  Finally, \eqref{eq:U-energy} and monotone convergence of the
nonnegative interior integrand allow $\varepsilon\downarrow0$ and
$R\to\infty$.  The lateral integral converges absolutely by
\cref{lem:extension-bounds}.  This proves \eqref{eq:energy-green}.
\end{proof}

\subsection{Pole derivative of the lateral Poisson kernel and boundary curvature}

\begin{lemma}
\label{lem:boundary-hessian}
For $y\in K\Subset\Omega$, $\eta\in\R^N$, and
$(\xi,z)\in\Sigma_\Omega$,
\begin{equation}\label{eq:P-y-derivative}
 \boxed{
 \partial_{y,\eta}\mathcal P_s
 =-\partial_\nu\mathcal U_{\eta,y}
 -\nabla_{\partial\Omega}\mathcal P_s\cdot\eta_T
 +aH\mathcal P_s.}
\end{equation}
Moreover,
\begin{equation}\label{eq:div-geometry}
 \operatorname{div}_{\partial\Omega}(a\eta_T)
 =\II(\eta_T,\eta_T)-Ha^2.
\end{equation}
All terms in \eqref{eq:P-y-derivative} are integrable with the weight
$z^{1-2s}$ in the combinations used below.
\end{lemma}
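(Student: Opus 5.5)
The plan is to work directly from the heat representation \eqref{eq:P-heat} and the lateral boundary condition \eqref{eq:ext-lat}. Both identities are then consequences of elementary boundary differential geometry, once we know that two $x$-derivatives of $\mathcal G_s$ have continuous traces on $\Sigma_\Omega$. That regularity is supplied by \cref{lem:boundary-heat} and \cref{lem:extension-bounds}.

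\emph{Step 1 (reduction to Hessian terms).} Since $\mathcal P_s=-\nu\cdot\nabla_x\mathcal G_s$ and $\partial_{y,\eta}\mathcal G_s=\mathcal U_{\eta,y}-\partial_{x,\eta}\mathcal G_s$, we get
\[
 \partial_{y,\eta}\mathcal P_s=-\partial_\nu\mathcal U_{\eta,y}+D_x^2\mathcal G_s[\nu,\eta]
 =-\partial_\nu\mathcal U_{\eta,y}+a\,D_x^2\mathcal G_s[\nu,\nu]+D_x^2\mathcal G_s[\nu,\eta_T].
\]
Interchanging $\partial_{y,\eta}$ with the boundary trace is justified by differentiating under the $\tau$-integral, using the bounds of \cref{lem:boundary-heat} with $|\mu|=1$, $|\beta|\le2$. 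Those bounds also give continuity of $D_x^2\mathcal G_s$ up to $\Sigma_\Omega$ for $z>0$.

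\emph{Step 2 (tangential--normal term).} Because $\mathcal G_s(\cdot,z;y)=0$ on $\partial\Omega$, we have $\nabla_x\mathcal G_s=-\mathcal P_s\nu$ there. We differentiate this identity along the tangent vector $\eta_T$:
\[
 D_x^2\mathcal G_s[\eta_T,\cdot]=-(\nabla_{\partial\Omega}\mathcal P_s\cdot\eta_T)\nu-\mathcal P_sD_{\eta_T}\nu .
\]
Since $D_{\eta_T}\nu$ is tangent ($|\nu|=1$), pairing with $\nu$ gives $D_x^2\mathcal G_s[\nu,\eta_T]=-\nabla_{\partial\Omega}\mathcal P_s\cdot\eta_T$. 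This uses only $\nu\in C^{1,\vartheta}$.

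\emph{Step 3 (normal--normal term).} On the level set $\{\mathcal G_s=0\}=\partial\Omega$ we use the standard decomposition $\Delta_x\mathcal G_s=\partial_\nu^2\mathcal G_s+H_{\partial\Omega}\partial_\nu\mathcal G_s+\Delta_{\partial\Omega}\mathcal G_s$, with the sign convention \eqref{eq:II-conv} (checked on a ball: $u_{rr}+\frac{N-1}{r}u_r$). The last term vanishes because $\mathcal G_s=0$ on $\partial\Omega$. It then remains to show $\Delta_x\mathcal G_s=0$ on $\Sigma_\Omega$. I would take this from the heat representation rather than from the extension equation:
\[
 \Delta_x\mathcal G_s(\xi,z;y)=\frac1{\Gamma(s)}\int_0^\infty\tau^{s-1}e^{-z^2/(4\tau)}\,\partial_\tau p_\Omega(\tau,\xi,y)\,\dd\tau .
\]
Here $\partial_\tau p_\Omega=\Delta_xp_\Omega$ extends continuously to the lateral boundary by \cref{lem:boundary-heat}, and it vanishes there because $p_\Omega(\tau,\xi,y)=0$ for all $\tau$. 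Hence $D_x^2\mathcal G_s[\nu,\nu]=-H\,\partial_\nu\mathcal G_s=H\mathcal P_s$. Substituting the results of Steps 2 and 3 into Step 1 gives \eqref{eq:P-y-derivative}. I expect this step to be the main obstacle. The delicate point is the continuity of second derivatives up to $\partial\Omega$ and the identification of $\partial_\tau p_\Omega$ with $\Delta_xp_\Omega$ at boundary points, which should follow from the $C^{2,\vartheta}$ parabolic Schauder regularity invoked for \cref{lem:boundary-heat}.

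\emph{Step 4 (divergence identity and integrability).} Since $\eta$ is constant, $\nabla_{\partial\Omega}a\cdot X=\langle D_X\nu,\eta\rangle=\II(X,\eta_T)$ for tangent $X$, so $\nabla_{\partial\Omega}a\cdot\eta_T=\II(\eta_T,\eta_T)$. Moreover $\operatorname{div}_{\partial\Omega}\eta=0$ and $\operatorname{div}_{\partial\Omega}(a\nu)=aH$, which give $\operatorname{div}_{\partial\Omega}\eta_T=-aH$. Combining, $\operatorname{div}_{\partial\Omega}(a\eta_T)=\II(\eta_T,\eta_T)-Ha^2$, which is \eqref{eq:div-geometry}. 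Finally, weighted integrability of each term in \eqref{eq:P-y-derivative}, multiplied by $\mathcal P_s$ or $\mathcal U_{\eta,y}$, follows from \eqref{eq:P-small}, \eqref{eq:U-small} and \eqref{eq:U-large}. These give boundedness for $z\le1$ and exponential decay for $z\ge1$, and $z^{1-2s}$ is integrable near $0$ because $s<1$. For $\partial_\nu\mathcal U_{\eta,y}$ itself we use the identity just proved, which expresses it through these controlled quantities.
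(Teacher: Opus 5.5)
Your proposal is correct and follows the paper's proof essentially step by step: the same reduction to $D_x^2\mathcal G_s[\nu,\eta]$, the same tangential differentiation of the Dirichlet condition for the mixed term, the same boundary-Laplacian decomposition for the normal--normal term, and the same divergence computation. The only deviation is how you get $\Delta_x\mathcal G_s=0$ on $\Sigma_\Omega$: you use $\partial_\tau p_\Omega=\Delta_x p_\Omega$ under the Mellin integral, whereas the paper reads it off the expanded extension equation using $\partial_z\mathcal G_s=\partial_{zz}\mathcal G_s=0$ on $\Sigma_\Omega$. The boundary identification you flag as delicate is harmless: integrating $\Delta_x p_\Omega=\partial_\tau p_\Omega$ over $[\tau_1,\tau_2]$ and letting $x\to\xi$ by dominated convergence shows that the boundary trace of $\Delta_x p_\Omega$ vanishes for a.e.\ $\tau$, which is all the Mellin integral needs.
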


\begin{proof}
Since $\mathcal P_s=-\partial_\nu\mathcal G_s$ and the normal field is
independent of the pole $y$,
\begin{align*}
 \partial_{y,\eta}\mathcal P_s
 &=-\partial_\nu(\partial_{y,\eta}\mathcal G_s)\\
 &=-\partial_\nu\mathcal U_{\eta,y}
   +\partial_\nu(\partial_{x,\eta}\mathcal G_s)\\
 &=-\partial_\nu\mathcal U_{\eta,y}
   +D_x^2\mathcal G_s[\nu,\eta].
\end{align*}
We compute the last term on the lateral boundary.  We first justify the
finite boundary differentiations used below.  For $|\beta|\le2$,
\cref{lem:boundary-heat} and the corresponding interior estimates give, on
every fixed boundary collar,
\[
 |D_x^\beta p_\Omega(t,x,y)|
 \le
 \begin{cases}
 Ct^{-M}e^{-c/t},&0<t\le1,\\
 Ce^{-ct},&t\ge1,
 \end{cases}
\]
uniformly for $y$ in compact subsets of $\Omega$; the derivatives extend
continuously to $\partial\Omega$.  After multiplication by
$t^{s-1}e^{-z^2/(4t)}$, these bounds are integrable in $t$ for every fixed
$z>0$.  Dominated convergence in the heat representation therefore gives
\[
 D_x^\beta\mathcal G_s(x,z;y)
 =\frac1{\Gamma(s)}\int_0^\infty
 t^{s-1}e^{-z^2/(4t)}D_x^\beta p_\Omega(t,x,y)\,\dd t,
 \qquad |\beta|\le2,
\]
with continuous boundary traces.  Hence
$\mathcal G_s(\cdot,z;y)\in C^2(\overline\Omega)$ for every $z>0$, locally
uniformly for $z$ away from $0$ and $y$ in compact subsets of $\Omega$.
Thus the following boundary differentiations are justified in the classical sense
and require no regularity beyond $C^{2,\vartheta}$.

Let $\tau$ be tangent to $\partial\Omega$.  Since
$\mathcal G_s(\cdot,z;y)=0$ on $\partial\Omega$,
\[
 \nabla_x\mathcal G_s=-\mathcal P_s\nu.
\]
Tangentially differentiating
$\partial_\nu\mathcal G_s=-\mathcal P_s$ gives
\[
 -\nabla_{\partial\Omega}\mathcal P_s\cdot\tau
 =D_x^2\mathcal G_s[\tau,\nu]
 +\nabla_x\mathcal G_s\cdot D_\tau\nu.
\]
The last term vanishes because $D_\tau\nu$ is tangent while
$\nabla_x\mathcal G_s$ is normal.  Hence
\begin{equation}\label{eq:mixed-Hessian}
 D_x^2\mathcal G_s[\nu,\tau]
 =-\nabla_{\partial\Omega}\mathcal P_s\cdot\tau.
\end{equation}

For the normal--normal component, fix $z>0$.  Expanding the weighted equation
\eqref{eq:ext-PDE} gives
\begin{equation}\label{eq:weighted-expanded}
 \Delta_x\mathcal G_s
 +\partial_{zz}\mathcal G_s
 +\frac{1-2s}{z}\partial_z\mathcal G_s=0.
\end{equation}
On the lateral boundary, the function
$z\mapsto\mathcal G_s(\xi,z;y)$ is identically zero for every fixed
$\xi\in\partial\Omega$.  Thus, for every $z>0$,
\[
 \partial_z\mathcal G_s=\partial_{zz}\mathcal G_s=0
 \qquad\text{on }\Sigma_\Omega.
\]
Likewise the tangential trace is identically zero, hence
$\Delta_{\partial\Omega}\mathcal G_s=0$.  With the convention
\eqref{eq:II-conv}, the boundary Laplacian decomposition is
\[
 \Delta_x f
 =D_x^2f[\nu,\nu]+H_{\partial\Omega}\partial_\nu f
 +\Delta_{\partial\Omega}(f|_{\partial\Omega}).
\]
Applying this to \eqref{eq:weighted-expanded} on the lateral boundary yields
\[
 D_x^2\mathcal G_s[\nu,\nu]
 +H_{\partial\Omega}\partial_\nu\mathcal G_s=0,
\]
and therefore
\begin{equation}\label{eq:normal-Hessian}
 D_x^2\mathcal G_s[\nu,\nu]=H_{\partial\Omega}\mathcal P_s.
\end{equation}
Writing $\eta=\eta_T+a\nu$ and combining
\eqref{eq:mixed-Hessian}--\eqref{eq:normal-Hessian} proves
\eqref{eq:P-y-derivative}.

We also verify \eqref{eq:div-geometry}.  Since
$a=\nu\cdot\eta$, for every tangent vector $\tau$,
\[
 \tau(a)=\langle D_\tau\nu,\eta_T\rangle.
\]
The Weingarten map is self-adjoint, hence
\[
 \nabla_{\partial\Omega}a\cdot\eta_T
 =\II(\eta_T,\eta_T).
\]
Moreover, for a local orthonormal tangent frame $\{e_i\}_{i=1}^{N-1}$,
\begin{align*}
 \operatorname{div}_{\partial\Omega}\eta_T
 &=\sum_i\langle D_{e_i}(\eta-a\nu),e_i\rangle
 =-a\sum_i\langle D_{e_i}\nu,e_i\rangle
 =-aH.
\end{align*}
Consequently
\[
 \operatorname{div}_{\partial\Omega}(a\eta_T)
 =\nabla_{\partial\Omega}a\cdot\eta_T
 +a\operatorname{div}_{\partial\Omega}\eta_T
 =\II(\eta_T,\eta_T)-Ha^2.
\]
The weighted integrability of the terms follows from
\cref{lem:extension-bounds}, since $z^{1-2s}\in L^1(0,1)$ and all relevant
quantities decay exponentially for $z\to\infty$.
\end{proof}

\subsection{Proof of the fractional Hessian identity}

\begin{proof}[Proof of \cref{prop:main}]
By \cref{lem:first-green},
\begin{equation}\label{eq:grad-start}
 DR_{s,\Omega}(y)[\eta]
 =\kappa_s\int_{\Sigma_\Omega}
 z^{1-2s}a\mathcal P_s^2\,\dd S\,\dd z.
\end{equation}
The factor $a=\nu\cdot\eta$ does not depend on the pole $y$.  Choose a compact
neighborhood $K\Subset\Omega$ of the fixed pole such that
$y+t\eta\in K$ for all sufficiently small $|t|$.  We claim that
\eqref{eq:grad-start} may be differentiated once more in the direction
$\eta$.  Indeed, by \cref{lem:extension-bounds}, uniformly for the pole in
this compact set,
\[
 z^{1-2s}|\mathcal P_s\partial_{y,\eta}\mathcal P_s|
 \le
 \begin{cases}
 Cz^{1-2s},&0<z\le1,\\
 Ce^{-cz},&z\ge1,
 \end{cases}
\]
and the right-hand side is integrable because $0<s<1$.  The derivative
$\partial_{y,\eta}\mathcal P_s$ is jointly continuous for $z>0$ by its heat
representation and the same domination.  Dominated convergence therefore
yields
\begin{equation}\label{eq:Hess-first}
 D^2R_{s,\Omega}(y)[\eta,\eta]
 =2\kappa_s\int_{\Sigma_\Omega}
 z^{1-2s}a\mathcal P_s
 \partial_{y,\eta}\mathcal P_s\,\dd S\,\dd z.
\end{equation}
Insert \eqref{eq:P-y-derivative}:
\begin{align}
 D^2R_{s,\Omega}(y)[\eta,\eta]
 ={}&-2\kappa_s\int_{\Sigma_\Omega}
 z^{1-2s}a\mathcal P_s\partial_\nu\mathcal U\,\dd S\,\dd z
 \label{eq:Hess-split}\\
 &-2\kappa_s\int_{\Sigma_\Omega}
 z^{1-2s}a\mathcal P_s
 \nabla_{\partial\Omega}\mathcal P_s\cdot\eta_T\,\dd S\,\dd z\notag\\
 &+2\kappa_s\int_{\Sigma_\Omega}
 z^{1-2s}Ha^2\mathcal P_s^2\,\dd S\,\dd z.\notag
\end{align}

On the lateral boundary \eqref{eq:U-lateral} gives
$\mathcal U=-a\mathcal P_s$.  Hence the first term in
\eqref{eq:Hess-split} is
\[
 2\kappa_s\int_{\Sigma_\Omega}
 z^{1-2s}\mathcal U\partial_\nu\mathcal U\,\dd S\,\dd z
 =2\kappa_s\int_{\mathcal C_\Omega}
 z^{1-2s}|\nabla\mathcal U|^2\,\dd x\,\dd z
\]
by \cref{lem:energy-green}.

For the second term, fix $z>0$.  Since $\partial\Omega$ has no boundary,
\begin{align*}
 &-2\int_{\partial\Omega}
 a\mathcal P_s\nabla_{\partial\Omega}\mathcal P_s\cdot\eta_T\,\dd S\\
 &\qquad=-\int_{\partial\Omega}
 a\eta_T\cdot\nabla_{\partial\Omega}(\mathcal P_s^2)\,\dd S\\
 &\qquad=\int_{\partial\Omega}
 \mathcal P_s^2\operatorname{div}_{\partial\Omega}(a\eta_T)\,\dd S\\
 &\qquad=\int_{\partial\Omega}
 \bigl[\II(\eta_T,\eta_T)-Ha^2\bigr]
 \mathcal P_s^2\,\dd S,
\end{align*}
where \cref{lem:boundary-hessian} was used in the last line.  Multiplying by
$\kappa_s z^{1-2s}$ and integrating in $z$ is justified by
\cref{lem:extension-bounds}.  Combining this term with the last term in
\eqref{eq:Hess-split} gives
\[
 \kappa_s\int_{\Sigma_\Omega}z^{1-2s}
 \bigl[\II(\eta_T,\eta_T)+Ha^2\bigr]\mathcal P_s^2
 \,\dd S\,\dd z.
\]
This proves the identity \eqref{eq:main-identity}.

Assume now that $\Omega$ is convex and $\eta\ne0$.  Then
$\II\ge0$ and $H_{\partial\Omega}\ge0$, so the second term in
\eqref{eq:main-identity} is nonnegative.  It remains to show that the first
term is strictly positive.  Suppose to the contrary that
\[
 \int_{\mathcal C_\Omega}z^{1-2s}|\nabla\mathcal U|^2\,\dd x\,\dd z=0.
\]
Since the weight is strictly positive for $z>0$ and the cylinder is
connected, $\mathcal U$ is constant in $\mathcal C_\Omega$.

The linear functional $x\mapsto x\cdot\eta$ attains its maximum and minimum
on the compact convex body $\overline\Omega$ at points
$\xi_+,\xi_-\in\partial\Omega$.  Since a $C^{2,\vartheta}$ boundary is $C^1$, the tangent hyperplane and the
outward unit normal are unique at every boundary point; the supporting-plane
condition at these extrema therefore gives
\[
 \nu(\xi_+)=\frac{\eta}{|\eta|},
 \qquad
 \nu(\xi_-)=-\frac{\eta}{|\eta|}.
\]
Fix any $z_0>0$.  The positive heat-kernel representation
\eqref{eq:extension-G} gives $\mathcal G_s>0$ in
$\Omega\times(0,\infty)$.  Near $(\xi_\pm,z_0)$ the weight
$z^{1-2s}$ is smooth and strictly positive, so the extension equation is
uniformly elliptic there.  Since a $C^{2,\vartheta}$ boundary satisfies
the interior sphere condition, the classical Hopf boundary lemma applies at
$(\xi_\pm,z_0)$ and gives
\[
 \mathcal P_s(\xi_\pm,z_0;y)>0.
\]
Using \eqref{eq:U-lateral},
\[
 \mathcal U(\xi_+,z_0)
 =-|\eta|\mathcal P_s(\xi_+,z_0;y)<0,
 \qquad
 \mathcal U(\xi_-,z_0)
 =|\eta|\mathcal P_s(\xi_-,z_0;y)>0,
\]
contradicting constancy of $\mathcal U$.  Hence the weighted Dirichlet
energy is strictly positive for every $\eta\ne0$, and
\eqref{eq:Hess-positive} follows.
\end{proof}

The positivity conclusion in \eqref{eq:Hess-positive}, together with
\cref{cor:fractional-robin-center}, completes the proof of
\cref{thm:fractional-hessian-intro}.

\section{From fractional rigidity to Navier polyharmonic nondegeneracy}
\label{sec:real-order-doubling}

\cref{thm:fractional-hessian-intro} supplies global Hessian
positivity for every spectral fractional order $0<s<1$.  We now use the
continuous spectral family generated by $A=-\Delta_D$ to transfer this
fractional second-order rigidity to the integer Navier conclusion
\cref{thm:all-integer-intro}.  Accordingly, the spectral exponent in this section is treated as an
operator-theoretic parameter.  Fractional and integer orders have
different PDE boundary descriptions, but all kernels considered here are the
kernels of powers of the same positive self-adjoint operator.  For $0<s<1$,
$A^s$ is the spectral fractional Dirichlet Laplacian; for
$p\in\mathbb N$, $A^p$ has the Navier polyharmonic realization described in
\cref{rem:integer-navier-interpretation}; intermediate noninteger orders are
used only through spectral functional calculus.

The precise algebraic input is
\[
 A^{-\sigma}A^{-\sigma}=A^{-2\sigma},
\]
together with the common heat-semigroup representation of the kernels.  The transfer therefore does not identify the different PDE boundary
conditions; it transports second-order information within a single spectral
family.
Throughout the section we also use the already established joint convexity
\begin{equation}\label{eq:input-joint-convexity}
 (x,y)\longmapsto K_{\sigma,\Omega}(x,y)^{-1/(N-2\sigma)}
 \quad\text{on }\Omega\times\Omega,
\end{equation}
valid for every real $\sigma>0$ with $N>2\sigma$, together with the analytic
regularity theorem of \cref{sec:robin-transfer}.  In particular,
\cref{lem:real-order-analytic} gives
\[
 H_{\sigma,\Omega}\in C^\omega(\Omega\times\Omega),
 \qquad
 R_{\sigma,\Omega}\in C^\omega(\Omega),
\]
so on every compact tube around the diagonal all mixed derivatives required
below are uniformly bounded.  Hence no additional regularity argument is required for the intermediate
noninteger orders.  The following subsections establish the spectral transfer mechanism; the
final subsection then returns to the integer Navier case and proves
\cref{thm:all-integer-intro}.

\subsection{Convexity of the truncated square energy}

For $\sigma>0$ and $N>4\sigma$, define
\begin{equation}\label{eq:real-J}
 J_{\sigma,L}(y)
 :=\int_\Omega T_L(K_{\sigma,\Omega}(x,y))^2\,\dd x,
 \qquad
 T_L(r):=\min\{r,L\}.
\end{equation}
Whenever the truncated kernel is evaluated pointwise on the diagonal, we use its
continuous extension
\begin{equation}\label{eq:capped-kernel-diagonal}
 \widehat K_{\sigma,L}(x,y):=
 \begin{cases}
 T_L(K_{\sigma,\Omega}(x,y)),&x\ne y,\\
 L,&x=y.
 \end{cases}
\end{equation}
Indeed, \cref{lem:Ks-basic} gives
$K_{\sigma,\Omega}(x,y)\to+\infty$ as $x\to y$, so
$\widehat K_{\sigma,L}$ is continuous on $\Omega\times\Omega$.  Changing the
value on the diagonal does not affect the integral in \eqref{eq:real-J}.

\begin{proposition}
\label{prop:real-J-convex}
Let $\sigma>0$ and $N>4\sigma$.  Then for every $L>0$,
\begin{equation}\label{eq:real-J-convex}
 y\longmapsto J_{\sigma,L}(y)^{-1/(N-4\sigma)}
\end{equation}
is convex on $\Omega$.
\end{proposition}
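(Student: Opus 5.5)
The plan is to apply the Borell--Brascamp--Lieb inequality (\cref{lem:BBL-general}) in dimension $d=N$ to the integration variable $x$, using the joint convexity of \cref{thm:spectral-main} as the pointwise hypothesis.

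Fix $y_0,y_1\in\Omega$, $0<\theta<1$, and set $y_\theta=(1-\theta)y_0+\theta y_1$. For $i\in\{0,1,\theta\}$ define on $\R^N$
\[
 f_i(x):=\mathbf 1_\Omega(x)\,\widehat K_{\sigma,L}(x,y_i)^2 .
\]
Each $f_i$ is bounded by $L^2$ and supported in the bounded set $\Omega$, so it is integrable. Its integral equals $J_{\sigma,L}(y_i)$, which is positive because $K_{\sigma,\Omega}>0$ off the diagonal by \cref{lem:Ks-basic}.

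Put $q:=-1/(N-2\sigma)$. By \cref{thm:spectral-main}, for $x_0,x_1\in\Omega$ and $x_\theta=(1-\theta)x_0+\theta x_1$ we have
\[
 K_{\sigma,\Omega}(x_\theta,y_\theta)\ge M_q\bigl(K_{\sigma,\Omega}(x_0,y_0),K_{\sigma,\Omega}(x_1,y_1);\theta\bigr),
\]
with the value $+\infty$ allowed on the diagonal. Off the diagonal this is \eqref{eq:spectral-power-mean}, which holds for all points by continuity of $\Psi_{\sigma,\Omega}$. A diagonal endpoint makes the right-hand side reduce to a mean with an infinite entry. Equivalently, I would work throughout with the convex function $\Psi=K^{-1/(N-2\sigma)}$, valued in $[0,\infty)$, and read $M_q$ on $[0,\infty]$ through $\Psi$; this avoids any separate handling of the diagonal.

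Apply \eqref{eq:truncated-mean-consequence} of \cref{lem:negative-mean-truncation} with $c=K(x_\theta,y_\theta)$ and $a,b$ the endpoint kernels. Using the capped diagonal value $L$, this gives
\[
 f_\theta(x_\theta)\ge M_{q/2}\bigl(f_0(x_0),f_1(x_1);\theta\bigr).
\]
The cases where the truncation is active, or a point is diagonal, are covered by monotonicity of $T_L$ together with the inequalities $T_L(a)\le a$ and $T_L(a)\le L$ used in that lemma. If $x_0\notin\Omega$ or $x_1\notin\Omega$, then one of $f_0(x_0)$, $f_1(x_1)$ vanishes, so the right-hand side is $0$ by \eqref{eq:negative-mean-zero-convention}. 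If $x_0,x_1\in\Omega$, convexity of $\Omega$ gives $x_\theta\in\Omega$. Hence the pointwise hypothesis \eqref{eq:BBL-pointwise-general} holds on all of $\R^N$ with exponent $q/2=-1/(2(N-2\sigma))$.

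Check the BBL range: $q/2>-1/N$ is equivalent to $2(N-2\sigma)>N$, that is, $N>4\sigma$, which is exactly the hypothesis. The output exponent is
\[
 \frac{q/2}{1+Nq/2}=\frac{-1}{2(N-2\sigma)-N}=-\frac1{N-4\sigma}.
\]
Therefore
\[
 J_{\sigma,L}(y_\theta)\ge M_{-1/(N-4\sigma)}\bigl(J_{\sigma,L}(y_0),J_{\sigma,L}(y_1);\theta\bigr).
\]
Because all three values are positive, raising to the negative power $-1/(N-4\sigma)$ reverses this inequality and yields
\[
 J_{\sigma,L}(y_\theta)^{-1/(N-4\sigma)}\le(1-\theta)J_{\sigma,L}(y_0)^{-1/(N-4\sigma)}+\theta J_{\sigma,L}(y_1)^{-1/(N-4\sigma)},
\]
which is the claimed convexity.

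The main obstacle is the pointwise hypothesis at diagonal points. There $K$ is infinite and $M_q$ is defined only for finite arguments, so the convention has to be set carefully: either use the capped kernel $\widehat K_{\sigma,L}$ directly and recheck \eqref{eq:truncation-power-mean} with entries equal to $L$, or approximate by off-diagonal points using continuity of $\widehat K_{\sigma,L}$. The diagonal is also a null set in $x$ for fixed $y$. Even so, BBL requires the inequality for every pair $(x_0,x_1)$, so I would verify these cases directly rather than discard them.
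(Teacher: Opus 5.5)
Your proposal is correct and follows the paper's proof essentially step by step. Both apply BBL in dimension $N$ to $f_i=\mathbf 1_\Omega\,\widehat K_{\sigma,L}(\cdot,y_i)^2$, obtain the pointwise hypothesis from the joint convexity in \cref{thm:spectral-main} combined with \eqref{eq:truncated-mean-consequence} of \cref{lem:negative-mean-truncation}, and use the same exponent bookkeeping $q/2>-1/N\iff N>4\sigma$. The only difference is at diagonal points: the paper approximates diagonal endpoint pairs by off-diagonal ones and checks $x_t=y_t$ directly via $M_r\le L^2$, while your extended-valued reading of $M_q$ through $\Psi$, with $T_L(+\infty)=L$, covers these cases equally well.
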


\begin{proof}
Put $m:=N-2\sigma$ and
\[
 q_0:=-\frac1m<0.
\]
We first establish the truncated pointwise inequality on
$\Omega\times\Omega$.  Suppose initially that both endpoint pairs
$(x_i,y_i)$, $i=0,1$, are off the diagonal and put
$x_t=(1-t)x_0+tx_1$, $y_t=(1-t)y_0+ty_1$.  If $x_t\ne y_t$, then
\eqref{eq:input-joint-convexity}, equivalently its negative-power mean form,
gives
\begin{equation}\label{eq:real-kernel-mean}
 K_{\sigma,\Omega}(x_t,y_t)
 \ge M_{q_0}\bigl(K_{\sigma,\Omega}(x_0,y_0),
                   K_{\sigma,\Omega}(x_1,y_1);t\bigr).
\end{equation}
By \cref{lem:negative-mean-truncation}, this implies
\begin{equation}\label{eq:real-truncated-square-pointwise}
 \widehat K_{\sigma,L}(x_t,y_t)^2
 \ge
 M_r\!\left(
 \widehat K_{\sigma,L}(x_0,y_0)^2,
 \widehat K_{\sigma,L}(x_1,y_1)^2;t
 \right),
 \qquad
 r:=\frac{q_0}{2}=-\frac1{2m}.
\end{equation}
If instead $x_t=y_t$, then the left-hand side of
\eqref{eq:real-truncated-square-pointwise} equals $L^2$, whereas each
truncated endpoint value is at most $L$; every power mean lies between its two
arguments, so the right-hand side is at most $L^2$.  Thus
\eqref{eq:real-truncated-square-pointwise} also holds in this case.

If one or both endpoint pairs lie on the diagonal, approximate those endpoint
pairs by off-diagonal pairs in $\Omega\times\Omega$.  For every approximating
configuration the preceding argument applies, including the possibility that
the interpolated pair lies on the diagonal.  Passing to the limit using the
continuity of $\widehat K_{\sigma,L}$ from
\eqref{eq:capped-kernel-diagonal} proves
\eqref{eq:real-truncated-square-pointwise} for arbitrary
$x_0,x_1,y_0,y_1\in\Omega$.

We next verify the pointwise hypothesis of the Borell--Brascamp--Lieb inequality.  Fix
$y_0,y_1\in\Omega$, set $y_t=(1-t)y_0+ty_1$, and define on all of $\R^N$
\[
 f_i(x):=\mathbf1_\Omega(x)\widehat K_{\sigma,L}(x,y_i)^2,
 \qquad i=0,1,t.
\]
If $x_0,x_1\in\Omega$, convexity of $\Omega$ gives
$x_t=(1-t)x_0+tx_1\in\Omega$, and
\eqref{eq:real-truncated-square-pointwise} yields
\[
 f_t(x_t)\ge M_r(f_0(x_0),f_1(x_1);t).
\]
If at least one of $x_0,x_1$ lies outside $\Omega$, then the corresponding
endpoint value is zero.  Since $r<0$, the right-hand side is zero by
\eqref{eq:negative-mean-zero-convention}, whereas $f_t\ge0$.  Thus the BBL
pointwise hypothesis holds for every $x_0,x_1\in\R^N$.

The functions $f_i$ are integrable, because
$0\le f_i\le L^2\mathbf1_\Omega$ and $|\Omega|<\infty$, and their integrals
are exactly $J_{\sigma,L}(y_i)$.  The admissibility condition for
\cref{lem:BBL-general} in dimension $N$ is
\[
 r>-\frac1N
 \quad\Longleftrightarrow\quad
 2m>N
 \quad\Longleftrightarrow\quad
 N>4\sigma,
\]
which is precisely our hypothesis.  Hence BBL gives
\[
 J_{\sigma,L}(y_t)
 \ge M_q(J_{\sigma,L}(y_0),J_{\sigma,L}(y_1);t),
 \qquad
 q:=\frac{r}{1+Nr}
 =-\frac1{2m-N}
 =-\frac1{N-4\sigma}<0.
\]
Raising this inequality to the negative power $q$ reverses its direction and
therefore gives
\[
 J_{\sigma,L}(y_t)^q
 \le (1-t)J_{\sigma,L}(y_0)^q
    +tJ_{\sigma,L}(y_1)^q.
\]
Since $q=-1/(N-4\sigma)$, this is exactly the convexity asserted in
\eqref{eq:real-J-convex}.
\end{proof}

\subsection{Kernel factorization and the finite part}

\begin{lemma}
\label{lem:real-factorization}
Let $\sigma>0$ and $N>4\sigma$.  Then for every $x\ne y$,
\begin{align}
 \Phi_{2\sigma}(x-y)
 &=\int_{\R^N}\Phi_\sigma(x-z)\Phi_\sigma(z-y)\,\dd z,
 \label{eq:real-free-factorization}\\
 K_{2\sigma,\Omega}(x,y)
 &=\int_\Omega K_{\sigma,\Omega}(x,z)
 K_{\sigma,\Omega}(z,y)\,\dd z.
 \label{eq:real-domain-factorization}
\end{align}
Both integrals are absolutely convergent.  If
$\widetilde K_{\sigma,y}$ denotes the extension of
$K_{\sigma,\Omega}(\cdot,y)$ by zero outside $\Omega$, then
\begin{equation}\label{eq:real-finitepart-identity}
 \boxed{
 R_{2\sigma,\Omega}(y)
 =\int_{\R^N}
 \Bigl[\Phi_\sigma(z-y)^2-
       \widetilde K_{\sigma,y}(z)^2\Bigr] \,\dd z.}
\end{equation}
The integrand in \eqref{eq:real-finitepart-identity} belongs to
$L^1(\R^N)$.
\end{lemma}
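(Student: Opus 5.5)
The proof has three parts: the free Riesz convolution identity, the domain factorization from the spectral semigroup law, and the finite-part identity obtained by setting $x=y$ in the difference and justifying the limit.

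\emph{Free identity.} For \eqref{eq:real-free-factorization} I would use the heat representation $\Phi_\sigma(w)=\Gamma(\sigma)^{-1}\int_0^\infty t^{\sigma-1}g_t(w)\,\dd t$ together with Tonelli and the Chapman--Kolmogorov identity $g_t*g_u=g_{t+u}$. This gives
\[
 \int_{\R^N}\Phi_\sigma(x-z)\Phi_\sigma(z-y)\,\dd z
 =\frac1{\Gamma(\sigma)^2}\int_0^\infty\!\!\int_0^\infty t^{\sigma-1}u^{\sigma-1}g_{t+u}(x-y)\,\dd t\,\dd u .
\]
Substituting $t+u=v$, $t=v\lambda$ and using the Beta integral $B(\sigma,\sigma)=\Gamma(\sigma)^2/\Gamma(2\sigma)$ turns this into the Mellin integral for $\Phi_{2\sigma}$. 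Absolute convergence follows from positivity and from finiteness of the result, which needs $N>4\sigma$ (it is needed already for $\Phi_{2\sigma}$ to be finite).

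\emph{Domain identity.} For \eqref{eq:real-domain-factorization} I would run the same computation with $p_\Omega$, using the semigroup law $\int_\Omega p_\Omega(t,x,z)p_\Omega(u,z,y)\,\dd z=p_\Omega(t+u,x,y)$. Tonelli applies since every term is nonnegative. Absolute convergence comes from the domination $0\le K_{\sigma,\Omega}\le\Phi_\sigma$ and the free identity.

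\emph{Finite part.} Write $\widetilde K_{\sigma,w}=\Phi_\sigma(\cdot-w)-h_w$ with $h_w:=\mathbf 1_\Omega H_{\sigma,\Omega}(\cdot,w)+\mathbf 1_{\Omega^c}\Phi_\sigma(\cdot-w)$. Note $0\le h_w\le\Phi_\sigma(\cdot-w)$. Subtracting the two factorizations gives, for $x\ne y$,
\[
 H_{2\sigma,\Omega}(x,y)=\int_{\R^N}\bigl[\Phi_\sigma(x-z)\Phi_\sigma(z-y)-\widetilde K_{\sigma,x}(z)\widetilde K_{\sigma,y}(z)\bigr]\dd z .
\]
Expanding, the integrand equals $\Phi_\sigma(x-z)h_y(z)+h_x(z)\Phi_\sigma(z-y)-h_x(z)h_y(z)$.

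I would then let $x\to y$. The left side tends to $R_{2\sigma,\Omega}(y)$ by \cref{lem:real-order-analytic}. The limiting integrand is $2\Phi_\sigma(\cdot-y)h_y-h_y^2=\Phi_\sigma(\cdot-y)^2-\widetilde K_{\sigma,y}^2$, which is exactly the integrand in \eqref{eq:real-finitepart-identity}. This is also nonnegative and integrable:
\begin{itemize}
\item near $y$, $h_y$ is bounded and $\Phi_\sigma(\cdot-y)\in L^1_{\rm loc}$;
\item on $\Omega\setminus\{z\text{ near }y\}$, everything is bounded;
\item on $\Omega^c$, the integrand is $\Phi_\sigma(\cdot-y)^2\sim|z|^{2(2\sigma-N)}$, which is integrable at infinity because $N>4\sigma$.
\end{itemize}
This settles the $L^1$ claim.

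\emph{Main obstacle: justifying $x\to y$.} I expect this step to be the hardest, because the terms are not uniformly bounded near the pole and the behaviour near $\partial\Omega$ must also be controlled. My plan is generalized dominated convergence (Vitali). Each of the three terms is nonnegative and is bounded by $\Phi_\sigma(x-z)\Phi_\sigma(z-y)$. On a ball $B\Subset\Omega$ around $y$, with $x$ nearby, $h_x$ and $h_y$ are uniformly bounded by local boundedness of $H_{\sigma,\Omega}$. The terms are therefore dominated by $C[\Phi_\sigma(x-\cdot)+\Phi_\sigma(\cdot-y)]$, and these converge in $L^1(B)$ by continuity of translation. Off $B$, the factors $\Phi_\sigma(x-z)$ and $\Phi_\sigma(z-y)$ are both bounded by $C(1+|z|)^{2\sigma-N}$, whose square is integrable since $N>4\sigma$. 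Pointwise convergence for $z\ne y$ holds because $H_{\sigma,\Omega}$ is continuous off the diagonal.
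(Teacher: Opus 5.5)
Your proposal is correct and follows essentially the paper's argument. Both use the same Mellin/Beta computation for the two factorizations and the same expansion of the difference into $\Phi_\sigma H+H\Phi_\sigma-HH$ terms. Both also pass $x\to y$ in the same way: local boundedness of $H_{\sigma,\Omega}$ and $L^1_{\rm loc}$ translation continuity near the pole, and the $N>4\sigma$ decay majorant away from it. The only difference is cosmetic: you absorb the exterior contribution into $h_w$ and run one generalized dominated convergence over all of $\R^N$, whereas the paper splits off $\R^N\setminus\Omega$ and handles the singular term by an explicit translation estimate.
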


\begin{proof}
Use the free heat-kernel representation
\[
 \Phi_\sigma(x-y)
 =\frac1{\Gamma(\sigma)}\int_0^\infty
 t^{\sigma-1}g_t(x-y)\,\dd t.
\]
All kernels are nonnegative.  Tonelli and the free semigroup property give
\begin{align*}
 &\int_{\R^N}\Phi_\sigma(x-z)\Phi_\sigma(z-y)\,\dd z\\
 &\quad=\frac1{\Gamma(\sigma)^2}
 \int_0^\infty\!\int_0^\infty
 t^{\sigma-1}u^{\sigma-1}g_{t+u}(x-y)\,\dd t\,\dd u.
\end{align*}
Put $v=t+u$ and $\theta=t/v$.  Since
$\dd t\,\dd u=v\,\dd v\,\dd\theta$,
\begin{align*}
 \text{right-hand side}
 &=\frac{B(\sigma,\sigma)}{\Gamma(\sigma)^2}
 \int_0^\infty v^{2\sigma-1}g_v(x-y)\,\dd v\\
 &=\frac1{\Gamma(2\sigma)}
 \int_0^\infty v^{2\sigma-1}g_v(x-y)\,\dd v
 =\Phi_{2\sigma}(x-y).
\end{align*}
Because $N>4\sigma$, the final quantity is finite for $x\ne y$, which also
proves absolute convergence.  Replacing $g_t$ by $p_\Omega(t,\cdot,\cdot)$ and using the Dirichlet
semigroup property, the identical change of variables yields
\eqref{eq:real-domain-factorization}.

Subtracting the two factorizations gives, for $x\ne y$,
\begin{align}
 H_{2\sigma,\Omega}(x,y)
 ={}&\int_{\R^N\setminus\Omega}
 \Phi_\sigma(x-z)\Phi_\sigma(z-y)\,\dd z\notag\\
 &+\int_\Omega
 \Bigl[\Phi_\sigma(x-z)\Phi_\sigma(z-y)
 -K_{\sigma,\Omega}(x,z)K_{\sigma,\Omega}(z,y)\Bigr]\,\dd z.
 \label{eq:real-H2-split}
\end{align}
Inside $\Omega$, write $K_\sigma=\Phi_\sigma-H_\sigma$ and use symmetry:
\begin{align}
 &\Phi_\sigma(x-z)H_{\sigma,\Omega}(z,y)
 +H_{\sigma,\Omega}(x,z)\Phi_\sigma(z-y)
 -H_{\sigma,\Omega}(x,z)H_{\sigma,\Omega}(z,y).
 \label{eq:real-H2-expanded}
\end{align}
Choose $\delta>0$ with $B_{2\delta}(y)\Subset\Omega$ and let
$x\to y$ with $x\in B_{\delta/2}(y)$.  Away from $B_\delta(y)$ all factors
are smooth in the variable $x$ and admit an integrable majorant.  In
$B_\delta(y)$, the last two terms of \eqref{eq:real-H2-expanded} pass to the
limit by dominated convergence because $H_{\sigma,\Omega}$ is bounded and
\[
 \Phi_\sigma(z-y)=a_{N,\sigma}|z-y|^{2\sigma-N}
 \in L^1_{\rm loc}(\R^N).
\]
For the first term, the translated singularity is handled by translation
continuity in $L^1_{\rm loc}$: with $h=x-y$, $w=z-y$, and
$f(w)=\Phi_\sigma(w)\mathbf1_{B_{2\delta}(0)}(w)$,
\begin{align*}
 &\left|\int_{B_\delta(y)}
 [\Phi_\sigma(x-z)-\Phi_\sigma(y-z)]H_{\sigma,\Omega}(z,y)\,\dd z\right|\\
 &\qquad\le
 \|H_{\sigma,\Omega}(\cdot,y)\|_{L^\infty(B_\delta(y))}
 \|\tau_hf-f\|_{L^1(\R^N)}\longrightarrow0.
\end{align*}
Thus $x\to y$ in \eqref{eq:real-H2-split} gives
\begin{align*}
 R_{2\sigma,\Omega}(y)
 ={}&\int_{\R^N\setminus\Omega}\Phi_\sigma(z-y)^2\,\dd z\\
 &+\int_\Omega
 \Bigl[2\Phi_\sigma(z-y)H_{\sigma,\Omega}(z,y)
       -H_{\sigma,\Omega}(z,y)^2\Bigr]\,\dd z,
\end{align*}
which is exactly \eqref{eq:real-finitepart-identity}.

Finally, near $z=y$ the interior integrand is bounded in absolute value by
$C(|z-y|^{2\sigma-N}+1)$, whose radial integral is
$C\int_0^1(r^{2\sigma-1}+r^{N-1})\,\dd r<\infty$.  Outside $\Omega$ there
is no pole and, at infinity,
\[
 \Phi_\sigma(z-y)^2=O(|z|^{4\sigma-2N}),
\]
whose radial integral behaves as
$\int^\infty r^{4\sigma-N-1}\,\dd r$ and converges precisely when
$N>4\sigma$.  This proves absolute integrability.
\end{proof}

\subsection{Asymptotics of high superlevel sets}

We use the following convention for differentiable remainders.  Let
$K\Subset\Omega$, let $k\in\mathbb N_0$, and let $\eta_\lambda>0$ be a
scalar scale as $\lambda\to\infty$.  We write
\begin{equation}\label{eq:Ck-big-O-convention}
 f_\lambda=O_{C^k(K)}(\eta_\lambda)
\end{equation}
if $f_\lambda$ is $C^k$ on a neighborhood of $K$ for all sufficiently
large $\lambda$ and there is a constant $C_K$, independent of $\lambda$, such that
\[
 \max_{|\mu|\le k}\sup_{y\in K}|D_y^\mu f_\lambda(y)|
 \le C_K\eta_\lambda.
\]
We write $f_\lambda=o_{C^k(K)}(\eta_\lambda)$ if the same left-hand side,
divided by $\eta_\lambda$, tends to zero.  If the functions also depend on
an auxiliary parameter, such as $\theta\in\mathbb S^{N-1}$, the notation
means that the estimate is uniform in that parameter.  We use the identical
convention for families indexed by $L\to\infty$.

\begin{lemma}
\label{lem:real-high-level}
Let $\sigma>0$, $N\ge2$, and $N>4\sigma$.  Put
\begin{equation}\label{eq:real-exponents-pre}
 m:=N-2\sigma,
 \qquad
 \gamma:=\frac Nm=1+\frac{2\sigma}{m},
 \qquad
 A_{N,\sigma}:=\frac{|\mathbb S^{N-1}|}{N}
 a_{N,\sigma}^{N/m}.
\end{equation}
For $y\in\Omega$ and $\lambda>0$, let
\begin{equation}\label{eq:real-V}
 V_y(\lambda):=
 |\{x\in\Omega:K_{\sigma,\Omega}(x,y)>\lambda\}|.
\end{equation}
Then for every $K\Subset\Omega$, as $\lambda\to\infty$,
\begin{equation}\label{eq:real-V-shifted}
 V_y(\lambda)
 =A_{N,\sigma}
  (\lambda+R_{\sigma,\Omega}(y))^{-\gamma}
 +O_{C^2(K)}
 \left(\lambda^{-\gamma-1-1/m}\right),
\end{equation}
uniformly for $y\in K$.  Consequently,
\begin{equation}\label{eq:real-V-expanded}
 V_y(\lambda)
 =A_{N,\sigma}\lambda^{-\gamma}
 -A_{N,\sigma}\gamma R_{\sigma,\Omega}(y)
  \lambda^{-\gamma-1}
 +O_{C^2(K)}
 \left(\lambda^{-\gamma-1-1/m}\right).
\end{equation}
\end{lemma}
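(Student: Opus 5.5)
The plan is to pass to polar coordinates around the pole and describe the superlevel set as a star-shaped region whose boundary is an analytic perturbation of a sphere of radius $\rho\sim\lambda^{-1/m}$.

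\textbf{Setup in polar form.} Fix $K\Subset\Omega$ and choose $\delta>0$ with $\dist(K,\partial\Omega)>3\delta$. By \cref{lem:Ks-basic}, $K_{\sigma,\Omega}(\cdot,y)$ is bounded on $\Omega\setminus B_\delta(y)$ by a constant $\Lambda_K$, uniformly for $y\in K$ (use $K_{\sigma,\Omega}\le\Phi_\sigma$). Hence for $\lambda>\Lambda_K$ the superlevel set lies in $B_\delta(y)$. There we write $x=y+\rho\theta$, $\theta\in\mathbb S^{N-1}$, and
\[
 K_{\sigma,\Omega}(y+\rho\theta,y)=a_{N,\sigma}\rho^{-m}-H_{\sigma,\Omega}(y+\rho\theta,y).
\]
By \cref{lem:real-order-analytic}, $H$ is real analytic with uniform $C^3$ bounds on the tube $\{(x,y):y\in K,\ |x-y|\le2\delta\}$.

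\textbf{Radial graph.} Substitute $\rho=\epsilon\, r$ with $\epsilon:=(\lambda+R_{\sigma,\Omega}(y))^{-1/m}$, or equivalently solve for $u:=\rho^m$. The equation $K=\lambda$ becomes
\[
 a_{N,\sigma}u^{-1}=\lambda+H_{\sigma,\Omega}(y+u^{1/m}\theta,y).
\]
Since $\partial_\rho K\approx -m a_{N,\sigma}\rho^{-m-1}<0$ dominates $\partial_\rho H=O(1)$ for small $\rho$, the function $\rho\mapsto K(y+\rho\theta,y)$ is strictly decreasing on $(0,\delta]$ once $\delta$ is small. So there is a unique radius $\rho_\lambda(y,\theta)$, and the superlevel set is exactly $\{\rho<\rho_\lambda(y,\theta)\}$, which gives
\[
 V_y(\lambda)=\frac1N\int_{\mathbb S^{N-1}}\rho_\lambda(y,\theta)^N\,\dd\theta .
\]
Writing $H(y+\rho\theta,y)=R_{\sigma,\Omega}(y)+E(y,\rho,\theta)$ with $|E|\le C\rho$, the equation reads
\[
 \rho^{-m}=a_{N,\sigma}^{-1}\bigl(\lambda+R_{\sigma,\Omega}(y)+E\bigr).
\]
This yields
\[
 \rho_\lambda^N=a_{N,\sigma}^{N/m}\bigl(\lambda+R_{\sigma,\Omega}(y)\bigr)^{-\gamma}\bigl(1+O(\lambda^{-1}\rho)\bigr),
\]
and since $\rho\sim\lambda^{-1/m}$ the error is $O(\lambda^{-\gamma-1-1/m})$. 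Integrating over $\theta$ gives \eqref{eq:real-V-shifted} at $C^0$ level. Binomial expansion of $(\lambda+R)^{-\gamma}$ then gives \eqref{eq:real-V-expanded}, the quadratic term $O(\lambda^{-\gamma-2})$ being smaller than $\lambda^{-\gamma-1-1/m}$ because $m>0$ implies $1/m<\infty$. Note also $1+1/m\le 2$ iff $m\ge1$; if $m<1$, then $\lambda^{-\gamma-2}$ is the smaller term anyway, so in either case it is absorbed.

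\textbf{$C^2$ control, the main obstacle.} The remainder has to be controlled in $C^2(K)$ with respect to $y$, which requires differentiating $\rho_\lambda$ in $y$ via the implicit function theorem. I would work in the rescaled variable $r=\rho/\epsilon_0$ with $\epsilon_0=\lambda^{-1/m}$. Then $F(r,y,\theta;\epsilon_0):=a_{N,\sigma}r^{-m}-1-\epsilon_0^{m}H(y+\epsilon_0r\theta,y)=0$ is a $C^2$ (indeed analytic) perturbation of $a_{N,\sigma}r^{-m}=1$, with $\partial_rF$ bounded away from zero near $r_*=a_{N,\sigma}^{1/m}$. Set $G(y,\rho,\theta):=H(y+\rho\theta,y)$. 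Its $y$-derivatives up to order two are bounded, and $G-R_{\sigma,\Omega}(y)$ together with its $y$-derivatives up to order two is $O(\rho)$, by Taylor expansion in $\rho$ with uniform $C^3$ bounds on $H$. Implicit differentiation then shows that $r^N$ equals $a_{N,\sigma}^{N/m}(1+\epsilon_0^mR)^{-\gamma}$ plus an error $O_{C^2(K)}(\epsilon_0^{m+1})$, uniformly in $\theta$. Converting back through the factor $\epsilon_0^N=\lambda^{-\gamma}$ gives exactly \eqref{eq:real-V-shifted} in $C^2(K)$.

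One remaining point is to justify that $V_y$ is $C^2$ in $y$ at all, i.e.\ that differentiation passes under the $\theta$-integral. This follows because the formula via $\rho_\lambda$ is exact for $\lambda>\Lambda_K$ and the integrand is $C^2$ in $y$, uniformly in $\theta$.
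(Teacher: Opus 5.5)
Your argument for \eqref{eq:real-V-shifted} follows essentially the paper's route. You confine the superlevel set to $B_\delta(y)$ via $K_{\sigma,\Omega}\le\Phi_\sigma$, use radial monotonicity to get a unique level radius, and solve the level equation by a parameterized implicit function theorem. The $C^2$ control in $y$ comes from $D_y^\mu[H_{\sigma,\Omega}(y+\rho\theta,y)-R_{\sigma,\Omega}(y)]=O(\rho)$, and you finish by integrating $\rho_\lambda^N/N$ over the sphere. The only difference is the normalization. The paper scales by the $y$-dependent radius $\rho=(a_{N,\sigma}/(\lambda+R_{\sigma,\Omega}(y)))^{1/m}$, so the normalized radius satisfies $|q_\lambda-1|+|D_yq_\lambda|+|D_y^2q_\lambda|\le C\rho/(\lambda+R_{\sigma,\Omega})$ directly. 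You scale by $\epsilon_0=\lambda^{-1/m}$ and carry the factor $(1+\epsilon_0^mR_{\sigma,\Omega})^{-\gamma}$ explicitly. Both give \eqref{eq:real-V-shifted} in $C^2(K)$.

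The step from \eqref{eq:real-V-shifted} to \eqref{eq:real-V-expanded}, however, is justified incorrectly. Absorbing the binomial remainder $O_{C^2(K)}(\lambda^{-\gamma-2})$ into $O(\lambda^{-\gamma-1-1/m})$ requires $2\ge1+1/m$, i.e.\ $m\ge1$. Positivity of $m$ is irrelevant here. Your claim that for $m<1$ the term $\lambda^{-\gamma-2}$ is ``the smaller term anyway'' is backwards. For $m<1$ one has $1+1/m>2$, so $\lambda^{-\gamma-2}$ is the larger scale. The quadratic term $\tfrac12\gamma(\gamma+1)A_{N,\sigma}R_{\sigma,\Omega}(y)^2\lambda^{-\gamma-2}$ is nonzero, since $R_{\sigma,\Omega}>0$, and it could then not be absorbed.

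What saves the statement is exactly the pair of hypotheses you never invoked. From $N>4\sigma$ you get $m=N-2\sigma>N/2$, and $N\ge2$ then gives $m>1$, hence $1+1/m<2$. This is how the paper closes the proof, and it is where $N\ge2$ and $N>4\sigma$ enter the lemma. With that correction in place of your absorption remark, the rest of your proof goes through.
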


\begin{proof}
Fix $K\Subset\Omega$ and choose
$0<\delta<\dist(K,\partial\Omega)/4$.  Set
\[
 h(y,r,\theta):=
 H_{\sigma,\Omega}(y+r\theta,y),
 \qquad \theta\in\mathbb S^{N-1}.
\]
By \cref{lem:real-order-analytic}, all mixed derivatives needed below are
bounded on the corresponding compact tube.  Since
$h(y,0,\theta)=R_{\sigma,\Omega}(y)$ identically in $y$,
Taylor's formula in $r$ gives, for $|\mu|\le2$,
\begin{equation}\label{eq:real-h-diagonal}
 D_y^\mu\bigl[h(y,r,\theta)-R_{\sigma,\Omega}(y)\bigr]
 =O(r)
\end{equation}
uniformly in $y\in K$, $\theta\in\mathbb S^{N-1}$.

The domination $p_\Omega\le g_t$ implies
\begin{equation}\label{eq:real-K-upper}
 0<K_{\sigma,\Omega}(x,y)
 \le\Phi_\sigma(x-y)=a_{N,\sigma}|x-y|^{-m}.
\end{equation}
Thus, for all sufficiently large $\lambda$, every superlevel set in
\eqref{eq:real-V} lies in $B_\delta(y)$, uniformly for $y\in K$.  After
shrinking $\delta$ if necessary,
\[
 \partial_rK_{\sigma,\Omega}(y+r\theta,y)
 =-m a_{N,\sigma}r^{-m-1}-\partial_rh(y,r,\theta)<0
\]
for $0<r<\delta$.  Hence the superlevel set is radially star-shaped about
$y$ and its boundary is uniquely represented by
\begin{equation}\label{eq:real-level-equation}
 K_{\sigma,\Omega}(y+r_\lambda(y,\theta)\theta,y)=\lambda.
\end{equation}

Put
\begin{equation}\label{eq:real-rho-q}
 S:=\lambda+R_{\sigma,\Omega}(y),
 \qquad
 \rho:=\left(\frac{a_{N,\sigma}}S\right)^{1/m},
 \qquad
 r_\lambda=\rho q_\lambda,
\end{equation}
and
\[
 g(y,r,\theta):=h(y,r,\theta)-R_{\sigma,\Omega}(y).
\]
The level equation becomes
\begin{equation}\label{eq:real-F-equation}
 \mathcal F(q,y,\theta,\lambda)
 :=q^{-m}-1-\frac{g(y,\rho q,\theta)}S=0.
\end{equation}
The uniform boundedness of $h$ first implies $q_\lambda\to1$ uniformly; hence
$q_\lambda\in[1/2,2]$ for large $\lambda$.  By
\eqref{eq:real-h-diagonal}, $g=O(\rho)$ on this interval, so
\begin{equation}\label{eq:real-q-zero}
 q_\lambda-1=O(\rho/S).
\end{equation}
Moreover,
\[
 \partial_q\mathcal F
 =-mq^{-m-1}-\frac\rho S g_r(y,\rho q,\theta)
\]
is bounded away from zero uniformly for large $\lambda$.  Hence the
parameterized implicit-function theorem applies uniformly on
$K\times\mathbb S^{N-1}$: for all sufficiently large $\lambda$, the unique
radial solution of \eqref{eq:real-F-equation} defines a function
\[
 q_\lambda=q_\lambda(y,\theta)
\]
which is $C^2$ in $y$ and continuous jointly in $(y,\theta)$.  The local
implicit branches agree globally because the positive level radius in
\eqref{eq:real-level-equation} is unique.  In particular, the first and
second implicit-differentiation formulas used below are legitimate, and all
constants may be chosen uniformly in $\theta\in\mathbb S^{N-1}$.

We record the parameter derivatives because the $C^2$ remainder is essential
later.  Since $S\asymp\lambda$ and $R_{\sigma,\Omega}\in C^3(K)$,
\begin{equation}\label{eq:real-rho-derivatives}
 D_y\rho=O(\rho/S),
 \qquad
 D_y^2\rho=O(\rho/S).
\end{equation}
For fixed $q\in[1/2,2]$, the composite
$G_q(y):=g(y,\rho(y)q,\theta)$ satisfies, by
\eqref{eq:real-h-diagonal}, the chain rule, and
\eqref{eq:real-rho-derivatives},
\[
 G_q=O(\rho),
 \qquad D_yG_q=O(\rho),
 \qquad D_y^2G_q=O(\rho).
\]
Consequently, at fixed $q$,
\begin{equation}\label{eq:real-F-derivatives}
 D_y\mathcal F=O(\rho/S),
 \qquad
 D_y^2\mathcal F=O(\rho/S),
 \qquad
 D_{yq}\mathcal F=O(\rho/S),
 \qquad
 \partial_{qq}\mathcal F=O(1).
\end{equation}
The first and second implicit differentiation formulas applied to
\eqref{eq:real-F-equation} therefore yield
\begin{equation}\label{eq:real-q-C2}
 |q_\lambda-1|+|D_yq_\lambda|+|D_y^2q_\lambda|
 \le C_K\frac\rho S
 =O\left(\lambda^{-1-1/m}\right).
\end{equation}

The polar volume formula gives
\begin{align*}
 V_y(\lambda)
 &=\frac1N\int_{\mathbb S^{N-1}}r_\lambda(y,\theta)^N\,\dd\theta\\
 &=\frac{\rho^N}{N}
 \int_{\mathbb S^{N-1}}q_\lambda(y,\theta)^N\,\dd\theta.
\end{align*}
Set $e_\lambda(y,\theta):=q_\lambda(y,\theta)^N-1$.  Since
$q_\lambda\in[1/2,2]$, \eqref{eq:real-q-C2} and the first two derivatives
of $q\mapsto q^N$ give
\[
 |e_\lambda|+|D_ye_\lambda|+|D_y^2e_\lambda|
 \le C_K\frac\rho S.
\]
Writing
\[
 E_\lambda(y):=\frac{\rho^N}{N}
 \int_{\mathbb S^{N-1}}e_\lambda(y,\theta)\,\dd\theta,
\]
and using
$D_y\rho^N,D_y^2\rho^N=O(\rho^N/S)$, the product rule gives
\[
 |E_\lambda|+|D_yE_\lambda|+|D_y^2E_\lambda|
 \le C_K\frac{\rho^{N+1}}S.
\]
Therefore
\[
 V_y(\lambda)
 =\frac{|\mathbb S^{N-1}|}{N}\rho^N
 +O_{C^2(K)}\left(\frac{\rho^{N+1}}S\right),
\]
which is \eqref{eq:real-V-shifted}.  Finally, $N>4\sigma$ and $N\ge2$
imply
\[
 m=N-2\sigma>\frac N2\ge1,
\]
with strict inequality $m>1$.  Hence $1+1/m<2$, and the Taylor
expansion
\[
 (\lambda+R)^{-\gamma}
 =\lambda^{-\gamma}
 -\gamma R\lambda^{-\gamma-1}
 +O_{C^2(K)}(\lambda^{-\gamma-2})
\]
has a remainder strictly smaller than the geometric remainder in
\eqref{eq:real-V-shifted}.  This proves
\eqref{eq:real-V-expanded}.
\end{proof}

\subsection{Two-term finite-part expansion}

\begin{proposition}
\label{prop:real-finitepart-expansion}
Let $\sigma>0$, $N\ge2$, and $N>4\sigma$.  Put
\begin{equation}\label{eq:real-alpha-beta}
 m:=N-2\sigma,
 \qquad
 \alpha:=\frac{N-4\sigma}{m},
 \qquad
 \beta:=\frac{2\sigma}{m}.
\end{equation}
Then for every $K\Subset\Omega$ there exists $L_K$ such that
$J_{\sigma,L}\in C^2(K)$ for $L\ge L_K$, and
\begin{equation}\label{eq:real-J-expansion}
 \boxed{
 J_{\sigma,L}(y)
 =C_{N,\sigma}L^\alpha
 -R_{2\sigma,\Omega}(y)
 +B_{N,\sigma}R_{\sigma,\Omega}(y)L^{-\beta}
 +O_{C^2(K)}\left(L^{-\beta-1/m}\right),}
\end{equation}
where
\begin{align}
 C_{N,\sigma}
 &:=\frac{2m|\mathbb S^{N-1}|}{N(N-4\sigma)}
 a_{N,\sigma}^{N/m}>0,
 \label{eq:real-C}\\
 B_{N,\sigma}
 &:=\frac{|\mathbb S^{N-1}|}{\sigma}
 a_{N,\sigma}^{N/m}>0.
 \label{eq:real-B}
\end{align}
\end{proposition}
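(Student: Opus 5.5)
The plan is to split $J_{\sigma,L}$ into the untruncated finite part and a layer-cake correction. Write $\widetilde K=\widetilde K_{\sigma,y}$ and $\Phi=\Phi_\sigma(\cdot-y)$. Using $T_L(K)^2=K^2-(K^2-L^2)_+$, we have
\[
 J_{\sigma,L}(y)=\int_{\R^N}\Phi^2\,\dd z-\int_{\R^N}\bigl[\Phi^2-\widetilde K^2\bigr]\dd z-\int_\Omega (K^2-L^2)_+\,\dd x .
\]
The first two integrals diverge separately, so the split has to be done with $\Phi$ truncated as well:
\[
 J_{\sigma,L}=\int T_L(\Phi)^2-\int\bigl[T_L(\Phi)^2-T_L(\widetilde K)^2\bigr].
\]
The first term is explicit and independent of $y$. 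It equals $|\{\Phi>L\}|L^2+\int_{\Phi\le L}\Phi^2$, and a radial computation with $\rho_L=(a/L)^{1/m}$ gives exactly $C_{N,\sigma}L^\alpha$. Indeed, $\frac{|\mathbb S^{N-1}|}{N}a^{N/m}L^{2-N/m}\bigl(1+\frac{N}{2m-N}\bigr)$ simplifies to the constant in \eqref{eq:real-C}.

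For the second term, I would use the layer-cake identity $T_L(f)^2=\int_0^{L^2}|\{f^2>\mu\}|\,\dd\mu$. This gives
\[
 \int\bigl[T_L(\Phi)^2-T_L(\widetilde K)^2\bigr]=\int_0^L 2\lambda\bigl[V^0(\lambda)-V_y(\lambda)\bigr]\dd\lambda,
\]
where $V^0(\lambda)=A_{N,\sigma}\lambda^{-\gamma}$ is the free superlevel volume. As $L\to\infty$, this tends to $\int[\Phi^2-\widetilde K^2]=R_{2\sigma,\Omega}(y)$ by \cref{lem:real-factorization}; the integrability at $\lambda\to0$ and convergence follow from the $L^1$ statement there. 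Hence
\[
 J_{\sigma,L}=C_{N,\sigma}L^\alpha-R_{2\sigma,\Omega}(y)+\int_L^\infty 2\lambda\bigl[V^0(\lambda)-V_y(\lambda)\bigr]\dd\lambda .
\]
\cref{lem:real-high-level} gives the tail integrand as $2A\gamma R_{\sigma,\Omega}(y)\lambda^{-\gamma}+O_{C^2(K)}(\lambda^{-\gamma-1/m})$. Since $\gamma-1=\beta$, integrating from $L$ to $\infty$ yields $\frac{2A\gamma}{\beta}R_{\sigma,\Omega}L^{-\beta}+O(L^{-\beta-1/m})$. Checking the constant: $2A\gamma/\beta=\frac{2|\mathbb S^{N-1}|}{N}a^{N/m}\frac{N}{m}\frac{m}{2\sigma}=B_{N,\sigma}$, which matches \eqref{eq:real-B}.

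The main obstacle is the $C^2(K)$ control, i.e. justifying differentiation of $J_{\sigma,L}$ and of the identity above twice in $y$. The two $y$-dependent pieces are $R_{2\sigma,\Omega}$, which is analytic by \cref{lem:real-order-analytic}, and the tail $\int_L^\infty 2\lambda[V^0-V_y]\,\dd\lambda$. For $L\ge L_K$ the tail only involves $\lambda$ in the range where \cref{lem:real-high-level} gives $C^2$ bounds uniform in $\lambda$. Differentiation under the $\lambda$ integral is then justified by the integrable majorant $C\lambda^{-\gamma}$, with $\gamma>1$. Consequently $J_{\sigma,L}$ itself is $C^2$, as the sum of a constant, $-R_{2\sigma,\Omega}$, and this tail. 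This bypasses any direct differentiation of the truncated kernel across the level set $\{K=L\}$. The one delicate point is the identity $\int_0^\infty 2\lambda[V^0-V_y]\,\dd\lambda=R_{2\sigma,\Omega}$, including the behaviour at $\lambda\to0$. There $V^0$ is infinite-tailed while $V_y\le|\Omega|$, so I would handle it via Tonelli on the nonnegative difference $\Phi^2-\widetilde K^2\ge0$ (from \eqref{eq:real-K-upper}), which makes the layer-cake rearrangement legitimate.
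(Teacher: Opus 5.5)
Your proposal is correct and follows essentially the same route as the paper. The free truncated energy gives $C_{N,\sigma}L^\alpha$, and the $L\to\infty$ limit of the truncated difference identifies $-R_{2\sigma,\Omega}$ via \cref{lem:real-factorization}; the resulting exact tail $\int_L^\infty 2\lambda[V_{\rm free}(\lambda)-V_y(\lambda)]\,\dd\lambda$ is then expanded with \cref{lem:real-high-level} and differentiated twice under the $\lambda$-integral, with the mean-value/difference-quotient step run using the level-set bounds on a slightly larger compact set $K_1\Supset K$, as the paper does.
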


\begin{proof}
Let
\[
 V_{\rm free}(\lambda)
 :=|\{x\in\R^N:\Phi_\sigma(x)>\lambda\}|.
\]
By homogeneity,
\begin{equation}\label{eq:real-V-free}
 V_{\rm free}(\lambda)
 =A_{N,\sigma}\lambda^{-\gamma},
 \qquad
 \gamma:=\frac Nm=1+\beta.
\end{equation}
Define the free truncated energy
\[
 J_{\sigma,L}^{\rm free}
 :=\int_{\R^N}\min\{\Phi_\sigma(x),L\}^2\,\dd x.
\]
Since $N>4\sigma$ is equivalent to $\gamma<2$, the layer-cake formula gives
\begin{equation}\label{eq:real-free-energy}
 J_{\sigma,L}^{\rm free}
 =2\int_0^L\lambda V_{\rm free}(\lambda)\,\dd\lambda
 =\frac{2A_{N,\sigma}}{2-\gamma}L^{2-\gamma}
 =C_{N,\sigma}L^\alpha,
\end{equation}
where $2-\gamma=\alpha$ and the last constant is exactly
\eqref{eq:real-C}.

Extend $K_{\sigma,\Omega}(\cdot,y)$ by zero to $\R^N$ and denote it by
$\widetilde K_{\sigma,y}$.  The heat-kernel domination gives
$0\le\widetilde K_{\sigma,y}\le\Phi_\sigma(\cdot-y)$.  For $0\le b\le a$,
\[
 0\le\min\{a,L\}^2-\min\{b,L\}^2\le a^2-b^2.
\]
The right-hand side is integrable here by
\cref{lem:real-factorization}.  Dominated convergence in the difference of
truncated energies therefore yields
\begin{equation}\label{eq:real-finite-limit}
 J_{\sigma,L}(y)-C_{N,\sigma}L^\alpha
 \longrightarrow -R_{2\sigma,\Omega}(y).
\end{equation}

To obtain the differentiable remainder, use
\[
 \min\{u,L\}^2
 =2\int_0^L\lambda\mathbf1_{\{u>\lambda\}}\,\dd\lambda.
\]
Hence
\begin{equation}\label{eq:real-layercake}
 J_{\sigma,L}(y)=2\int_0^L\lambda V_y(\lambda)\,\dd\lambda,
 \qquad
 J_{\sigma,L}^{\rm free}=2\int_0^L
 \lambda V_{\rm free}(\lambda)\,\dd\lambda.
\end{equation}
Put
$F_L:=J_{\sigma,L}-C_{N,\sigma}L^\alpha$.  Subtracting
\eqref{eq:real-layercake} at levels $M>L$ and using
\eqref{eq:real-finite-limit} as $M\to\infty$ gives the exact tail identity
\begin{equation}\label{eq:real-tail-identity}
 F_L(y)+R_{2\sigma,\Omega}(y)
 =-\int_L^\infty
 2\lambda\,[V_y(\lambda)-V_{\rm free}(\lambda)]\,\dd\lambda.
\end{equation}
By \cref{lem:real-high-level}, uniformly in $C^2(K)$,
\begin{align}
 2\lambda[V_y(\lambda)-V_{\rm free}(\lambda)]
 ={}&-2A_{N,\sigma}\gamma R_{\sigma,\Omega}(y)
 \lambda^{-\gamma}
 +\mathcal E(y,\lambda),
 \label{eq:real-tail-integrand}\\
 \|\mathcal E(\cdot,\lambda)\|_{C^2(K)}
 &\le C_K\lambda^{-\gamma-1/m}.
 \label{eq:real-tail-error}
\end{align}
Because $\gamma=1+\beta>1$, both the leading term and the error are
integrable on $[L,\infty)$.  To justify the $C^2$ tail rigorously, choose
$K\Subset\operatorname{int}K_1\Subset\Omega$ and perform the level-radius
construction on $K_1$.  The parameterized implicit-function theorem applied
to \eqref{eq:real-F-equation} makes $D_y^\mu V_y(\lambda)$ jointly continuous
for $|\mu|\le2$ and large $\lambda$.  Equations
\eqref{eq:real-tail-integrand}--\eqref{eq:real-tail-error} yield the
integrable majorant
\[
 \sup_{y\in K_1}
 \left|D_y^\mu\bigl(2\lambda[V_y(\lambda)
 -V_{\rm free}(\lambda)]\bigr)\right|
 \le C_{K_1}\bigl(\lambda^{-\gamma}
 +\lambda^{-\gamma-1/m}\bigr),
 \qquad |\mu|\le2.
\]
Successive difference quotients in $y$, controlled by this majorant via the
mean-value theorem, permit two applications of dominated convergence in
\eqref{eq:real-tail-identity}.  Hence the tail belongs to $C^2(K)$, its
first and second derivatives with respect to $y$ are obtained by differentiation under the
$\lambda$-integral, and $J_{\sigma,L}\in C^2(K)$ for large $L$.
Termwise integration gives
\begin{align*}
 F_L+R_{2\sigma,\Omega}
 &=\frac{2A_{N,\sigma}\gamma}{\gamma-1}
 R_{\sigma,\Omega}L^{-(\gamma-1)}
 +O_{C^2(K)}
 \left(L^{-(\gamma+1/m-1)}\right)\\
 &=B_{N,\sigma}R_{\sigma,\Omega}L^{-\beta}
 +O_{C^2(K)}(L^{-\beta-1/m}).
\end{align*}
Indeed,
\[
 \frac{2A_{N,\sigma}\gamma}{\gamma-1}
 =\frac{|\mathbb S^{N-1}|}{\sigma}
 a_{N,\sigma}^{N/m}=B_{N,\sigma}.
\]
Together with the definition of $F_L$, this proves
\eqref{eq:real-J-expansion}.
\end{proof}

\subsection{An abstract Hessian transfer principle}

\begin{proposition}
\label{prop:real-Hessian-transfer-abstract}
Let $U\subset\R^N$ be open and convex and $q<0$.  Suppose that $J_L:U\to(0,\infty)$ is such that $J_L^q$ is convex for all
sufficiently large $L$, and that on every $K\Subset U$,
\begin{equation}\label{eq:real-abstract-expansion}
 J_L=A_L-F+\varepsilon_LG+o_{C^2(K)}(\varepsilon_L),
 \qquad
 A_L\to\infty,
 \qquad
 \varepsilon_L\to0,
\end{equation}
with $F,G\in C^2(U)$.  Then the following statements hold:
\begin{enumerate}[label=\textup{(\roman*)},leftmargin=2.8em]
\item $F$ is convex;
\item if $y_*$ is a critical point of $F$ and $D^2G(y_*)>0$, then
      $D^2F(y_*)>0$;
\item if $A_L^{-1}=o(\varepsilon_L)$ and $D^2G>0$ throughout $U$, then
      $D^2F>0$ throughout $U$.
\end{enumerate}
\end{proposition}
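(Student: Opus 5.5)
The plan is to convert convexity of $J_L^q$ into a one-sided matrix inequality for $D^2J_L$ and then insert the expansion \eqref{eq:real-abstract-expansion}. Fix $K\Subset U$ with nonempty interior, and a point $y\in\operatorname{int}K$. By the convention \eqref{eq:Ck-big-O-convention}, the remainder in \eqref{eq:real-abstract-expansion} is $C^2$ on a neighbourhood of $K$ for large $L$. Hence $J_L\in C^2$ there, and $J_L>0$. A convex $C^2$ function has positive semidefinite Hessian, so $D^2(J_L^q)\ge0$ near $K$. The chain rule gives
\[
 D^2(J_L^q)=qJ_L^{q-1}\Bigl[D^2J_L+(q-1)J_L^{-1}\,DJ_L\otimes DJ_L\Bigr].
\]
Since $q<0$ and $J_L>0$, the prefactor $qJ_L^{q-1}$ is negative. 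Dividing by it yields the key inequality, in the sense of quadratic forms,
\[
 D^2J_L\le(1-q)\,J_L^{-1}\,DJ_L\otimes DJ_L \qquad\text{on }K.
\]

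Next I substitute the expansion. On $K$ it gives $D^2J_L=-D^2F+\varepsilon_LD^2G+o(\varepsilon_L)$ and $DJ_L=-DF+O(\varepsilon_L)$. Since $F$ and the remainder are bounded on $K$, also $J_L=A_L+O(1)$, so $J_L^{-1}=A_L^{-1}(1+o(1))$. Evaluated on a unit vector $\eta$, the inequality becomes
\[
 D^2F[\eta,\eta]\ \ge\ \varepsilon_L D^2G[\eta,\eta]+o(\varepsilon_L)-(1-q)A_L^{-1}(1+o(1))\bigl(DF\cdot\eta+O(\varepsilon_L)\bigr)^2 .
\]
The left-hand side does not depend on $L$, so each of the three conclusions follows by letting $L\to\infty$ in the appropriate regime.

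For (i), every term on the right tends to $0$, because $\varepsilon_L\to0$, $A_L\to\infty$, and $DF$ is bounded on $K$. Hence $D^2F\ge0$ on $\operatorname{int}K$. Since $K$ is arbitrary and $F\in C^2(U)$ with $U$ convex, $F$ is convex.

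For (ii), take $y=y_*$ inside some $K$. There $DF(y_*)=0$, so the gradient term is $O(\varepsilon_L^2/A_L)=o(\varepsilon_L)$. This gives $D^2F(y_*)[\eta,\eta]\ge\varepsilon_L\bigl(D^2G(y_*)[\eta,\eta]+o(1)\bigr)$, with the $o(1)$ uniform over unit $\eta$. Because $D^2G(y_*)$ is positive definite, $D^2G(y_*)[\eta,\eta]\ge c>0$ on the unit sphere. So the right-hand side is strictly positive for one fixed large $L$, and therefore $D^2F(y_*)>0$.

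For (iii), the gradient term is bounded by $C_KA_L^{-1}$, which is $o(\varepsilon_L)$ by hypothesis. The same argument then applies at every $y\in U$: on each $K$ one has a uniform bound $D^2G\ge c_K>0$ on unit vectors, and the conclusion follows exactly as in (ii).

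The only delicate step is the first one: passing from convexity of $J_L^q$ to the pointwise Hessian inequality. This needs $J_L$ to be $C^2$ on a neighbourhood of $K$ for large $L$, which is built into the $o_{C^2(K)}$ convention, and it needs the direction of the inequality to be tracked carefully through division by the negative factor $qJ_L^{q-1}$. Everything after that is bookkeeping of orders in $\varepsilon_L$ and $A_L^{-1}$.
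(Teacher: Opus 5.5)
Your proof is correct and is essentially the paper's argument. The paper applies the second-order chain rule to the normalized convex function $\mathcal K_L=\Psi_L(A_L-J_L)$; since $\Psi_L''/\Psi_L'=(1-q)/J_L$ and $\Psi_L'>0$, this is exactly your inequality $D^2J_L\le(1-q)J_L^{-1}DJ_L\otimes DJ_L$, with the same bookkeeping ($O(\varepsilon_L^2/A_L)$ at a critical point, $O(A_L^{-1})$ elsewhere). The only differences are cosmetic: the paper gets (i) from locally uniform convergence $\mathcal K_L\to F$ and argues (ii)--(iii) by contradiction along a null direction, whereas you read all three parts directly off the single Hessian inequality.
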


\begin{proof}
Write
\[
 J_L=A_L-s_L,
 \qquad
 s_L=F-\varepsilon_LG+o_{C^2_{\rm loc}}(\varepsilon_L),
\]
and normalize the convex function $J_L^q$ by
\begin{equation}\label{eq:real-normalized-convex}
 \mathcal K_L
 :=\frac{J_L^q-A_L^q}{-qA_L^{q-1}}
 =\Psi_L(s_L),
 \qquad
 \Psi_L(r):=
 \frac{(A_L-r)^q-A_L^q}{-qA_L^{q-1}}.
\end{equation}
The denominator is positive because $q<0$.  On bounded $r$-intervals,
\begin{equation}\label{eq:real-Psi}
 \Psi_L'(r)=1+O(A_L^{-1}),
 \qquad
 \Psi_L''(r)=O(A_L^{-1}),
 \qquad
 \Psi_L(r)=r+o(1).
\end{equation}
Thus $\mathcal K_L\to F$ locally uniformly, so the locally uniform limit of
the convex functions $\mathcal K_L$ is convex.  This proves (i).

For (ii), convexity gives $D^2F(y_*)\ge0$.  If it were not positive
definite, choose $0\ne\eta$ with
$D^2F(y_*)[\eta,\eta]=0$.  Since $\nabla F(y_*)=0$ and the remainder in
\eqref{eq:real-abstract-expansion} is $o_{C^2_{\rm loc}}(\varepsilon_L)$,
\begin{align}
 D^2s_L(y_*)[\eta,\eta]
 &=-\varepsilon_LD^2G(y_*)[\eta,\eta]+o(\varepsilon_L),
 \label{eq:transfer-center-D2s}\\
 \partial_\eta s_L(y_*)
 &=-\varepsilon_L\partial_\eta G(y_*)+o(\varepsilon_L)
 =O(\varepsilon_L).
 \label{eq:transfer-center-Ds}
\end{align}
The full second-order chain rule is
\begin{equation}\label{eq:transfer-full-chain-rule}
 D^2\mathcal K_L[\eta,\eta]
 =\Psi_L'(s_L)D^2s_L[\eta,\eta]
 +\Psi_L''(s_L)(\partial_\eta s_L)^2.
\end{equation}
Using \eqref{eq:real-Psi}, \eqref{eq:transfer-center-D2s}, and
\eqref{eq:transfer-center-Ds}, we obtain at $y_*$
\begin{align*}
 D^2\mathcal K_L[\eta,\eta]
 ={}&-\varepsilon_LD^2G(y_*)[\eta,\eta]
 +o(\varepsilon_L)\\
 &+O(A_L^{-1}\varepsilon_L)
 +O(A_L^{-1}\varepsilon_L^2).
\end{align*}
Here the third term is precisely the contribution of
$(\Psi_L'(s_L)-1)D^2s_L[\eta,\eta]$.  Since $A_L^{-1}\to0$,
\[
 A_L^{-1}\varepsilon_L=o(\varepsilon_L),
 \qquad
 A_L^{-1}\varepsilon_L^2=o(\varepsilon_L),
\]
and therefore
\[
 D^2\mathcal K_L(y_*)[\eta,\eta]
 =-\varepsilon_LD^2G(y_*)[\eta,\eta]+o(\varepsilon_L)<0
\]
for large $L$.  This contradicts convexity of $\mathcal K_L$ and proves
(ii).

For (iii), fix $y\in U$ and suppose that
$D^2F(y)[\eta,\eta]=0$ for some $\eta\ne0$.  At this fixed point,
\begin{align*}
 D^2s_L(y)[\eta,\eta]
 &=-\varepsilon_LD^2G(y)[\eta,\eta]+o(\varepsilon_L),\\
 \partial_\eta s_L(y)
 &=\partial_\eta F(y)+O(\varepsilon_L)=O(1).
\end{align*}
Applying the full chain rule \eqref{eq:transfer-full-chain-rule} gives
\begin{align*}
 D^2\mathcal K_L(y)[\eta,\eta]
 ={}&-\varepsilon_LD^2G(y)[\eta,\eta]
 +o(\varepsilon_L)\\
 &+O(A_L^{-1}\varepsilon_L)+O(A_L^{-1}).
\end{align*}
The hypothesis $A_L^{-1}=o(\varepsilon_L)$ makes both displayed
$A_L^{-1}$ terms negligible compared with $\varepsilon_L$.  Hence
\[
 D^2\mathcal K_L(y)[\eta,\eta]
 =-\varepsilon_LD^2G(y)[\eta,\eta]+o(\varepsilon_L)<0
\]
for large $L$, again contradicting convexity.  Thus
$D^2F(y)>0$ for every $y\in U$.
\end{proof}

\subsection{The finite-part doubling principle}

\begin{theorem}
\label{thm:real-doubling}
Let $\sigma>0$, $N\ge2$, $N>4\sigma$, and let $\Omega$ be a bounded open
convex set.
\begin{enumerate}[label=\textup{(\roman*)},leftmargin=2.8em]
\item If $y_*$ is a critical point of $R_{2\sigma,\Omega}$ and
\begin{equation}\label{eq:real-center-input}
 D^2R_{\sigma,\Omega}(y_*)>0,
\end{equation}
then
\begin{equation}\label{eq:real-center-output}
 D^2R_{2\sigma,\Omega}(y_*)>0.
\end{equation}
\item If
\begin{equation}\label{eq:real-global-input}
 D^2R_{\sigma,\Omega}(y)>0
 \qquad\text{for every }y\in\Omega
\end{equation}
and $N>6\sigma$, then
\begin{equation}\label{eq:real-global-output}
 D^2R_{2\sigma,\Omega}(y)>0
 \qquad\text{for every }y\in\Omega.
\end{equation}
\end{enumerate}
\end{theorem}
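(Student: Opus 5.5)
The plan is to apply the abstract transfer principle, \cref{prop:real-Hessian-transfer-abstract}, to the truncated square energies $J_L:=J_{\sigma,L}$ on $U=\Omega$. We take $q:=-1/(N-4\sigma)<0$. Then $J_L^q$ is convex for every $L>0$ by \cref{prop:real-J-convex}, which requires exactly $N>4\sigma$. The functions $J_L$ are positive because $K_{\sigma,\Omega}>0$ off the diagonal (\cref{lem:Ks-basic}). The expansion hypothesis \eqref{eq:real-abstract-expansion} comes from \cref{prop:real-finitepart-expansion}, read with
\[
 A_L:=C_{N,\sigma}L^\alpha,\qquad F:=R_{2\sigma,\Omega},\qquad \varepsilon_L:=L^{-\beta},\qquad G:=B_{N,\sigma}R_{\sigma,\Omega}.
\]
We have $\alpha=(N-4\sigma)/m>0$, so $A_L\to\infty$, and $\beta=2\sigma/m>0$, so $\varepsilon_L\to0$. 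The remainder is $O_{C^2(K)}(L^{-\beta-1/m})=o_{C^2(K)}(\varepsilon_L)$, because $1/m>0$. The regularity $F,G\in C^2(\Omega)$ is supplied by \cref{lem:real-order-analytic}. Since $B_{N,\sigma}>0$, the condition $D^2G>0$ is equivalent to $D^2R_{\sigma,\Omega}>0$.

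For part (i), we apply item (ii) of \cref{prop:real-Hessian-transfer-abstract} at the critical point $y_*$ of $F=R_{2\sigma,\Omega}$. The hypothesis \eqref{eq:real-center-input} gives $D^2G(y_*)>0$, and the proposition then yields \eqref{eq:real-center-output}.

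For part (ii), we use item (iii). The only additional requirement is $A_L^{-1}=o(\varepsilon_L)$, that is, $L^{-\alpha}=o(L^{-\beta})$, which holds iff $\alpha>\beta$. This means $N-4\sigma>2\sigma$, i.e.\ $N>6\sigma$, which is exactly the extra hypothesis. Combined with \eqref{eq:real-global-input}, item (iii) gives \eqref{eq:real-global-output}.

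The main point to check is that the $C^2$ remainder and $C^2$ regularity of $J_{\sigma,L}$ hold uniformly on each compact $K$ for all large $L\ge L_K$. The transfer principle is applied pointwise, so for fixed $y$ we choose a compact neighborhood $K$ of $y$, and the locally uniform expansion suffices; \cref{prop:real-finitepart-expansion} provides exactly this. The convexity of $J_L^q$ holds on all of $\Omega$, which is what the limit argument inside the transfer principle uses. The rest is bookkeeping of exponents, together with the hypothesis $N\ge2$, which is inherited by \cref{lem:real-high-level}.
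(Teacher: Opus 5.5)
Your proposal is correct and matches the paper's proof essentially step by step. You feed the convexity of $J_{\sigma,L}^{-1/(N-4\sigma)}$ and the two-term expansion with $A_L=C_{N,\sigma}L^\alpha$, $\varepsilon_L=L^{-\beta}$, $F=R_{2\sigma,\Omega}$, $G=B_{N,\sigma}R_{\sigma,\Omega}$ into the abstract transfer principle, using item (ii) at the critical point and item (iii) under the scale separation $\alpha>\beta\iff N>6\sigma$. The extra checks you spell out (positivity of $J_L$, $C^2$ regularity of $F,G$ from analyticity, locality of the expansion on compact neighborhoods) are exactly the hypotheses the paper uses implicitly.
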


\begin{proof}
By \cref{prop:real-J-convex},
$J_{\sigma,L}^{q_\sigma}$ is convex with
\[
 q_\sigma:=-\frac1{N-4\sigma}<0.
\]
By \cref{prop:real-finitepart-expansion}, on each $K\Subset\Omega$,
\[
 J_{\sigma,L}
 =A_L-R_{2\sigma,\Omega}
 +\varepsilon_L B_{N,\sigma}R_{\sigma,\Omega}
 +o_{C^2(K)}(\varepsilon_L),
\]
where
\[
 A_L:=C_{N,\sigma}L^\alpha,
 \qquad
 \varepsilon_L:=L^{-\beta},
 \qquad
 \alpha=\frac{N-4\sigma}{N-2\sigma},
 \qquad
 \beta=\frac{2\sigma}{N-2\sigma}.
\]
The stated remainder is $o_{C^2(K)}(\varepsilon_L)$ because it is
$O_{C^2(K)}(L^{-\beta-1/m})$.  Apply
\cref{prop:real-Hessian-transfer-abstract} with
\[
 F=R_{2\sigma,\Omega},
 \qquad
 G=B_{N,\sigma}R_{\sigma,\Omega}.
\]
Since $B_{N,\sigma}>0$, assertion (i) follows immediately from part (ii) of
the abstract transfer.

For assertion (ii), it remains only to check the scale separation.  One has
\[
 \frac{A_L^{-1}}{\varepsilon_L}
 =C_{N,\sigma}^{-1}L^{-(\alpha-\beta)},
 \qquad
 \alpha-\beta
 =\frac{N-6\sigma}{N-2\sigma}.
\]
Thus $A_L^{-1}=o(\varepsilon_L)$ exactly when $N>6\sigma$.  Part (iii) of
\cref{prop:real-Hessian-transfer-abstract} gives
\eqref{eq:real-global-output}.
\end{proof}

\subsection{Completion of the Navier polyharmonic theorem}

For an integer $p\ge1$, $A^{-p}=(-\Delta_D)^{-p}$ is the inverse of the
Navier polyharmonic realization.  By
\cref{cor:navier-robin-center}, for $N>2p$ the corresponding Robin function
already has a unique critical point $y_{p,\Omega}$, its unique global
minimizer.  It remains to transfer the fractional second-order rigidity to the integer
order $p$.

\begin{proof}[Proof of \cref{thm:all-integer-intro}]
Choose an integer $k\ge1$ such that
\begin{equation}\label{eq:integer-base-order}
 \sigma_0:=\frac{p}{2^k}\in(0,1),
\end{equation}
and set
\[
 \sigma_j:=2^j\sigma_0=\frac{p}{2^{k-j}},
 \qquad j=0,1,\ldots,k.
\]
Thus
\[
 \sigma_k=p,
 \qquad
 \sigma_{k-1}=\frac p2,
\]
and the chain is
\[
 \frac{p}{2^k}\longrightarrow
 \frac{p}{2^{k-1}}\longrightarrow\cdots\longrightarrow
 \frac p4\longrightarrow\frac p2\longrightarrow p.
\]
Since $0<\sigma_0<1$, \cref{prop:main} gives the initial positivity
\begin{equation}\label{eq:integer-base-positive}
 D^2R_{\sigma_0,\Omega}(y)>0
 \qquad\text{for every }y\in\Omega.
\end{equation}

We first propagate \emph{global} positivity only up to the order $p/2$.
For $j=0,\ldots,k-2$ one has
\[
 \sigma_j\le\frac p4.
\]
The dimensional hypothesis $N>2p$ therefore implies
\begin{equation}\label{eq:integer-intermediate-dim}
 N>2p>\frac{3p}{2}\ge6\sigma_j.
\end{equation}
Thus the global-positivity statement in \cref{thm:real-doubling} applies successively to
\[
 \sigma_0\to\sigma_1\to\cdots\to\sigma_{k-1}=\frac p2,
\]
and gives
\begin{equation}\label{eq:p-half-global}
 D^2R_{p/2,\Omega}(y)>0
 \qquad\text{for every }y\in\Omega.
\end{equation}
If $k=1$ (which can occur only for $p=1$ with the present choice),
no intermediate transfer is required, and
\eqref{eq:p-half-global} is simply the base assertion
\eqref{eq:integer-base-positive}.

For the final step, apply the critical-point statement in
\cref{thm:real-doubling} with $\sigma=p/2$ and with
$y_*=y_{p,\Omega}$.  Its dimensional requirement is
\[
 N>4\frac p2=2p,
\]
which is exactly the natural hypothesis of the theorem.  Moreover,
\eqref{eq:p-half-global} gives in particular
\[
 D^2R_{p/2,\Omega}(y_{p,\Omega})>0.
\]
Since $y_{p,\Omega}$ is a critical point of $R_{p,\Omega}$, the critical-point
transfer yields \eqref{eq:integer-center-positive}.  No scale separation is
needed in this last step; this is why the nondegeneracy conclusion at the critical point requires no additional
dimensional restriction beyond $N>2p$.

Finally suppose $N>3p$.  Then for every $j=0,\ldots,k-1$,
\[
 6\sigma_j\le6\sigma_{k-1}=3p<N.
\]
Hence the global-positivity statement in \cref{thm:real-doubling} applies at every step,
including the last one $p/2\to p$.  Starting from
\eqref{eq:integer-base-positive}, induction gives
$D^2R_{p,\Omega}>0$ throughout $\Omega$, proving
\eqref{eq:integer-global-positive}.
\end{proof}

\begin{remark}
The intermediate noninteger orders $p/2^j$ are not interpreted as classical
polyharmonic operators.  They are the standard
spectral powers $A^{-\sigma}$ of the positive Dirichlet Laplacian.  Their role
is structural: the identity $A^{-\sigma}A^{-\sigma}=A^{-2\sigma}$ and the
second finite-part expansion permit a dyadic transfer through the continuous
spectral family.  The final PDE conclusion
\cref{thm:all-integer-intro} concerns the classical integer-order
Navier polyharmonic problem.
\end{remark}

\begin{remark}
The distinction between Hessian positivity at the Robin center and global
Hessian positivity is essential.
At a critical point of $R_{2\sigma}$, the normalization error in
\cref{prop:real-Hessian-transfer-abstract} is quadratic in the small first
variation and therefore automatically lower order; the doubling step needs
only $N>4\sigma$.  Away from critical points the normalization error is of
order $A_L^{-1}$ and the second finite-part scale is $L^{-\beta}$, forcing
$N>6\sigma$.  Consequently, the nondegeneracy theorem at the Robin center holds in the
natural range $N>2p$, whereas the global transfer argument requires $N>3p$.  This global threshold is not optimal even for $p=1$: Li--Liu--Ma
\cite{LiLiuMa} prove global positive definiteness of the classical Dirichlet
Robin Hessian on every bounded smooth convex domain in every dimension
$N\ge2$.  Thus the restriction $N>3p$ should be understood as a limitation
of the uniform spectral-order doubling mechanism developed here, rather than
as an optimal threshold.
\end{remark}

\begin{remark}
The boundary regularity required by the second-order mechanism is finite.
The fractional translation--curvature identity uses only a uniform
$C^{2,\vartheta}$ boundary character: the boundary heat-kernel estimate
requires two spatial derivatives, while the geometric part uses only the
normal field and its first tangential derivative, equivalently $\II$ and
$H$.  The finite-part doubling argument is entirely interior and
requires no additional boundary smoothness; this is why the final
integer-order theorem inherits the same $C^{2,\vartheta}$ assumption,
independently of $p$.  We do not claim here that $C^{2,\vartheta}$ is
optimal.  The extension formalism is standard; see Stinga--Torrea
\cite{StingaTorrea2010}.  Simultaneous-translation fields and
Brezis--Peletier-type first-variation formulas for spectral fractional Robin
functions also appear in recent work of Ortega \cite{Ortega2026}.
\end{remark}

\appendix
\section{Proof of the boundary heat-kernel derivative estimate}
\label{app:boundary-heat}

This appendix contains the proof of \cref{lem:boundary-heat}.  We separate
it from the main fractional Hessian argument because its role is purely
regularity-theoretic: it provides the two boundary derivatives required to
justify the translation--curvature identity under the boundary regularity
assumption $C^{2,\vartheta}$.

\begin{proof}[Proof of \cref{lem:boundary-heat}]
Since $\partial\Omega$ is compact and of class $C^{2,\vartheta}$, there are
$r_\partial>0$ and $\Lambda_\partial<\infty$ such that, after a rigid motion
at each $\xi\in\partial\Omega$,
\[
 \Omega\cap\bigl(B'_{4r_\partial}\times(-4r_\partial,4r_\partial)\bigr)
 =\{(x',x_N):x'\in B'_{4r_\partial},
       |x_N|<4r_\partial,\ x_N>\varphi_\xi(x')\},
\]
with
\[
 \varphi_\xi(0)=0,\qquad D\varphi_\xi(0)=0,\qquad
 \|\varphi_\xi\|_{C^{2,\vartheta}(B'_{4r_\partial})}
 \le\Lambda_\partial
\]
uniformly in $\xi$.  Fix
\[
 0<\rho_0<\min\{d/16,r_\partial/16\}
\]
small enough that the above charts also cover the closed $4\rho_0$ boundary
collar.  If $\dist(x,\partial\Omega)<4\rho_0$ and $y\in K$, then
\[
 |x-y|\ge d-4\rho_0\ge\frac{3d}{4}.
\]
Hence the killed-kernel domination by the free heat kernel gives
\begin{equation}\label{eq:heat-zero-bound}
 0\le p_\Omega(t,x,y)
 \le Ct^{-N/2}e^{-c/t},
 \qquad0<t\le1,
\end{equation}
uniformly in this collar and in $y\in K$.

We first differentiate in the pole variable.  Choose
$K\Subset K_1\Subset\Omega$ with
$\dist(K_1,\partial\Omega)>3d/4$.  For fixed $x$ in the collar,
$(t,y)\mapsto p_\Omega(t,x,y)$ is caloric in the $y$ variable.  A standard
scale-invariant interior parabolic derivative estimate on a cylinder of
spatial radius comparable to $\sqrt t$ gives, for $|\mu|\le1$ and all
sufficiently small $t$,
\begin{equation}\label{eq:y-int-est}
 |D_y^\mu p_\Omega(t,x,y)|
 \le Ct^{-|\mu|/2}
 \sup_{\substack{t/2\le s\le3t/2\\
                  y'\in B_{c_0\sqrt t}(y)}}p_\Omega(s,x,y').
\end{equation}
After reducing the small-time threshold, all such $y'$ lie in $K_1$ and
$|x-y'|\ge d/2$.  Thus \eqref{eq:heat-zero-bound}, with modified constants,
yields
\begin{equation}\label{eq:y-der-small}
 |D_y^\mu p_\Omega(t,x,y)|
 \le Ct^{-(N+|\mu|)/2}e^{-c/t}.
\end{equation}

Fix $y\in K$ and write
\[
 v_\mu(t,x):=D_y^\mu p_\Omega(t,x,y).
\]
Then
\[
 (\partial_t-\Delta_x)v_\mu=0
 \quad\text{in }(0,\infty)\times\Omega.
\]
It also has zero lateral trace.  For $|\mu|=0$ this is the Dirichlet boundary
condition.  For $|\mu|=1$, use the semigroup identity
\[
 D_y^\mu p_\Omega(t,x,y)
 =\int_\Omega p_\Omega(t/2,x,z)
 D_y^\mu p_\Omega(t/2,z,y)\,\dd z.
\]
The second factor is bounded uniformly in $z$ because the pole $y$ stays in
$K$, while the first factor tends to zero as $x\to\partial\Omega$; domination
by the fixed-time free Gaussian permits dominated convergence.  Hence
$v_\mu(t,\xi)=0$ for every $t>0$ and $\xi\in\partial\Omega$.

We now make the boundary estimate explicit.  In a chart centered at $\xi\in\partial\Omega$, we suppress the subscript
$\xi$ and set
\[
 \Psi(z',z_N):=(z',z_N+\varphi(z')),
 \qquad
 u(t,z):=v_\mu(t,\Psi(z)).
\]
Since
\[
 \partial_{x_i}=\partial_{z_i}-\varphi_i\partial_{z_N}
 \quad(i<N),\qquad
 \partial_{x_N}=\partial_{z_N},
\]
a direct computation gives
\begin{align}
 \Delta_xv_\mu
 ={}&\sum_{i<N}u_{ii}-2\sum_{i<N}\varphi_i u_{iN}
 +(1+|D\varphi|^2)u_{NN}-(\Delta'\varphi)u_N.
 \label{eq:flattened-laplacian}
\end{align}
Thus $u$ solves
\begin{equation}\label{eq:flattened-parabolic}
 u_t-a^{ij}(z')u_{ij}-b^i(z')u_i=0
\end{equation}
in a flat half-cylinder, with $u=0$ on the flat lateral face, where
\[
 a^{ij}=\delta_{ij}\ (i,j<N),\qquad
 a^{iN}=a^{Ni}=-\varphi_i,\qquad
 a^{NN}=1+|D\varphi|^2,
\]
and
\[
 b^N=-\Delta'\varphi,\qquad b^i=0\quad(i<N).
\]
For every $\zeta=(\zeta',\zeta_N)$,
\begin{equation}\label{eq:ellipticity-identity}
 a^{ij}\zeta_i\zeta_j
 =|\zeta'-D\varphi\,\zeta_N|^2+\zeta_N^2.
\end{equation}
Consequently the ellipticity constants are uniform in the boundary chart.
Moreover
\[
 \|a\|_{C^{0,\vartheta}}+\|b\|_{C^{0,\vartheta}}
 \le C(N,\Lambda_\partial).
\]
Only $D\varphi$ and $D^2\varphi$ enter these coefficients.

Fix a small universal $\varepsilon>0$ and set $r=\varepsilon\sqrt t$.
Choose $\varepsilon$ so that $4r^2\le t/2$ and the spatial cylinder of
radius $4r$ remains inside the chart.  Denote by $\mathcal Q_{4r}^+$ the
corresponding flattened half-cylinder with time interval
$(t-4r^2,t+4r^2)$ and spatial section $B_{4r}\cap\{z_N>0\}$.  With
\[
 W(T,Z):=u(t+r^2T,rZ)
\]
the rescaled equation is
\[
 W_T-A_r^{ij}(Z)W_{ij}-B_r^i(Z)W_i=0,
 \qquad
 A_r^{ij}(Z)=a^{ij}(rZ),\quad B_r^i(Z)=r b^i(rZ).
\]
The ellipticity constants are unchanged and
\[
 \|A_r\|_{C^{0,\vartheta}}+\|B_r\|_{C^{0,\vartheta}}
 \le C(N,\vartheta,\Lambda_\partial)
\]
uniformly in $r$, because
$[A_r]_{\vartheta}=r^\vartheta[a]_{\vartheta}$ and
$[B_r]_{\vartheta}=r^{1+\vartheta}[b]_{\vartheta}$.
By the flat-boundary regularity result cited above (equivalently, by
smooth approximation followed by passage to the limit under the uniform
Schauder estimate), $W$ has the required $C^{2,\vartheta}$ spatial
regularity up to the flat lateral face on every strictly smaller
half-cylinder.  Lieberman's Theorem~4.22 then gives the scale-invariant
estimate
\[
 \sup(|D_ZW|+|D_Z^2W|)
 \le C\|W\|_{L^\infty},
\]
where $C$ depends only on the dimension, the ellipticity constants, the
$C^{0,\vartheta}$ coefficient bounds and the fixed ratio of the two
half-cylinders, and is therefore independent of $t$, $r$, $\xi$ and
$y\in K$.
Scaling back gives
\begin{equation}\label{eq:scaled-boundary-parabolic}
 |D_z^\gamma u(t,0)|
 \le Cr^{-|\gamma|}
 \sup_{\mathcal Q_{4r}^+}|u|,
 \qquad |\gamma|\le2.
\end{equation}
The inverse coordinate change satisfies
\[
 |D_xv_\mu|\le C|D_zu|,
 \qquad
 |D_x^2v_\mu|
 \le C\bigl(|D_z^2u|+\|D^2\varphi\|_\infty|D_zu|\bigr).
\]
Since $r\le1$, the lower-order $r^{-1}$ term is absorbed by $r^{-2}$.
Therefore, for $|\beta|\le2$,
\[
 |D_x^\beta v_\mu(t,\xi)|
 \le Cr^{-|\beta|}
 \sup_{\mathcal Q_{4r}^+}|v_\mu|.
\]
All points of the last cylinder have time coordinate in $[t/2,3t/2]$ and
remain in the fixed boundary collar.  Combining this estimate with
\eqref{eq:y-der-small} and $r\asymp\sqrt t$ proves
\eqref{eq:heat-small-boundary} on the boundary for all sufficiently small
$t$.  The same Schauder estimate controls every point in the smaller
half-cylinder, so it also gives the estimate when
$\dist(x,\partial\Omega)\le c\sqrt t$.  If instead
$\dist(x,\partial\Omega)>c\sqrt t$, an interior parabolic cylinder of radius
comparable to $\sqrt t$ is contained in $\Omega$ and the standard interior
derivative estimate gives the same bound.  Thus
\eqref{eq:heat-small-boundary} holds throughout the fixed collar for small
$t$.

On a compact interval $t\in[t_0,1]$, the same local boundary and interior
estimates apply with a fixed positive time scale and give uniform bounds for
all derivatives under consideration.  Increasing $C$ proves
\eqref{eq:heat-small-boundary} for all $0<t\le1$.  The local
Schauder estimate also shows that these $x$ derivatives extend continuously
to the lateral boundary.

For large time, the semigroup property gives, for $t\ge2$,
\begin{equation}\label{eq:large-semigroup}
 D_x^\beta D_y^\mu p_\Omega(t,x,y)
 =\Big\langle
 D_x^\beta p_\Omega(1,x,\cdot),
 e^{-(t-2)A}D_y^\mu p_\Omega(1,\cdot,y)
 \Big\rangle_{L^2(\Omega)}.
\end{equation}
For $s\in[1/2,3/2]$ the free-kernel bound gives
$p_\Omega(s,x,z)\le C$ uniformly in $x,z\in\Omega$.  Applying the same
boundary Schauder estimate in the $x$ variable therefore gives uniform fixed-time bounds for $D_x^\beta p_\Omega(1,x,z)$, including as the
second variable $z$ approaches the boundary.  The interior pole estimate gives the
corresponding $D_y^\mu$ bound for $y\in K$.  Since $\Omega$ is bounded,
these pointwise bounds imply
\[
 \sup_{\dist(x,\partial\Omega)\le\rho_0}
 \|D_x^\beta p_\Omega(1,x,\cdot)\|_{L^2(\Omega)}
 +\sup_{y\in K}
 \|D_y^\mu p_\Omega(1,\cdot,y)\|_{L^2(\Omega)}
 \le C.
\]
Since
\[
 \|e^{-(t-2)A}\|_{L^2\to L^2}=e^{-\lambda_1(\Omega)(t-2)},
\]
Cauchy--Schwarz proves \eqref{eq:heat-large-boundary} for $t\ge2$; the
compact interval $[1,2]$ is absorbed into the constant.  This completes the
proof.
\end{proof}

\renewcommand\refname{References}
\renewenvironment{thebibliography}[1]{%
\section*{\refname}
\list{{\arabic{enumi}}}{\def\makelabel##1{\hss{##1}}\topsep=0mm
\parsep=0mm\partopsep=0mm\itemsep=0mm
\labelsep=1ex\itemindent=0mm
\settowidth\labelwidth{\small[#1]}%
\leftmargin\labelwidth \advance\leftmargin\labelsep
\advance\leftmargin -\itemindent
\usecounter{enumi}}\small
\def\newblock{\ }
\sloppy\clubpenalty4000\widowpenalty4000
\sfcode`\.=1000\relax}{\endlist}

\end{document}